\newtheorem{theorem}{Theorem}
\newtheorem{lemma}[theorem]{Lemma}
\DeclareMathOperator{\traj}{\operatorname{traj}}
\newcommand{\deltaSNR}{\mathrm{\Delta SNR}}
\newcommand{\dB}{\mathrm{\,dB}}
\newcommand{\Pos}{\mathrm{Pos}}
\newcommand\tabrotate[1]{\begin{turn}{90}\rlap{#1}\end{turn}}
\title{
Wavelet Sparse Regularization for Manifold-Valued Data
}
\author{Martin Storath%
\thanks{Image Analysis and Learning Group, Universit\"at Heidelberg, Germany.},
Andreas Weinmann\thanks{Department of Mathematics and Natural Sciences, Hochschule Darmstadt, and Institute of Computational Biology, Helmholtz Zentrum München, Germany. 
}} 
\date{}
\newcommand{\N}{\mathbb{N}}
\def\supp{\operatorname{supp}}
\newcommand{\argmin}{\operatorname{\arg}\min}
\newcommand{\dist}{\mathrm{dist}}
\newcommand {\mean}{\mathrm{mean}}
\newcommand {\diam}{\mathrm{diam}}
\newcommand {\pt}{\mathrm{pt}}
\newcommand {\prox}{\mathrm{prox}}
\newlength\figureheight
\newlength\figurewidth
\begin{document}
\maketitle

\begin{abstract}
	In this paper, we consider the sparse regularization of manifold-valued data with respect to an interpolatory wavelet/multiscale transform. We propose and study variational models for this task and provide results on their well-posedness.  
	We present algorithms for a numerical realization of these models in the manifold setup. 
	Further, we provide experimental results to show the potential of the proposed schemes for applications. 		
\end{abstract}
{\small

\indent \textbf{Mathematical subject classification:}
94A08,	
90C90   
65T60,  
53B99,  
53C35,  
65K10.  

\indent \textbf{Keywords}: Manifold-valued Data, Sparse Regularization, Interpolatory Wavelets, Interpolatory Multiscale Transforms, Denoising, Symmetric Spaces.
}

\section{Introduction}

In various problems of applied sciences the data take values in a manifold. 
Examples are circle and sphere-valued data. 
They appear in interferometric SAR imaging \cite{massonnet1998radar}, as wind directions \cite{schultz1990circular}, and as orientations of flow fields \cite{adato2011polar, storath2017fastMedian}.
Further, in color image processing
they appear in connection with color spaces such as HSI, HCL, as well as in chromaticity based color spaces
\cite{chan2001total,vese2002numerical,kimmel2002orientation, lai2014splitting}.
Other examples are data with values in the special orthogonal group $\mathrm{SO}(3)$ which may express camera positions
or orientations of aircrafts \cite{rahman2005multiscale},
Euclidean motion group-valued data \cite{rosman2012group} representing, e.g., poses as well as shape-space data \cite{Michor07,berkels2013discrete}.
Another prominent manifold is the space of positive (definite) matrices $\Pos_n$ of dimension $n$
endowed with the Fisher-Rao metric  \cite{radhakrishna1945information}. 
For each $n \in \N,$ $\Pos_n$ is a Cartan-Hadamard manifold which has nice differential-geometric properties.
On the application side, $\Pos_3$ is the data space in diffusion tensor imaging \cite{pennec2006riemannian} 
which allows the quantification of diffusional characteristics of a specimen non-invasively \cite{basser1994mr,johansen2009diffusion}
and which is therefore helpful in the context of neurodegenerative pathologies such as schizophrenia \cite{foong2000neuropathological}  and autism 
\cite{alexander2007diffusion}. Another application of positive matrices are deformation tensors; cf. \cite{rahman2005multiscale}.

There are various directions on processing manifold valued data.
Manifold-valued partial differential equations have for instance been considered in   
\cite{tschumperle2001diffusion, chefd2004regularizing}.
In particular, finite element methods for manifold-valued data are the topic of \cite{grohs2013optimal,sander2015geodesic}.
Work on statistics on Riemannian manifolds can be found in 
\cite{oller1995intrinsic,bhattacharya2003large,fletcher2004principal,%
	bhattacharya2005large,pennec2006intrinsic,fletcher2007riemannian}.  
Optimization problems for manifold-valued data are for example the topic of \cite{absil2009optimization,absil2004riemannian, grohs2016varepsilon} 
and of \cite{hawe2013separable} with a view towards learning in manifolds. We also mention related work on optimization in Hadamard spaces \cite{bacak2014convex, bavcak2013proximal}.
Nonsmooth variational methods for segmentation and denoising 
using Potts and Mumford-Shah models for manifold-valued data were considered in \cite{weinmann2015mumford,storath2017jump}.  
TV functionals for manifold-valued data were considered in~\cite{GM06,GM07,GMS93}; 
algorithmic approaches for TV minimization were considered in ~\cite{SC11,LSKC13,grohs2016total,weinmann2014total, steidl16half_quadratic, storath2016exact}.
Examples of applications of TV regularization for medical imaging tasks 
can be found in \cite{baust2015total,stefanoiu2016joint,baust2016combined}.
Recently, TV type regularization with indirect data terms, which in particular may be used to realize deconvolution models in the manifold setting, has been considered in \cite{storath2018variational}.

Interpolatory wavelet transforms for linear space data have been investigated by D.~Donoho in \cite{donoho1992interpolating}.
Their analogues for manifold-valued data have been introduced by Ur Rahman, Donoho and their coworkers in \cite{rahman2005multiscale}. 
Such transforms have been analyzed and developed further in \cite{grohs2009interpolatory,grohs2010stability,weinmann2012interpolatory}. 
Typically, the wavelet-type transforms employ an (interpolatory) subdivision scheme
to predict the signal on a finer scale. 
The `difference' between the prediction and the actual data on the finer scale 
is realized by vectors living in the tangent spaces of the predicted signal points which point to the actual signal values; 
i.e., they yield actual signal values after application of a retraction such as the exponential map. 
These tangent vectors then serve as detail coefficients.
Subdivision schemes for manifold-valued data have been considered in  
\cite{wallner2005convergence, grohs2008smoothness,xie2008smoothness,weinmannConstrApprox, wallner2011convergence}.
Interpolatory wavelet transforms and subdivision are discussed in more detail in 
Section~\ref{sec:Preliminaries} below which deals with preliminaries. 
All the aforementioned approaches in this paragraph consider explicit schemes, i.e., the measured data is processed in a forward way using the 
analogues of averaging rules and differences in the manifold setting. 
In contrast, we here consider an implicit approach based on a variational formulation.

In this paper, we consider wavelet sparse regularization for manifold-valued data.
Let us briefly describe the univariate situation here; 
details and the multivariate setup are considered in Section~\ref{sec:model}.
Let $f \in \mathcal M^K$ be data living in the manifold $\mathcal M$ defined on a discrete grid of length $K.$ 
We consider the problem (which, for $q=1, \alpha=1$, is a variant of the LASSO \cite{tibshirani1996regression, chambolle1998nonlinear}  for manifold-valued data)
\begin{align}\label{eq:VarProblemIntro}
\argmin_{u \in \mathcal M^N }  \  \dist(\mathcal A(u),f)^p + \mathcal W_\lambda^{\alpha,q} (u).
\end{align}
Here, $u$ denotes the argument to optimize for; 
it may be thought of as the underlying signal 
generating the response $\mathcal A(u)$
where $\mathcal A $ is a (necessarily) nonlinear operator which models for instance a system's response.
In the simplest case of pure denoising, $\mathcal A$ may be implemented as the identity on $\mathcal M^N.$
Further instances of $\mathcal A$ are detailed in Section~\ref{sec:model}; 
examples are the manifold valued analogues of convolution operators.  
The nearness between $\mathcal A(u)$ 
and the data $f$ is measured by $\dist(\mathcal A(u),f)^p = \sum_{i=1}^{K}\dist(\mathcal A(u)_i,f_i)^p$
which denotes the $p$th power of the distance in $\mathcal M^K.$
The parameter vector $\lambda = (\lambda_1,\lambda_2)$ regulates the trade-off between the data fidelity, i.e., the distance to the data 
and the regularizing term $\mathcal W^{\alpha,q}.$
Let us more precisely describe the term $\mathcal W_\lambda^{\alpha,q}$ which is the central topic of the paper.
We let  
\begin{equation}
 \mathcal W_\lambda^{\alpha,q}(u) = 	
 \lambda_1 \cdot  \sum_{n,r} 2^{r q \left(\alpha+\tfrac{1}{2}-\tfrac{1}{q} \right)}\| d_{n,r}(u) \|^q_{\hat u_{n,r}} 
 + \lambda_2 \cdot \sum_n \dist(\tilde u_{n-1,0},\tilde u_{n,0})^q.
\end{equation}
Here the symbol 
$\| \cdot \|_{\hat u_{n,r}}$ denotes the norm 
induced by the Riemannian scalar product in the point 
$\hat u_{n,r},$ which is the point where the detail $d_{n,r}(u)$
is a tangent vector at.
The parameter $\alpha$ is a smoothness parameter and the parameter $q\geq 1$
stems from a norm type term. 
The details $d_{n,r}$ at scale $r$ of the interpolatory wavelet transform for manifold valued data are given by 
\begin{equation}
d_{n,r} =  d_{n,r}(u) =  2^{-r/2}\left( \tilde u_{n,r}  \ominus  \hat u_{n,r} \right),
\qquad \hat u_{n,r} = \mathrm{S} \tilde u_{n,r-1}.  
\end{equation}
Here $\tilde u_{n,r-1} = u_{2^{R-r+1}n}$  and $\tilde u_{n,r} = u_{2^{R-r}n}$  ($R$ the finest level)  
denote the thinned out target $u$ at scale $r-1$ and $r,$ respectively.
The coarsest level  is denoted by  $\tilde u_{n,0} = u_{2^{R}n}.$  
$\mathrm{S} \tilde u_{n,r-1}$ denotes the application of an interpolatory subdivision scheme $\mathrm S$ for manifold-valued data  	 
to the coarse level data $\tilde u_{\cdot,r-1}$ evaluated at the index $n$ which serves as prediction for $\tilde u_{n,r}.$
Then, $\tilde u_{n,r} \ominus \mathrm{S} \tilde u_{n,r-1} = \exp^{-1}_{\mathrm{S} \tilde u_{n,r-1}} \tilde u_{n,r}$ 
denotes the tangent vector sitting in $\hat u_{n,r} = \mathrm{S} \tilde u_{n,r-1}$ which points to $\tilde u_{n,r}.$ 
Here, we use the symbol $\exp^{-1}$ to denote the inverse of the Riemannian exponential function $\exp.$
We point out that we use the normalization of the coefficients $d_{n,r}$ employed in \cite{donoho1992interpolating} in the linear case.
Details are discussed in Section~\ref{sec:Preliminaries}.
The second term addresses the coarsest scale; it measures the $q$th power of the distance between the target   items on the coarsest scale.  
As already pointed out, the case $q=1,\alpha=1$ in \eqref{eq:VarProblemIntro}, 
corresponds to  the manifold analogue of the LASSO or $\ell^1$-sparse regularization which, in the linear case, is addressed by (iterative) soft thresholding \cite{donoho1995noising}.
This choice $q=1,\alpha=1$ is particularly interesting since it favors solutions $u$ which are sparse w.r.t.\ the considered wavelet expansion.
The manifold analogue of $\ell^0$-sparse regularization is obtained by using the regularizer
\begin{equation}
\mathcal W_\lambda^0(u) = 	\lambda_1  \ \#   \{(n,r) \ : \ d_{n,r}(u) \neq 0\}
 \ + \ \lambda_2 \ \#  \{n \ : \ \tilde u_{n-1,0} \neq \tilde u_{n,0}  \}.
\end{equation}
The operator $\#$ is used to count the number of elements in the corresponding set. 
Note that so the number of non-zero detail coefficients 
of the wavelet expansion is penalized.
Similar to the linear case 
\cite{weaver1991filtering,donoho1995noising,chambolle1998nonlinear},
potential applications of the considered sparse regularization techniques are denoising and compression.

\subsection{Contributions}

The contributions of the paper are as follows: \emph{(i)}
 we introduce and study a model for wavelet sparse regularization in the manifold setup; 
 \emph{(ii)} we provide algorithms for the proposed models;
\emph{(iii)} we show the potential of the proposed algorithms by applying them to 
concrete manifold-valued data.
Concerning \emph{(i)}, we propose a variational scheme employing manifold valued interpolatory wavelets in the regularizing term. In particular, we consider a sparsity promoting $\ell^1$ type term 
as well as an $\ell^0$ type term.  
We obtain results on the existence of minimizers for the proposed models.
Concerning \emph{(ii)}  we provide the details for an algorithmic realization of the proposed variational
model. 
In particular, we apply the concepts of a generalized forward backward-scheme with Gauss-Seidel type update and a trajectory method as well as the well-established concept of a cyclic proximal point algorithm.  
To implement these schemes we derive expressions for 
the (sub)gradients and proximal mappings of the atoms of the wavelet regularizing terms.
This includes the manifold analogues of $\ell^1$ and $\ell^0$ sparse wavelet regularization.
Concerning \emph{(iii)}, we provide a detailed numerical study of the proposed scheme.
We provide experiments with data living on the unit circle, in the two-dimensional sphere as well as in the space of positive matrices.

\subsection{Outline of the paper}

The paper is organized as follows. 
The topic of Section~\ref{sec:model} is to derive a model for wavelet sparse regularization for manifold valued data and show its well-posedness.
In Section~\ref{sec:algorithm} we consider the algorithmic realization of the proposed models.
In Section~\ref{sec:Experiments} we provide a numerical investigation of the proposed algorithms.
Finally, we draw conclusions in Section~\ref{sec:Conclusion}.

\section{Model}\label{sec:model}

 In Section~\ref{sec:Preliminaries} we provide basic information on subdivision schemes and interpolatory multiscale transforms
 needed to define a variational model for wavelet sparse regularization for manifold valued data.
 In Section~\ref{sec:WaveletManifold} we give a detailed formulation of the manifold analogue of the variational problem for wavelet sparse regularization. 
 In  Section~\ref{sec:well-posedness}, we obtain well-posedness results for the variational problem, i.e., we show that all considered problems have a minimizer.

\subsection{Preliminaries: Subdivision schemes, interpolatory multiscale transforms}\label{sec:Preliminaries}

We here provide information on subdivision schemes and interpolatory multiscale transforms we need later on. 

\paragraph{Subdivision schemes.} We consider the multivariate setup. The purpose of a subdivision scheme is to refine a grid function.
More precisely, given a function $p = (p_i)_{i \in \mathbb Z^s}$ which lives on a coarser $s$ dimensional grid,
a subdivision scheme $\mathrm{S}$ assigns a function $\mathrm{S}p$ living on a finer $s$ dimensional grid. 
Here, we use the multiindex notation $i=(i_1,\ldots,i_s) \in \mathbb Z^s$ to specify the points in the domain of the grid functions.
In case of dyadic refinement and a linear subdivision scheme, 
the assignment of $\mathrm{S}p$ to data $p$ is done via the rule
\begin{equation}\label{eq:defSubdivOperator}
\mathrm{S} p_n = 
\sum_{k \in \mathbb N^s} a_{n- 2 k} p_k, \qquad \text{ for } \ n \in \mathbb Z^s.     
\end{equation} 
Here, $a=(a_i)_{i \in \mathbb Z^s}$ is called the mask of the subdivision scheme $\mathrm{S}.$ 
We always assume that $\sum_{i \in \mathbb Z^d} a_i =1,$ which means that $\mathrm{S}$ reproduces constants. 
For simplicity, we will use dyadic refinement in the following. 
We point out, that our approach in this paper is also amenable for  refinement using general dilation matrices; references dealing with subdivision schemes for more general dilation matrices are
\cite{HanSol,HanConvergence,HanComputingSmooth,weinmann2012subdivision}. 
In the following we are interested in interpolatory subdivision schemes. 
To explain the notion of an interpolatory scheme we notice that the mask $a$ on an $s$-variate domain actually encodes 
$2^s$ convolution operators $a^{(l)}=(a_{l + 2i})_{i},$ $l \in \{0,1\}^s.$  
A scheme is interpolatory if the convolution operator corresponding to the coarse level grid $(l=0)$
is the identity operator. 
In formulas, this means that $a_{2 i} = \delta_{i0},$ for all $i \in \mathbb Z^s,$ were $\delta$ denotes the Kronecker symbol which equals one if and only if the indices are equal, and zero else.   
Examples of interpolatory schemes are the well-known four-point scheme \cite{dyn19874} or the schemes of Delaurier and Dubuc \cite{deslauriers1989symmetric} 
with the particular instances of the first order interpolatory scheme and the third order interpolatory Deslaurier-Dubuc scheme given by the masks 
\begin{equation}\label{eq:TheBasicSchemesEmployed}
	a = (\ldots,0, \tfrac{1}{2},1,\tfrac{1}{2},0,\ldots), \quad \text{ and } \quad
    a = (\ldots,0, -\tfrac{1}{16},0,\tfrac{9}{16}, 1,\tfrac{9}{16},0,-\tfrac{1}{16},0, \ldots),
\end{equation}
respectively.
These are univariate schemes which can be adapted such as to work in any multivariate domain of 
dimension $s$ by a tensor product construction: the mask $a'$ of the multivariate scheme is given by
 $a'_n = \prod_{i=1}^{s} a_{n_i}$ where $a$ is the mask of the univariate scheme, and $n \in \mathbb Z^s$ is a multiindex.

In recent years, there has been a lot of interest in nonlinear subdivision schemes 
and in particular in subdivision schemes dealing with geometric or manifold-valued data 
\cite{wallner2005convergence, grohs2008smoothness,xie2008smoothness,weinmannConstrApprox, wallner2011convergence}. 
For geometric data, typically geometric analogues of linear subdivision schemes are considered. 
This means that the coefficients stored in the mask $a$ are used to define a geometric scheme employing a manifold construction replacing the averaging operation in affine space. 
Examples of such constructions include geodesic averaging \cite{wallner2005convergence}, 
$\log$-$\exp$ schemes \cite{rahman2005multiscale} and intrinsic means \cite{weinmannConstrApprox}.
The latter have turned out to be particularly suitable 
\cite{wallner2011convergence, grohs2010general, grohs2010stability}. For this reason, and due to their variational nature, we concentrate on geometric schemes based on intrinsic means. 
The intrinsic mean variant of the subdivision scheme $\mathrm{S}$ is given by  
\begin{equation}\label{eqDefSubdiv}
\mathrm{S} p_n = \mean(a_{n-2\cdot}, p).     
\end{equation}
The intrinsic mean in the manifold $\mathcal M$ is given by  
\begin{equation}\label{eq:DefIntrinsicMeanAnalogue}
\mean(a_{n- 2\cdot}, p) =   \argmin_{v \in \mathcal{M}} \ \sum_j  a_{n-2j} \ \dist(v,p_{j})^2.  
\end{equation}
It replaces the affine average on the right hand side of \eqref{eq:defSubdivOperator} (which is the affine space variant.) 
The coefficients of the scheme are the coefficients of mask $a$ which are the same for a linear scheme and its geometric analogue.
Although the subdivision operator $\mathrm S$ is nonlinear in the general manifold setting we omit the brackets in the notation $\mathrm S u$ since this is particularly convenient in connection with index notation.

The first definition of the intrinsic mean can be traced back to Fr\'echet; a lot of work has been done by Karcher \cite{karcher1977riemannian}. For details including an historic overview we refer to \cite{afsari2011riemannian}; see also \cite{kendall1990probability}. Due to its use as a means of averaging in a manifold,
it is employed as a basic building in various works; e.g., \cite{pennec2006riemannian,fletcher2007riemannian,tron2013riemannian}
as well as many references in the paragraph above. 
Various work deals with the numerical computation of the center; 
see, for instance \cite{afsari2013convergence,ferreira2013newton,arnaudon2013medians}.

The mean  $\mean(a_{i,\cdot},u)$ is not unique for all input constellations. 
However, for data living in a small enough ball uniqueness is guaranteed. 
For a discussion, precise bounds and an extensive list of reference concerning this interesting line of research we refer to Afsari \cite{afsari2011riemannian}. 
A reasonable way to deal with the nonuniqueness is to consider the whole set of minimizers in such a case.
In this paper we do so where -- in case of nonuniqueness -- we add an additional constraint originating from our variational formulation described in Section~\ref{sec:WaveletManifold}.

\paragraph{Interpolatory wavelet transforms.}

Interpolatory wavelet transforms for data living in linear spaces have been considered 
by D. Donoho in \cite{donoho1992interpolating}. 
To explain the idea, we consider grid functions $p_{n,r}, p_{n,r-1}$ 
at the finer scale $r$ and the coarser scale $r-1$
which are related via $p_{2n,r} = p_{n,r-1}$ for all multiindices $n$ and levels  $r$ 
(which might originate from point samples of a function $f$ 
defined on $\mathbb R^s$ with values in $\mathbb R.$)
The interpolatory wavelet transforms for real valued data in \cite{donoho1992interpolating}
are then defined by mapping the signal $p$ to the details 
\begin{equation}\label{eq:DefDetailsIntMult}
d_{n,r} =  d_{n,r}(p) =  2^{-sr/2}\left(p_{n,r} \ominus  \hat p_{n,r} \right), 
\qquad \hat p_{n,r} = \mathrm S p_{n,r-1},     
\end{equation}
where $n$ is a multiindex, $\ominus$ denotes the ordinary subtraction in a vector space, and  $\mathrm S$ denotes a linear interpolatory subdivision scheme.   
Please note the slight abuse of notation in the expression $\mathrm S p_{n,r-1}$ which represents the scheme $\mathrm S$
applied to the data $p_{\cdot,r-1}$ evaluated at the multiindex $n.$ The transformation then maps 
\begin{equation}
p_{n,R}  \mapsto (d_{n,R},\ldots, d_{n,1}, p_{n,0}),
\end{equation}
where $p_{\cdot,R}$ is a grid function at the finest scale used as input.

Interpolatory transforms for manifold-valued data have been introduced in \cite{rahman2005multiscale}. 
In the manifold situation, we interpret \eqref{eq:DefDetailsIntMult} as follows: 
(i) the symbol $\mathrm S$ denotes a geometric subdivision scheme $\mathrm S$,
here concretely the intrinsic mean analog given by \eqref{eq:DefIntrinsicMeanAnalogue} 
of a linear interpolatory scheme, e.g.,  a multivariate tensor product Deslaurier-Dubuc scheme; 
(ii) the symbol $\ominus$ is interpreted via the the inverse of the Riemannian exponential mapping $\exp,$
concretely, $ p_{n,r} \ominus \mathrm{S} p_{n,r-1} = \exp^{-1}_{\mathrm{S} p_{n,r-1}} p_{n,r}.$
We note that, in contrast to the real valued situation, where the tangent spaces at all points are naturally identified, the details now are tangent vectors with $d_{n,r}$ sitting in $\mathrm{S} p_{n,r-1}.$ 
Concerning analytic results for these transforms we refer to  \cite{grohs2009interpolatory,grohs2010stability,weinmann2012interpolatory}.

\subsection{Wavelet Sparse Regularization in the Manifold Setup -- Variational Formulation}\label{sec:WaveletManifold}

The previously discussed interpolatory wavelet transforms are explicit means of processing data
in the sense that analogues of averaging rules and differences in the manifold setting are applied to the data 
directly. In contrast, we here consider an implicit approach based on a variational formulation
where the interpolatory wavelet transform is used as a regularizing term.

In the following, we always consider a complete and connected Riemannian manifold $\mathcal M$ 
(with its canonical metric connection, its Levi-Civita connection). 
More precisely, we consider a manifold $\mathcal M$ with a Riemannian metric which is a smoothly varying inner product
$\langle \cdot,\cdot \rangle_p$
in the tangent space of each point $p$. 
According to the Hopf-Rinow theorem, the complete manifold $\mathcal M$ is geodesically complete in the sense
that any geodesic can be prolongated arbitrarily. 
The Levi-Civita connection is the only connection which is symmetric and which is compatible with the metric. For an account on Riemannian geometry we refer to the books \cite{spivak1975differential,do1992riemannian}. 

Extending the discussion started in the introduction, we derive a model for
wavelet sparse regularization for manifold-valued data in the multivariate situation.
We consider data $f \in \mathcal M^K,$ where $K=(K_1,\ldots,K_{s'}) \in \mathbb K^{s'}$ denotes a multiindex consisting of $s'$ components. We propose to consider the problem
\begin{align}\label{eq:VarProblemModelSec}
\argmin_{u \in \mathcal M^N }  \  \dist(\mathcal A(u),f)^p +  \mathcal W_\lambda^{\alpha,q} (u).
\end{align}
where the target variable $u \in \mathcal M^N = \mathcal M^{N_1} \times \ldots \times \mathcal M^{N_s}$ 
is a multivariate function defined on a regular grid, 
and, in contrast to \eqref{eq:VarProblemIntro}, $N=(N_1,\ldots,N_s) \in \mathbb N^s$ denotes a multiindex consisting of $s$ components representing the $s$-dimensional domain of $u.$ 
For instance, for a classical image $s=2,$ and for a volume $s=3.$ 
Further, we use the usual multiindex notation to denote
\begin{align}
\dist(\mathcal A(u),f)^p = \sum_{i=0}^{K-1}\dist(\mathcal A(u)_i,f_i)^p = 
 \sum_{i_1=0}^{K_1-1}\ldots \sum_{i_{s'}=0}^{K_{s'}-1} 
 \dist(\mathcal A(u)_{i_1,\ldots,i_{s'}},f_{i_1,\ldots,i_{s'}})^p
\end{align}
the $p$th power of the distance in $\mathcal M^K.$
The parameter vector $\lambda = (\lambda_1,\lambda_2)$ regulates the trade-off between the data fidelity, i.e., the distance to the data 
and the regularizing term $\mathcal W_\lambda^{\alpha,q}.$
Possible choices of the imaging operator $\mathcal A $ are discussed below.

Our central topic is the precise definition of the wavelet sparse regularizing term $\mathcal W_\lambda ^{\alpha,q}$ 
in an $s$-variate situation. 
We assume that the signal dimensions $N=(N_1,\ldots,N_s)$
and the scale level $R$ are related via $N=2^R \cdot N'$ with the scalar $R$ and the multiindices $N,N'.$
Then, the coarsest scale $r=0$ has size $N'=(N'_1,\ldots,N'_s)$ with $N'_\nu$ data points in the $\nu$th component,
$1\leq \nu \leq s,$ $\nu \in \mathbb N$.
We define $\tilde u$ by 
\begin{equation}\label{eq:DefSampling}
\tilde u_{n,r} = u_{2^{R-r}n},   \qquad  0 \leq r \leq R.
\end{equation} 
Here, $n$ denotes a multiindex, which, in dependence on the level $r,$ 
takes values in the range $ 0 \leq n <  2^r  N'.$
With the details 
$d_{n,r}(u) =  2^{-sr/2}\left(\tilde u_{n,r} \ominus  \hat u_{n,r} \right)$  
where $\hat u_{n,r} = \mathrm S \tilde u_{n,r-1}$  
defined by
\eqref{eq:DefDetailsIntMult} of the interpolatory wavelet transform, we have  
\begin{equation}\label{eq:DefWavRegMult}
\mathcal W_\lambda^{\alpha,q}(u) = 	
\lambda_1 \cdot \sum_{n,r} 2^{r q \ \left(\alpha+\tfrac{s}{2}-\tfrac{s}{q} \right)}\| d_{n,r}(u) \|^q_{\hat u_{n,r}} +
\lambda_2 \cdot \sum_{n,i} \dist(\tilde u_{n-e_i,0},\tilde u_{n,0})^q.
\end{equation}
Here, 
$\| \cdot \|_{\hat u_{n,r}}$ denotes the norm
induced by the Riemannian scalar product in the points 
$\hat u_{n,r} = \mathrm S \tilde u_{n,r-1},$ 
and we recall that the symbol $s$ denotes the spatial dimension of the domain of $u.$ 
Concerning the second term which addresses the coarsest level, we let $e_i=(\delta_{ij})_{1 \leq j \leq s},$ $i \in {1,\ldots,s}$, be the $i$th unit vector. The term measures the $q$th power of the distance between neighboring data items on the coarsest scale w.r.t.\ any of the coordinate directions.   
We note that often three parameters $\alpha,q,q'$ are considered in connection with the linear space analogue of the regularizer in \eqref{eq:DefWavRegMult}, cf. \cite{chambolle1998nonlinear}. In this respect, we here consider the particular case of $q=q'.$
As already pointed out we are particularly interested in the case $q=1$ in \eqref{eq:DefWavRegMult}, 
which corresponds to the manifold analogue of $\ell^1$-sparse regularization. 

The manifold analogue of $\ell^0$-sparse regularization is given by using the regularizer 
\begin{equation}\label{eq:Defl0SparseMult}
\mathcal W_\lambda^0(u) = 	\lambda_1  \ \# \  \{(n,r) \ : \ d_{n,r}(u) \neq 0\}  \ + \
\lambda_2 \ \# \  \{(n,i) \ : \ \tilde u_{n-e_i,0} \neq \tilde u_{n,0}  \} 
\end{equation}
which incorporates the number of non-zero detail coefficients $d_{n,r}.$

\paragraph{Handling nonunique means.}

For the definition of the details in \eqref{eq:DefDetailsIntMult} we use geometric subdivision schemes 
and we have defined geometric subdivision scheme using intrinsic means in \eqref{eqDefSubdiv}.
As explained above, intrinsic means are locally unique.
(We note that it is often the case in differential geometry that the considered objects are only locally unique, e.g., geodesics.)
To avoid complications, we add an additional constraint originating from our variational formulation.
In case of non-uniqueness, we denote the set of all minimizers in \eqref{eq:DefIntrinsicMeanAnalogue} by 
$\mathrm Sp_n$ and single out those which minimize the corresponding expression used to define $\| d_{n,r}(u) \|.$
More precisely, in such cases, we overload the previously given definition in \eqref{eq:DefDetailsIntMult} by 
\begin{equation}\label{eq:DefDnr}
   d_{n,r}(u) = \left\{ \hat d_{n,r}:
    \hat d_{n,r} = 2^{-sr/2}\left(\tilde u_{n,r} \ominus \hat u_{n,r} \right), 
   \hat u_{n,r} \in \mathrm S u_{n,r-1}, \hat d_{n,r} \in \argmin_{d'_{n,r} \in \mathrm S u_{n,r-1}} \|d'_{n,r}\|   \right\}.
\end{equation}
This means that we choose the details of smallest size (in the sense of the Riemannian metric).

\paragraph{Instances of the imaging operator $\mathcal A$.}

We here discuss various instance of possible imaging operators to give an impression to the reader.
The discussion is by no means complete. 
At first, letting $\mathcal A$ be the identity in the manifold $\mathcal M^s$ 
corresponds to pure denoising. Inpainting situations can be modeled by removing the missing summands.
Let us be more precise.  
To this end, let us consider a matrix $A$ with real-valued entries $a_{ij}$ given as
\begin{equation}
A = 
\begin{pmatrix}
a_{11}  & \cdots & a_{1N}  \\
\vdots  &        & \vdots  \\
a_{K1}  & \cdots & a_{KN}.         
\end{pmatrix}	
\end{equation}
$i$th positive row sums, i.e., $\sum_j a_{ij} > 0$ for all $i=1,\ldots,K.$
(Note that we do not require the particular items $a_{ij}$ to be nonnegative.)  
Here $i,j,N,K$ can be read as multiindices.
Using the matrix $A$ as a kind of kernel, we may define the $i$th component of $\mathcal A(u)$ by
\begin{align}\label{eq:DefMatVecAnaManiIth}
\mathcal A(u)_i = \mean(a_{i,\cdot},u) = \argmin_{v \in \mathcal{M}} \ \sum_j  a_{i,j} \dist(v,u_j)^2.
\end{align}
Then, the denoising situation corresponds to $A$ being the identity and the inpainting situation corresponds to 
removing those rows of the identity matrix which correspond to missing data.
But these are only special cases; actually, we can use any matrix $A$ with positive row sums; see \cite{storath2018variational}.
In particular, by this construction, we can realize the geometric analogue of any convolution operator which reproduces constants. Explicitly, in the bivariate situation (without multiindices), this reads  
\begin{align}\label{eq:ConvMani2D}
\mathcal A(u)_{rs} = \argmin_{v \in \mathcal{M}} \ \sum_{k,l}  a'_{r-k, s-l}  \ \dist(v,u_{kl})^2,
\end{align}
where $a'$ denotes the bivariate convolution kernel. 
We further notice, that in the manifold case, where each data item can be more complex than a real number,
also $\mathcal A$ may denote an item/pixel-wise construction or reconstruction process. 
An example of such a pixel-wise construction process is the pixel-wise generation of diffusion tensor from
DWI measurements in diffusion tensor imaging \cite{baust2016combined}. 
Another example in connection with shape spaces is \cite{stefanoiu2016joint}.

\paragraph{Handling the boundary of the image or volume.}

In the context of wavelets, classical means of dealing with data on a bounded domain consisting of a Cartesian product of intervals is to extend the data beyond the domain by either extending the data by $0$ (zero-padding), by periodizing, or by extending the data by reflection. In the setup of interpolatory manifold valued data there is no distinguished zero-element such that the notion of zero padding does not generalize immediately. 
However, periodizing and extending the data by reflection are notions which directly carry over to the manifold and interpolatory setup.
In the context of orthogonal and biorthogonal wavelets on the interval 
more sophisticated methods using tailored boundary rules have been developed.   
For details we refer to \cite{cohen1993wavelets} and the references therein.
Inspired by these approaches for orthogonal and biorthogonal wavelets in the linear case, 
we use specially tailored boundary rules in experiments in this work. In particular, for a $k$th order linear univariate Deslaurier-Dubuc scheme, a natural boundary treatment consists of fitting a $k$th order polynomial to the $k+1$ leftmost or rightmost data points, and evaluating these polynomials at the corresponding half-integers.

\subsection{Existence of minimizers}\label{sec:well-posedness}

We here derive results on the existence of minimizers for the variational problem 
\eqref{eq:VarProblemModelSec} of wavelet sparse regularization using the regularizers 
$\mathcal W_\lambda^{\alpha,q}(u)$ of \eqref{eq:DefWavRegMult}
and 
$\mathcal W_\lambda^0(u)$ of \eqref{eq:Defl0SparseMult}.
For the $\mathcal W_\lambda^{\alpha,q}(u)$ regularizer with $\lambda_2 > 0$
we get the existence of minimizers without additional constraints 
on the measurement operator $\mathcal A$
and the manifold $\mathcal M$. This result is formulated as 
Theorem~\ref{thm:ExistenceStandardProbLambda2neq0}. 
For the $\mathcal W_\lambda^{\alpha,q}(u)$ regularizer with $\lambda_2 = 0$
we need some additional constraints on $\mathcal A.$
This result is formulated as
Theorem~\ref{thm:ExistenceStandardProbLambda2equals0}.
In particular, existence is guaranteed for the denoising situation with $\mathcal A$ being the identity.
Finally, we give a corresponding existence result for  wavelet sparse regularization 
using $\ell^0$ type regularizing terms in Theorem~\ref{thm:ExistenceL0Sparse}.
We note that, for compact manifolds -- which include the spheres in euclidean space, the rotation groups, as well as the Grassmannian manifolds -- we get the existence of minimizers 
of the wavelet regularizers 
$\mathcal W_\lambda^{\alpha,q}(u)$ regularizer with $\lambda_2 = 0$
and of the  wavelet sparse regularizers $\mathcal W_\lambda^{0}(u)$  
using $\ell^0$ type regularizing terms
without additional constraints on the measurement operator $\mathcal A.$

We start with some preparation.
\begin{lemma} \label{lem:WalphaQTermLSC}
	The regularizing term $\mathcal W_\lambda^{\alpha,q}(u)$ 
	of \eqref{eq:DefWavRegMult} is lower semicontinuous.
\end{lemma}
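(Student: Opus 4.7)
The plan is to decompose $\mathcal W_\lambda^{\alpha,q}(u) = T_1(u) + T_2(u)$ into the wavelet sum (first term) and the coarsest-scale sum (second term), and treat them separately. The coarsest-scale term $T_2$ is in fact continuous: the sampling map $u \mapsto \tilde u_{n,0} = u_{2^R n}$ is a coordinate projection, $\dist$ is continuous on $\mathcal M \times \mathcal M$, and $x \mapsto x^q$ is continuous, so $T_2$ is a finite sum of continuous functions. For $T_1$, because finite sums of nonnegative lower semicontinuous functions are lower semicontinuous and $x \mapsto x^q$ is continuous and nondecreasing on $[0,\infty)$ for $q \geq 1$, it suffices to prove that the detail-norm map $u \mapsto \|d_{n,r}(u)\|_{\hat u_{n,r}}$ is lower semicontinuous for each fixed pair $(n,r)$. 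Using the identity $\|\exp_{\hat u}^{-1}\tilde u\|_{\hat u} = \dist(\hat u, \tilde u)$ together with the minimum-norm selection prescribed by \eqref{eq:DefDnr}, this map rewrites as
\begin{equation*}
\|d_{n,r}(u)\|_{\hat u_{n,r}} \;=\; 2^{-sr/2} \inf_{\hat u \in \mathrm{S}\tilde u_{n,r-1}} \dist(\tilde u_{n,r}, \hat u).
\end{equation*}

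To establish lower semicontinuity of this quantity at a point $u$, I would take a sequence $u^{(k)} \to u$, pass to a subsequence along which the values converge to $\liminf_k \|d_{n,r}(u^{(k)})\|$, and for each $k$ select $\hat u^{(k)} \in \mathrm{S}\tilde u^{(k)}_{n,r-1}$ realizing the infimum. An a priori estimate for intrinsic means, obtained by comparing the defining argmin against any fixed reference point, together with the boundedness of the convergent data $\tilde u^{(k)}_{\cdot,r-1}$, confines the sequence $\hat u^{(k)}$ to a bounded subset of $\mathcal M$. By the Hopf--Rinow theorem, such sets are relatively compact in the complete manifold $\mathcal M$, so along a further subsequence $\hat u^{(k)} \to \hat u^*$. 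Continuous dependence of the intrinsic-mean objective $v \mapsto \sum_j a_{n-2j}\,\dist(v, \tilde u_{j,r-1})^2$ on the data then yields outer semicontinuity of the argmin set-valued map, and hence $\hat u^* \in \mathrm{S}\tilde u_{n,r-1}$. Continuity of $\dist$ finally gives $\dist(\tilde u^{(k)}_{n,r}, \hat u^{(k)}) \to \dist(\tilde u_{n,r}, \hat u^*) \geq \inf_{\hat u \in \mathrm{S}\tilde u_{n,r-1}} \dist(\tilde u_{n,r}, \hat u)$, which is the inequality we need.

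The main obstacle is the set-valued nature of $\mathrm{S}$ at points of non-uniqueness of the intrinsic mean, compounded by the fact that the detail is a tangent vector anchored at a base point that itself moves along the sequence. Both difficulties are handled by the two classical ingredients above, namely outer semicontinuity of the argmin map and Hopf--Rinow compactness of closed bounded sets, combined with the key observation that the selection rule in \eqref{eq:DefDnr} is precisely the minimum-norm relaxation designed to preserve lower semicontinuity.
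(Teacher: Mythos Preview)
Your proposal is correct and follows essentially the same route as the paper: decompose into summands, note the coarsest-scale terms are continuous, and for each detail term take a sequence, extract a convergent subsequence of the predictions $\hat u^{(k)}$ via Hopf--Rinow, show the limit lies in $\mathrm{S}\tilde u_{n,r-1}$ by outer semicontinuity of the argmin, and conclude by continuity of $\dist$ together with the minimum-norm selection in \eqref{eq:DefDnr}. The only point to tighten is your boundedness step: ``comparing the defining argmin against any fixed reference point'' gives an a priori bound when all weights $a_{n-2j}$ are nonnegative, but the masks here (e.g.\ the third-order Deslauriers--Dubuc scheme) have negative entries, so that comparison does not immediately control any single $\dist(\hat u^{(k)},p_j)$; the paper handles this by invoking an external lemma guaranteeing that means with positive total weight stay in a bounded ball when the data do.
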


\begin{proof}	
	In order to show that the sum in \eqref{eq:DefWavRegMult} is a lower semicontinuous function of $u,$
	it is enough to show that each member of the sum is a lower semicontinuous function of $u.$
	The members of the form $u \mapsto \dist(\tilde u_{n-e_i,0},\tilde u_{n,0})^q$ are continuous by the continuity of the distance function $\dist$.
	We show that the mappings $u \mapsto \| d_{n,r}(u) \|^q_{\hat u_{n,r}}$ 
	are lower semicontinuous functions on $\mathcal M^N.$ 
	To this end, we consider a sequence $(u^{(l)})_{l \in \mathbb N},$ each $u^{(l)} \in \mathcal M^N,$
	such that $u^{(l)} \to u$ in $ \mathcal M^N,$ as $l \to \infty.$ Since the power function is increasing, it is sufficient to show that 	
	\begin{equation}\label{eq:ToShowLSC}
	\| d_{n,r}(u) \|_{\hat u_{n,r}}  \leq   \liminf_{l} \| d_{n,r}(u^{(l)}) \|_{\hat u^{(l)}_{n,r}}, 
	\quad \text{ for all $n,r.$}
	\end{equation}
	Here, in case of non-unique details, we choose a realization $\hat d_{n,r}(u^{(l)}) \in d_{n,r}(u^{(l)})$ 
	and a corresponding prediction $\hat u^{(l)}_{n,r} \in \mathrm S u^{(l)}_{n,r-1}$ according to 
	\eqref{eq:DefDnr}. We note that, by \eqref{eq:DefDnr}, 
	the value $\| d_{n,r}(u^{(l)}) \|$ does not depend on the choice of  $\hat d_{n,r}(u^{(l)}).$
	
	Since, by assumption, $u^{(l)} \to u$ in $\mathcal M^N,$ we find a point $x \in \mathcal M$ and a positive number $r$ such 
	that all members $u^{(l)}_j$ together with all $u_j,$ 
	$j=1,\ldots,N,$ and 
	$l \in \mathbb N$ are contained in a common ball $B(x,R')$ around $x$ with radius $R',$
	or in other words, all members of all sequences are contained in a common bounded set.
	In particular, $u^{(l)}_{n,r-1} \in B(x,R'),$ cf. \eqref{eq:DefSampling}.
	As explained in Section~\ref{sec:Preliminaries} a subdivision scheme encodes 
	$2^s$ convolution operators with kernels given by $a^{(j)}=(a_{j + 2i})_{i},$ $j \in \{0,1\}^s,$ 
	where $a$ denotes the mask of the scheme. 
	Each convolution operator is defined via the intrinsic mean; cf. 
	\eqref{eqDefSubdiv} and \eqref{eq:DefIntrinsicMeanAnalogue}.
	Hence, for each convolution operator defined via $a^{(j)},$ we may apply
	\cite[Lemma 2]{storath2018variational} to obtain a positive number $R_j$
	such that all means 	
	$\mean(a^{(j)}_{i,\cdot},u^{(l)}_{n,r-1})$ are contained in a common ball $B(x,R_j).$ 		
	Taking the maximum of this radii $R = \max_{j} R_j,$ w.r.t.\ the $2^s-1$ averaging operators, where
	$s$ denotes the dimension of the domain, we have that all
	$\hat u^{(l)}_{n,r} \in \mathrm S u^{(l)}_{n,r-1}$ are contained in the common ball $B(x,R)$
	for all $l \in \mathbb N.$
	Hence, the $\hat u^{(l)}_{n,r}$ form a bounded sequence. 
	Further, the sequence $\tilde u^{(l)}_{n,r}$ is bounded as well as a convergent sequence.
    As a next step, we choose a subsequence indexed by $l_k$ such that 
	\begin{equation}\label{eq:ToShowLSC2}
	\lim_{k} \| d_{n,r}(u^{(l_k)}) \|_{\hat u^{(l_k)}_{n,r}}  
	=  	2^{-sr/2} \lim_{k} \|\tilde u^{(l_k)}_{n,r} \ominus \hat u^{(l_k)}_{n,r}\|_{\hat u^{(l_k)}_{n,r}}
	= \liminf_{l} \| d_{n,r}(u^{(l)}) \|_{\hat u^{(l)}_{n,r}}, 	
	\end{equation}
	and now invoke the boundedness of the sequences. 
	By the Hopf-Rinow theorem, since $\mathcal M$ is assumed to be geodesically complete,   
	we may extract convergent subsequences $\hat u^{(l_k)}_{n,r}, \tilde u^{(l_k)}_{n,r}$ 
	(which we, abusing notation for better readability, also denote by the indexation $n_k$) such that the limits 
	\begin{equation}\label{eq:DefvPrime}
	v_{n,r} := \lim_{k \to \infty} \hat u^{(l_k)}_{n,r}
	\end{equation}
	exist, 
	and such that $\tilde u^{(l_k)}_{n,r}$ converges to $u_{n,r}$ since $u^{(l)}$ is assumed to converge to $u.$
	Since 
	$
	\|\tilde u^{(l_k)}_{n,r} \ominus \hat u^{(l_k)}_{n,r}\|_{\hat u^{(l_k)}_{n,r}}
	=  \dist(\tilde u^{(l_k)}_{n,r},\hat u^{(l_k)}_{n,r}),
	$
	and the distance function is continuous, we have 
	\begin{align}\label{eq:LscFirstPart}
	     \|\tilde u_{n,r} \ominus v_{n,r}\|_{v_{n,r}}
		 = \lim_{k} \|\tilde u^{(l_k)}_{n,r} \ominus \hat u^{(l_k)}_{n,r}\|_{\hat u^{(l_k)}_{n,r}}
		 =  2^{sr/2} \liminf_{l} \| d_{n,r}(u^{(l)}) \|_{\hat u^{(l)}_{n,r}}, 	
	\end{align}
	using \eqref{eq:ToShowLSC2} for the last identity.	
	We next show that $v_{n,r} \in \mathrm S \tilde u_{n,r-1}.$
	To see this, we assume towards a contradiction that $v_{n,r}$ is not in $\mathrm S \tilde u_{n,r-1}= \mean(a_{n- 2\cdot}, \tilde u_{\cdot,r-1}).$
	Then, 
	there is an element $y \in \mean(a_{n- 2\cdot}, \tilde u_{\cdot,r-1})$ 
	such that,
	by the definition in \eqref{eq:DefIntrinsicMeanAnalogue},  
	$ \sum_j  a_{n-2j} \ \dist(y,\tilde u_{j,r-1})^2 < \sum_j  a_{n-2j}
	 \ \dist(v_{n,r},\tilde u_{j,r-1})^2.$
	Since 
	$\hat u^{(l_k)}_{n,r} \to v_{n,r}$ as $k \to \infty$
	by \eqref{eq:DefvPrime}, there is an index $k_0$
	such that 
	$ \sum_j  a_{n-2j} \ \dist(y,u^{(l_{k_0})}_{j,r-1})^2 < \sum_j  a_{n-2j} \ \dist(\hat u^{(l_{k_0})}_{n,r},u^{(l_{k_0})}_{j,r-1})^2$
	which contradicts $u^{(l_{k_0})}_{n,r}$ being a minimizer of the corresponding sum.	
	Hence $v_{n,r} \in \mathrm S u_{n,r-1}.$ By the definition in \eqref{eq:DefDnr}
	this implies
	$ \|\tilde u_{n,r} \ominus \hat u_{n,r}\|_{\hat u_{n,r}}  \leq  \|\tilde u_{n,r} \ominus v_{n,r}\|_{v_{n,r}},$
	for any $\hat u_{n,r}$ as in \eqref{eq:DefDnr}.
	Hence,  $\| d_{n,r}(u) \|_{\hat u_{n,r}} \leq 2^{-sr/2} \|\tilde u_{n,r} \ominus v_{n,r}\|_{v_{n,r}}.$
    Together with \eqref{eq:LscFirstPart}, this implies 
    \begin{align}\label{eq:LscFirstAndSecondPart}
    	\| d_{n,r}(u) \|_{\hat u_{n,r}}  \leq 2^{-sr/2} \|\tilde u_{n,r} \ominus v_{n,r}\|_{v_{n,r}}
    	=  \liminf_{l} \| d_{n,r}(u^{(l)}) \|_{\hat u^{(l)}_{n,r}}, 	
    \end{align}  
	which means that 
	the mappings $u \mapsto \| d_{n,r}(u) \|^q_{\hat u_{n,r}}$ 
	are lower semicontinuous and, in turn,
	yields the assertion of the lemma.
\end{proof}

\begin{lemma} \label{lem:WnullQTermLSC}
	The $\ell^0$ type regularizing term 
	$\mathcal W_\lambda^0(u)$ of \eqref{eq:Defl0SparseMult}
	is lower semicontinuous.	
\end{lemma}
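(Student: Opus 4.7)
The plan is to decompose the $\ell^0$ regularizer into a finite sum of atomic indicator terms and verify lower semicontinuity (lsc) of each atom separately, exploiting lsc results already established.

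First I would rewrite
\begin{equation*}
\mathcal W_\lambda^0(u) = \lambda_1 \sum_{n,r} \indfunc\!\left[\| d_{n,r}(u) \|_{\hat u_{n,r}} > 0 \right]
 + \lambda_2 \sum_{n,i} \indfunc\!\left[ \dist(\tilde u_{n-e_i,0},\tilde u_{n,0}) > 0 \right],
\end{equation*}
where the indicator $\indfunc[\,\cdot > 0\,]$ is applied on $[0,\infty)$. Since a finite sum of lsc functions is lsc, it suffices to show that each atom is lsc in $u$.

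For the coarse-scale atoms, the mapping $u \mapsto \dist(\tilde u_{n-e_i,0},\tilde u_{n,0})$ is continuous by continuity of the Riemannian distance. For the detail atoms, I would invoke Lemma~\ref{lem:WalphaQTermLSC} (with $q=1$), whose proof yields, member by member, that $u \mapsto \| d_{n,r}(u) \|_{\hat u_{n,r}}$ is a well-defined (independent of the selection in \eqref{eq:DefDnr}) and lsc function on $\mathcal M^N$. Composing these with the indicator, the crucial observation is that the scalar function $y \mapsto \indfunc[y > 0]$ on $[0,\infty)$ is \emph{nondecreasing} and lsc: its super-level sets are $(0,\infty)$, $\emptyset$, or $[0,\infty)$, which are all open in $[0,\infty)$. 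It is a standard fact that the composition of a nondecreasing lsc function with an lsc function is lsc (because the preimage of an open upper set under an lsc function is open). Applied here, this gives lsc of each indicator atom.

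Equivalently, and perhaps more transparently, I would argue directly: let $u^{(l)} \to u$ and set $y(u) := \| d_{n,r}(u) \|_{\hat u_{n,r}}$. If $y(u)=0$ then $\indfunc[y(u)>0]=0 \leq \liminf_l \indfunc[y(u^{(l)})>0]$ trivially. If $y(u)>0$, then by lsc of $y$ established in Lemma~\ref{lem:WalphaQTermLSC} we have $\liminf_l y(u^{(l)}) \geq y(u) > 0$, so $y(u^{(l)}) > 0$ for all $l$ sufficiently large, whence $\liminf_l \indfunc[y(u^{(l)})>0]=1=\indfunc[y(u)>0]$. The same two-case argument applies to the coarse-scale atoms using continuity of the distance. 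Summing, the regularizer is lsc.

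The only nontrivial ingredient is the lsc of $u \mapsto \| d_{n,r}(u) \|_{\hat u_{n,r}}$, which one must import from Lemma~\ref{lem:WalphaQTermLSC}; this is also where the potential nonuniqueness of the intrinsic mean (and hence of $\hat u_{n,r}$) has already been handled via the convention \eqref{eq:DefDnr}. Once this input is available, the rest is a short and routine lsc-of-indicator argument.
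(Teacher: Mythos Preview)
Your proof is correct and follows essentially the same strategy as the paper: decompose $\mathcal W_\lambda^0$ into atomic indicator terms and show each is lsc by the two-case argument (value $0$ is trivial; value $1$ persists along convergent sequences). The only difference is in how you justify persistence for the detail atoms: the paper argues directly that if $\tilde u_{n,r}\notin \mathrm S\tilde u_{\cdot,r-1}$ then no subsequence can have $\tilde u^{(l_k)}_{n,r}\in \mathrm S\tilde u^{(l_k)}_{\cdot,r-1}$, since limits of means are means --- essentially reproving inline the closure step from Lemma~\ref{lem:WalphaQTermLSC}. You instead import the lsc of $u\mapsto\|d_{n,r}(u)\|_{\hat u_{n,r}}$ from Lemma~\ref{lem:WalphaQTermLSC} as a black box and compose with the indicator. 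Your route is slightly more economical in that it avoids duplicating that argument; the paper's is more self-contained. Substantively the two are equivalent.

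One small caution: your general claim that ``nondecreasing lsc composed with lsc is lsc'' requires a bit of care in full generality (one typically also needs left-continuity of the outer function), but your direct two-case verification for the specific indicator $\indfunc[\,\cdot>0\,]$ is clean and suffices here, so there is no gap.
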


\begin{proof}		
		We show that the mappings 
	    \begin{equation}
	    f^1_{n,r}: u \mapsto 
	    \begin{cases}
	    1,  & \text{if } d_{n,r}(u) \neq  0,    \\
	    0,  & \text{else,}    \\
	    \end{cases} \quad   
	    \quad
	    f^2_{n,i}: u \mapsto 
	    \begin{cases}
	    1,  & \text{if } \tilde u_{n-e_i,0} \neq \tilde u_{n,0},    \\
	    0,  & \text{else.}    \\
	    \end{cases}	    
	    \end{equation}
	    are lower semicontinuous functions of $u.$
	    This implies the assertion of the lemma since 
	    $\mathcal W_\lambda^0(u) = \lambda_1 \sum_{n,r}f^1_{n,r}(u) + \lambda_2 \sum_{n,i}f^2_{n,i}(u).$   
		To see the lower semicontinuity of $f^2_{n,i},$
		let $u^{(l)} \to u$ in $ \mathcal M^N,$ as $l \to \infty.$
		If $f^2_{n,i}(u)=0$ there is nothing to show, so we assume that 
		$f^2_{n,i}(u)=1.$ Then, $u_{n-e_i,0} \neq \tilde u_{n,0},$ and since $u^{(l)} \to u,$ this implies 
		 $u^{(l)}_{n-e_i,0} \neq \tilde u^{(l)}_{n,0},$ for sufficiently large $l.$
		Hence, for suffiently large $l$,  $f^2_{n,i}(u^{(l)})=1,$ and therefore, 
		$\liminf_l f^2_{n,i}(u^{(l)}) \geq f^2_{n,i}(u).$
		We next study the lower semicontinuity of $f^1_{n,r},$ and again let $u^{(l)} \to u.$
		As above, if $f^1_{n,r}(u)=0,$ there is nothing to show.
		So we may assume that $d_{n,r}(u) \neq 0,$ which means that  
		$\tilde u_{n,r} \not\in \mathrm S u_{n,r-1}$ by \eqref{eq:DefDnr}.
		We show that $\tilde u^{(l)}_{n,r} \not\in \mathrm S u^{(l)}_{n,r-1}$ for sufficently large $l$
		which then implies that $f^1_{n,r}(u^{(l)})=1.$
		Assume to the contrary, that there is a subsequence $u^{(l_k)}$ such that
		$\tilde u^{(l_k)}_{n,r} \in \mathrm S u^{(l_k)}_{n,r-1}.$
		Then, $\lim_k u^{(l_k)}_{n,r}$ (which exists by our assumption) is a mean for data  $\lim_k u^{(l_k)}_{n,r-1}.$ But this means $\tilde u_{n,r} \not\in \mathrm S u_{n,r-1}$ 
		which is a contradiction.		
		Hence, $f^1_{n,r}(u^{(l)})=1$ for sufficiently large $l,$ and therefore 
		$\liminf_l f^1_{n,r}(u^{(l)}) \geq f^1_{n,r}(u).$ 
		In summary,  $f^1_{n,r}$ and  $f^2_{n,i}$ are
		lower semicontinuous which implies that 
		$\mathcal W_\lambda^0$ is lower  semicontinuous.	
\end{proof}

\begin{theorem}\label{thm:ExistenceStandardProbLambda2neq0}
The variational problem \eqref{eq:VarProblemModelSec} of wavelet sparse regularization using the regularizers 
$\mathcal W_\lambda^{\alpha,q}$ of \eqref{eq:DefWavRegMult} with $\lambda_2 \neq 0$ has a minimizer.
\end{theorem}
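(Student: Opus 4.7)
The plan is to apply the direct method of the calculus of variations. I take a minimizing sequence $(u^{(l)})_{l \in \N} \subset \mathcal M^N$ for the functional $F(u) = \dist(\mathcal A(u), f)^p + \mathcal W_\lambda^{\alpha,q}(u)$, so that $F(u^{(l)})$ is bounded by some constant $C$. The task splits into three pieces: (i) show that the sequence lies in a common compact subset of $\mathcal M^N$; (ii) extract a subsequential limit via the Hopf--Rinow theorem; (iii) invoke lower semicontinuity of $F$ to conclude that this limit is a minimizer.

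Step (i) is the crux and is where the hypothesis $\lambda_2 \neq 0$ enters essentially. From $\mathcal W_\lambda^{\alpha,q}(u^{(l)}) \leq C$, the coarsest-scale term $\lambda_2 \sum_{n,i} \dist(\tilde u^{(l)}_{n-e_i,0}, \tilde u^{(l)}_{n,0})^q$ is bounded uniformly in $l$, so every neighbor distance on the coarsest grid is controlled; iterating along coordinate paths, the entire coarsest-scale slice has diameter bounded by some $D_0$ independent of $l$. I then propagate this diameter bound up through the wavelet levels: by \cite[Lemma~2]{storath2018variational}, each subdivision prediction $\hat u^{(l)}_{n,r}$ lies within a radius depending only on the mask and on the diameter of $\tilde u^{(l)}_{\cdot,r-1}$ around any reference point, and the bound on the detail norms $\|d_{n,r}(u^{(l)})\|_{\hat u^{(l)}_{n,r}}^q$ that follows from $\mathcal W_\lambda^{\alpha,q}(u^{(l)}) \leq C$ then lets me pass from $\hat u^{(l)}_{n,r}$ to the actual fine-scale value $\tilde u^{(l)}_{n,r}$. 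After $R$ refinement steps, all entries of $u^{(l)}$ lie within a common diameter $D$ of each other.

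Converting this relative bound into an absolute one uses the data fidelity as an anchor: for any fixed index $i_0$, $\dist(\mathcal A(u^{(l)})_{i_0}, f_{i_0}) \leq C^{1/p}$, and since $\mathcal A(u^{(l)})_{i_0}$ is a weighted intrinsic mean of the $u^{(l)}_j$, \cite[Lemma~2]{storath2018variational} places it within a radius depending only on $D$ of any entry $u^{(l)}_{j_0}$. Combining these two inequalities bounds $\dist(u^{(l)}_{j_0}, f_{i_0})$ uniformly in $l$, and together with the diameter bound this confines the whole sequence $(u^{(l)})$ to a common closed ball in $\mathcal M^N$. By completeness of $\mathcal M$ and Hopf--Rinow, this ball is compact, so a subsequence $u^{(l_k)}$ converges in $\mathcal M^N$ to some $u^*$.

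For step (iii), lower semicontinuity of the regularizer is exactly Lemma~\ref{lem:WalphaQTermLSC}, and lower semicontinuity of the data term $u \mapsto \dist(\mathcal A(u), f)^p$ on the bounded region in question can be established by an argument analogous to the subdivision part of the proof of Lemma~\ref{lem:WalphaQTermLSC} applied to the mean-valued operator $\mathcal A$ (using boundedness to extract convergent subsequences of the possibly non-unique means and comparing minimization problems in the limit). Hence $F(u^*) \leq \liminf_k F(u^{(l_k)}) = \inf F$, which proves that $u^*$ is a minimizer. The main obstacle I anticipate is step (i): the wavelet regularizer supplies only \emph{relative} information (neighbor distances on the coarsest scale and detail magnitudes at finer scales), which bounds the diameter of $u$ but not its location, and turning this into an absolute bound requires carefully exploiting the intrinsic-mean structure of $\mathcal A$ together with the data fidelity.
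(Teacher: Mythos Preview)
Your proposal is correct and follows the same underlying strategy as the paper (the direct method: coercivity plus lower semicontinuity), but the presentation differs. The paper does not carry out the direct method by hand; it verifies two hypotheses---lower semicontinuity of $\mathcal W_\lambda^{\alpha,q}$ (Lemma~\ref{lem:WalphaQTermLSC}) and the implication $\diam(u^{(k)})\to\infty \Rightarrow \mathcal W_\lambda^{\alpha,q}(u^{(k)})\to\infty$---and then invokes \cite[Theorem~1]{storath2018variational} as a black box. That cited theorem already contains the minimizing-sequence argument, the anchoring via the data term, and the lower semicontinuity of $u\mapsto \dist(\mathcal A(u),f)^p$, all of which you spell out explicitly. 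Your level-by-level propagation of the diameter bound (coarsest scale $\to$ finer scales via subdivision bounds and detail norms) is actually more careful than what appears in the paper's proof of Theorem~\ref{thm:ExistenceStandardProbLambda2neq0}, which controls only the coarsest-level diameter from the $\lambda_2$ term; the paper defers the analogous induction on levels to the proof of Theorem~\ref{thm:ExistenceStandardProbLambda2equals0}. So your argument is self-contained where the paper relies on an external reference, and your treatment of the coercivity step is in fact somewhat more complete than the short argument given in the paper for this theorem.
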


\begin{proof}	
	By Lemma~\ref{lem:WalphaQTermLSC}, $\mathcal W_\lambda^{\alpha,q}$ is lower semicontinuous.
	We consider a sequence of signals $u^{(k)},$ and use the notation $\diam(u^{(k)})$ to denote the diameter of $u^{(k)}$.
	We show that $\mathcal W_\lambda^{\alpha,q}(u^{(k)}) \to \infty,$ as $\diam(u^{(k)}) \to \infty.$ 		
	To this end we consider the sums $\sum_{n,i} \dist(\tilde u^{(k)}_{n-e_i,0},\tilde u^{(k)}_{n,0})^q$ in  \eqref{eq:DefWavRegMult}, an note that 
	$\sum_{n,i} \dist(\tilde u^{(k)}_{n-e_i,0},\tilde u^{(k)}_{n,0})^q \geq 
	\left(\max _{n,i} \dist(\tilde u^{(k)}_{n-e_i,0},\tilde u^{(k)}_{n,0}) \right)^q.$
	Further, $\diam(u^{(k)}) \leq  C \max _{n,i} \dist(\tilde u^{(k)}_{n-e_i,0},\tilde u^{(k)}_{n,0})$
	where the constant $C$ is smaller than $|N'|=|N'_1|+\ldots+|N'_s|$ where $N'$ denotes the multiindex containing the dimensions of $u$ on the coarsest scale; cf. the description near \eqref{eq:DefSampling}.
	Hence, $\diam(u^{(k)})$ $\leq  C$ $\sum_{n,i} \dist(\tilde u^{(k)}_{n-e_i,0},\tilde u^{(k)}_{n,0})^q,$ with $C$ independent of $k.$ Therefore, $\diam(u^{(k)})\leq  C \mathcal W_\lambda^{\alpha,q},$
	and so $\mathcal W_\lambda^{\alpha,q}(u^{(n)}) \to \infty,$ as $\diam(u^{(n)}) \to \infty.$
	Together with the lower semicontinuity of $\mathcal W_\lambda^{\alpha,q}$ all requirements of 
	\cite[Theorem 1]{storath2018variational} are fulfilled and we may apply this theorem to conclude the assertion.		
\end{proof}

We note that a statement analogous to Theorem~\ref{thm:ExistenceStandardProbLambda2neq0} does not hold for the
regularizer $\mathcal W_\lambda^{\alpha,q}$ of \eqref{eq:DefWavRegMult} with $\lambda_2 = 0,$
since then coercivity cannot be ensured in general.
To account for that, we give a variant of Theorem~\ref{thm:ExistenceStandardProbLambda2neq0} which imposes some additional constraints to $\mathcal A,$ but then also applies to the situation $\lambda_2 = 0.$

\begin{theorem}\label{thm:ExistenceStandardProbLambda2equals0}
	The variational problem \eqref{eq:VarProblemModelSec} of wavelet regularization using the regularizers 
	$\mathcal W_\lambda^{\alpha,q}$ of \eqref{eq:DefWavRegMult} with $\lambda_2 = 0$ has a minimizer
	provided 
	$\mathcal A$ is an operator such that there is a constant $C>0$ such that, for any signal $u$, the coarsest level of $u$ may be estimated by 
	\begin{equation}\label{eq:AddContraint4WavSp}
		\dist(u_{n-e_i,0},u_{n,0}) \leq C \max (\diam (\mathcal A (u)),R(u)),		
	\end{equation}
	for all $n,i.$ 
	In particular, the problem \eqref{eq:VarProblemModelSec} has a minimizer for $\mathcal A$ being the identity.
	Furthermore, the problem always has a solution, when the manifold $\mathcal M$ is compact.
\end{theorem}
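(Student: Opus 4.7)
The plan is to apply the direct method of the calculus of variations, mirroring the structure of the proof of Theorem~\ref{thm:ExistenceStandardProbLambda2neq0}. Lower semicontinuity of the functional $F(u) := \dist(\mathcal A(u),f)^p + \mathcal W_\lambda^{\alpha,q}(u)$ on $\mathcal M^N$ is already available: Lemma~\ref{lem:WalphaQTermLSC} supplies the lower semicontinuity of $\mathcal W_\lambda^{\alpha,q}$, and the data term is continuous under the standing continuity assumption on $\mathcal A$ (satisfied, e.g., by the mean-based instances in Section~\ref{sec:WaveletManifold}). By \cite[Theorem~1]{storath2018variational}, it then suffices to verify coercivity, namely $F(u^{(k)}) \to \infty$ whenever $\diam(u^{(k)}) \to \infty$.

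For coercivity I would argue by contradiction, assuming $\diam(u^{(k)}) \to \infty$ along a sequence on which $F(u^{(k)})$ stays bounded. Boundedness of $F(u^{(k)})$ yields uniform bounds on $\diam(\mathcal A(u^{(k)}))$ (since $f$ is fixed) and on the wavelet-detail sum $R(u^{(k)})$ appearing in \eqref{eq:DefWavRegMult} with $\lambda_2=0$. Plugging these into the hypothesis \eqref{eq:AddContraint4WavSp} yields a uniform bound on every coarsest-level increment $\dist(u^{(k)}_{n-e_i,0},u^{(k)}_{n,0})$, and hence a uniform bound on $\diam(\tilde u^{(k)}_{\cdot,0})$. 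To upgrade this to a bound on $\diam(u^{(k)})$ itself, I would proceed by induction on the scale $r = 1,\dots,R$ using the reconstruction identity
\begin{equation*}
\tilde u^{(k)}_{n,r} = \exp_{\hat u^{(k)}_{n,r}}\!\bigl(2^{sr/2} d_{n,r}(u^{(k)})\bigr), \qquad \hat u^{(k)}_{n,r} \in \mathrm S \tilde u^{(k)}_{\cdot,r-1}.
\end{equation*}
The mean-based subdivision step maps any bounded set of inputs to a bounded set of outputs by \cite[Lemma~2]{storath2018variational} (exactly as exploited inside the proof of Lemma~\ref{lem:WalphaQTermLSC}), and each inverse-exponential correction has length controlled by the (uniformly bounded) wavelet norms $\|d_{n,r}(u^{(k)})\|_{\hat u^{(k)}_{n,r}}$. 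Since there are only finitely many scales, iteration delivers a uniform bound on $\diam(u^{(k)})$, contradicting the assumption.

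The identity case $\mathcal A = \mathrm{id}$ trivially verifies \eqref{eq:AddContraint4WavSp} with $C=1$, since $\dist(u_{n-e_i,0},u_{n,0}) \le \diam(u) = \diam(\mathcal A(u))$. For compact $\mathcal M$, the space $\mathcal M^N$ is itself compact, so every minimizing sequence admits a convergent subsequence; lower semicontinuity of $F$ then yields a minimizer directly, making the coercivity step and the hypothesis on $\mathcal A$ entirely unnecessary. The main technical obstacle I anticipate is the multiscale bookkeeping in the coercivity step, specifically showing that the whole cascade of predictions $\hat u^{(k)}_{n,r}$ remains confined to a common bounded region as one climbs the pyramid; the level-wise application of \cite[Lemma~2]{storath2018variational} together with the finiteness of $R$ is the key ingredient that prevents quantitative blow-up and ultimately closes the contradiction.
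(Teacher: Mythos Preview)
Your argument is correct and its mathematical core---bounding the coarsest-level increments via hypothesis~\eqref{eq:AddContraint4WavSp} and then climbing the multiscale pyramid by induction on $r$ using the boundedness of the detail norms together with \cite[Lemma~2]{storath2018variational}---is exactly what the paper does. The only organizational difference is that the paper does not invoke \cite[Theorem~1]{storath2018variational} (which, as used in Theorem~\ref{thm:ExistenceStandardProbLambda2neq0}, asks for coercivity of the regularizer alone, something that fails here since subdivision-generated signals with spread coarsest level have $\mathcal W_\lambda^{\alpha,q}=0$); instead it routes through Theorem~\ref{thm:ExistenceCondR4SecondOrd} (=\cite[Theorem~7]{storath2018variational}), which already packages the interplay between $\diam(\mathcal A(u))$, the hypothesis~\eqref{eq:AddContraint4WavSp}, and the regularizer, so that one only has to verify the implication ``$\diam(u^{(k)})\to\infty$ and coarsest-level increments bounded $\Rightarrow$ $\mathcal W_\lambda^{\alpha,q}(u^{(k)})\to\infty$''. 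Your direct coercivity argument for the full functional $F$ is equally valid and arguably more transparent; just be aware that the citation to \cite[Theorem~1]{storath2018variational} may not literally fit, and you should either appeal to the direct method on $\mathcal M^N$ (Hopf--Rinow gives the needed compactness of closed bounded sets) or switch to Theorem~\ref{thm:ExistenceCondR4SecondOrd} as the paper does.
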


For the proof of Theorem~\ref{thm:ExistenceStandardProbLambda2equals0}
we employ \cite[Theorem 7]{storath2018variational} which we here state for the reader's convenience. 
\begin{theorem}\label{thm:ExistenceCondR4SecondOrd}
	Let $(l_0,r_0),$ $\ldots,$ $(l_S,r_S)$ be $S$ pairs of (a priori fixed) indices.
	We assume that $R$ is lower semicontinuous.
	We further assume that $R$ is a regularizing term such that, 
	for any sequences of signals $u^{(n)},$  
	the conditions $\diam(u^{(n)}) \to \infty$ and $\dist(u^{(n)}_{l_s},u^{(n)}_{r_s}) \leq C,$ for some $C>0$ and for all $n \in \mathbb N$ and all $s \in \{0,S\},$
	imply that $R(u^{(n)}) \to \infty.$  
	If $\mathcal A$ is an imaging operator such that there is a constant $C'>0$ such that, for any signal $u$, $dist(u_{l_s},u_{r_s}) \leq C' \max (\diam (\mathcal A u),R(u)),$ for all $s \in \{0,S\}$,
	then the variational problem 
	\begin{align}\label{eq:TichManiGeneral}
	\argmin_{u \in \mathcal{M}^N} \dist(\mathcal A(u),f)^p + \lambda \  R(u), \qquad \lambda>0,
	\end{align}
	 has a minimizer.
\end{theorem}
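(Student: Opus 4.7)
The plan is to run a standard direct-method argument: construct a minimizing sequence, show it is bounded by combining the two hypotheses in tandem, extract a convergent subsequence via the Hopf--Rinow theorem, and close via lower semicontinuity. Set $F(u) = \dist(\mathcal A(u),f)^p + \lambda\, R(u)$ and pick a sequence $(u^{(n)})$ in $\mathcal M^N$ with $F(u^{(n)}) \to \inf F$. Since $F \geq 0$, we may pass to a tail so that $F(u^{(n)}) \leq M$; in particular both $\dist(\mathcal A(u^{(n)}),f)$ and $R(u^{(n)})$ are bounded uniformly in $n$.

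The heart of the argument is showing that $\diam(u^{(n)})$ stays bounded. First, the triangle inequality applied coordinate-wise gives a uniform estimate $\diam(\mathcal A u^{(n)}) \leq 2\,\dist(\mathcal A(u^{(n)}),f) + \diam(f)$, so $\diam(\mathcal A u^{(n)})$ is bounded. The hypothesis on $\mathcal A$ then yields, for every pinning pair $(l_s,r_s)$,
\begin{equation*}
\dist(u^{(n)}_{l_s}, u^{(n)}_{r_s}) \;\leq\; C' \max\bigl(\diam(\mathcal A u^{(n)}),\, R(u^{(n)})\bigr) \;\leq\; \tilde C,
\end{equation*}
uniformly in $n$ and $s$. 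Now suppose toward a contradiction that $\diam(u^{(n_k)}) \to \infty$ along some subsequence. By the assumed property of $R$, applied with the constant $C = \tilde C$, we would have $R(u^{(n_k)}) \to \infty$, contradicting the uniform bound $R(u^{(n)}) \leq M/\lambda$. Hence $\diam(u^{(n)})$ is bounded.

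Once boundedness is in hand, $(u^{(n)})$ is contained in a bounded subset of $\mathcal M^N$. By geodesic completeness of $\mathcal M$ the Hopf--Rinow theorem guarantees that closed bounded sets are compact, so a subsequence $u^{(n_k)}$ converges to some $u^\star \in \mathcal M^N$. Lower semicontinuity of $R$ is part of the hypothesis, and lower semicontinuity of the data term $u \mapsto \dist(\mathcal A(u),f)^p$ follows from continuity of $\mathcal A$ together with continuity of $\dist$; this is a standing regularity assumption on the admissible imaging operators and is verified for the concrete $\mathcal A$ used in the paper (the identity and the intrinsic-mean convolutions from \eqref{eq:DefMatVecAnaManiIth}). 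Passing to the limit gives $F(u^\star) \leq \liminf_k F(u^{(n_k)}) = \inf F$, so $u^\star$ is a minimizer.

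The main obstacle is the coercivity step: the two hypotheses are engineered as a matching pair -- one bounds the pinning distances in terms of $\diam(\mathcal A u)$ and $R(u)$, the other says that bounded pinning distances together with a bounded $R$ preclude blow-up of $\diam(u)$ -- and recognizing that chaining them (via the contrapositive) transforms mere boundedness of the cost into boundedness of the sequence is the only nontrivial insight. The remaining pieces (triangle inequality, Hopf--Rinow, lower semicontinuity under convergence in $\mathcal M^N$) are routine.
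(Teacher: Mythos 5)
The paper does not reprove this statement---it imports it verbatim from \cite{storath2018variational}---so the only internal point of comparison is the closely analogous argument the authors spell out for Theorem~\ref{thm:ExistenceL0Sparse}. Your overall strategy (direct method: minimizing sequence, boundedness, Hopf--Rinow, lower semicontinuity) is the right one, and your coercivity chain for the \emph{diameter}---data term bounded $\Rightarrow$ $\diam(\mathcal A u^{(n)})$ bounded, hence pinning distances bounded, hence unbounded diameter would force $R(u^{(n)})\to\infty$, contradiction---is correct.

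The genuine gap is the sentence ``Once boundedness is in hand, $(u^{(n)})$ is contained in a bounded subset of $\mathcal M^N$.'' Bounded diameter does not imply containment in a bounded set: the tuple $u^{(n)}=(n,n+1)$ in $\mathcal M=\mathbb R$ has diameter $1$ but escapes to infinity. A minimizing sequence could in principle drift off as a tight cluster, and nothing in the two stated hypotheses rules this out, since both only control \emph{relative} distances within $u$ (the pinning pairs) or within $\mathcal A u$ (its diameter), never the absolute position of $u$. The missing step is exactly the one the paper carries out in the proof of Theorem~\ref{thm:ExistenceL0Sparse}: if $\dist(u^{(n)},\sigma)\to\infty$ while $\diam(u^{(n)})$ stays bounded, one places subsequential clusters into pairwise disjoint balls and invokes the locality of the admissible imaging operators (intrinsic means of points in a ball of radius $\rho$ lie in a concentric ball of radius $L\rho$; this is \cite[Lemma 2]{storath2018variational}) to conclude that $\mathcal A(u^{(n)})$ drifts along with $u^{(n)}$, whence $\dist(\mathcal A(u^{(n)}),f)^p\to\infty$, contradicting the bounded cost. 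Without appealing to this structural property of $\mathcal A$ the conclusion of your second paragraph does not follow from its premises. A smaller but related soft spot: you assert lower semicontinuity of $u\mapsto\dist(\mathcal A(u),f)^p$ ``from continuity of $\mathcal A$,'' but for the intrinsic-mean operators $\mathcal A$ is set-valued and not obviously continuous; the paper treats this as a separate lemma (\cite[Lemma 3]{storath2018variational}) rather than a triviality.
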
 
The proof of Theorem~\ref{thm:ExistenceCondR4SecondOrd} may be found in \cite{storath2018variational}.

\begin{proof}[Proof of Theorem~\ref{thm:ExistenceStandardProbLambda2equals0}]
		We apply Theorem~\ref{thm:ExistenceCondR4SecondOrd}. 
		The lower semicontinuity of the $\mathcal W_\lambda^{\alpha,q}$ regularizer is shown in Lemma~\ref{lem:WalphaQTermLSC}. 	
		Towards the other condition of Theorem~\ref{thm:ExistenceCondR4SecondOrd}, 
		let $u^{(k)}$ be  a sequence such that  $\diam(u^{(k)}) \to \infty,$ 
		and such that 
		$\dist(u^{(k)}_{n-e_i,0},u^{(k)}_{n,0}) \leq C' $ for some $C'>0,$ 
		all $n,i$ and all $k \in \mathbb N.$ 
		We have to show that  $\mathcal W_\lambda^{\alpha,q}(u^{(k)})\to \infty.$	
		Towards a contradiction, assume that there is a subsequence $u^{(k_l)}$ of $u^{(k)}$ and $C''>0$ such that $\mathcal W_\lambda^{\alpha,q}(u^{(k_l)}) \leq C'',$ 
		for all $l \in \mathbb N.$ 
		We show that there is a 
		constant $C'''>0$ such that $\dist(u^{(k_l)}_{n-e_i},u^{(k_l)}_{n-e_i}) \leq C''',$
		for all $n$ and $i.$
		Since $\mathcal W_\lambda^{\alpha,q}(u^{(k_l)}) \leq C'',$ and since 
		$\dist(u^{(k_l)}_{n-e_i,0},u^{(k_l)}_{n,0}) \leq C',$ by the definition of the details, there is a constant $C_1$ such that $\dist(u^{(k_l)}_{n-e_i,1},u^{(k_l)}_{n,1}) \leq C_1,$ for all $n,i$ and all $l \in \mathbb N.$ 
		Applying induction on the lever $r$ there is a constant 
		$C_R$ such that $\dist(u^{(k_l)}_{n-e_i,R},u^{(k_l)}_{n,R}) \leq C_R,$ 
		for all $n,i$ and all $l \in \mathbb N.$ Letting $C'''=C_R$ and noting that the finest level signal $u^{(k_l)}_{\cdot,R}$ equals $u^{(k_l)}$ yields that 
		$\dist(u^{(k_l)}_{n-e_i},u^{(k_l)}_{n-e_i}) \leq C''',$
		for all $n$ and $i.$
		This implies that $\diam(u^{(k)})$ is bounded which contradicts 	
		$\diam(u^{(k)}) \to \infty.$
		Hence, $\mathcal W_\lambda^{\alpha,q}(u^{(k)})\to \infty$  
		which allows us to apply Theorem~\ref{thm:ExistenceCondR4SecondOrd}
		and, in turn, to conclude the existence of minimizers.
		
		Concerning the case when $\mathcal A$ is the identity,
		we notice that $\dist(u_{n-e_i,0},u_{n,0}) \leq \diam (u)$
		which implies  the condition \eqref{eq:AddContraint4WavSp}.
		Hence, we may apply the already proved assertion to conclude the existence of minimizers 
		in case $\mathcal A$ is the identity. 
			
	    Concerning the case when the manifold $\mathcal M$ is compact
		we note that the condition 
		on $R$ in Theorem~\ref{thm:ExistenceCondR4SecondOrd}
	    is trivially fulfilled since the boundedness of $\mathcal M$ implies that no sequence in
	    $\mathcal M^N$ can have diameters tending to $\infty.$
	    Hence, we may apply Theorem~\ref{thm:ExistenceCondR4SecondOrd} to conclude the existence of minimizers for compact manifolds $\mathcal M.$
	    \end{proof}

Finally, we give an existence result for the 
variational problem \eqref{eq:VarProblemModelSec} of wavelet sparse regularization 
using $\ell^0$ type regularizing terms $\mathcal W_\lambda^0(u)$.

\begin{theorem}\label{thm:ExistenceL0Sparse}
	The variational problem \eqref{eq:VarProblemModelSec} of wavelet sparse regularization using 
    the $\ell^0$ type regularizing terms $\mathcal W_\lambda^0(u)$ of \eqref{eq:Defl0SparseMult} has a minimizer provided 
    $\mathcal A$ is an imaging operator such that there is a constant $C>0$ such that
    \begin{equation}\label{eq:AddContraint4WavSp0}
    \diam(u) \leq C  \ \diam (\mathcal A (u)).		
    \end{equation}
	In particular, the problem has a minimizer for $\mathcal A$ being the identity.
	Furthermore, the problem always has a solution, when the manifold $\mathcal M$ is compact.
\end{theorem}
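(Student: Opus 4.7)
The plan is to reduce to Theorem~\ref{thm:ExistenceCondR4SecondOrd} applied with $R=\mathcal W_\lambda^0$, choosing the finite pair list $(l_s,r_s)_{s=0}^{S}$ to enumerate \emph{every} pair of indices of the (finite) grid. Lower semicontinuity of $\mathcal W_\lambda^0$ has already been supplied by Lemma~\ref{lem:WnullQTermLSC}, so that ingredient of the theorem is in hand and need not be revisited.

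The step that differs from the proof of Theorem~\ref{thm:ExistenceStandardProbLambda2equals0} — and where the main obstacle lies — is the coercivity-type hypothesis on $R$. Unlike $\mathcal W_\lambda^{\alpha,q}$, the $\ell^0$ regularizer $\mathcal W_\lambda^0$ counts non-zero detail coefficients on a fixed finite grid and is therefore uniformly bounded above by $\lambda_1\#\{(n,r)\}+\lambda_2\#\{(n,i)\}$; in particular it cannot drive $R(u^{(n)})\to\infty$ along any sequence. The rescuing observation is that, with our exhaustive choice of pairs, the hypothesis $\dist(u^{(n)}_{l_s},u^{(n)}_{r_s})\leq C$ for all admissible $s$ is equivalent to $\diam(u^{(n)})\leq C$, which is incompatible with the simultaneously required $\diam(u^{(n)})\to\infty$. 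The implication demanded by Theorem~\ref{thm:ExistenceCondR4SecondOrd} is thus vacuously true.

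For the condition on the operator, I would use \eqref{eq:AddContraint4WavSp0} together with the trivial bound $\dist(u_{l_s},u_{r_s})\leq \diam(u)$ to obtain
\begin{equation*}
\dist(u_{l_s},u_{r_s})\ \leq\ \diam(u)\ \leq\ C\,\diam(\mathcal A(u))\ \leq\ C\,\max\bigl(\diam(\mathcal A(u)),\,\mathcal W_\lambda^0(u)\bigr)
\end{equation*}
for every chosen pair, exactly matching what Theorem~\ref{thm:ExistenceCondR4SecondOrd} requires; a minimizer then exists. The identity case follows with $C=1$, since $\diam(u)=\diam(\mathcal A(u))$ holds trivially.

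For compact $\mathcal M$, I would argue directly rather than through Theorem~\ref{thm:ExistenceCondR4SecondOrd}: $\mathcal M^N$ is sequentially compact, the data term is continuous in $u$, and $\mathcal W_\lambda^0$ is lower semicontinuous by Lemma~\ref{lem:WnullQTermLSC}, so the objective attains its infimum on a compact set. (Equivalently, no sequence in bounded $\mathcal M$ has $\diam(u^{(k)})\to\infty$, so Theorem~\ref{thm:ExistenceCondR4SecondOrd} applies vacuously for any $\mathcal A$.) The only real conceptual hurdle, as noted above, is recognizing that the burden of coercivity has to be shifted entirely to the operator hypothesis \eqref{eq:AddContraint4WavSp0}; once this is seen, the remaining verifications are routine.
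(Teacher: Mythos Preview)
Your proof is correct and takes a genuinely different route from the paper. The paper does \emph{not} invoke Theorem~\ref{thm:ExistenceCondR4SecondOrd} here; instead it argues directly that the full objective $\mathcal F(u)=\dist(\mathcal A(u),f)^p+\mathcal W_\lambda^0(u)$ is lower semicontinuous and coercive. Coercivity is obtained by contradiction: if $\dist(u^{(k)},\sigma)\to\infty$ while the data term stays bounded, then $\diam(\mathcal A(u^{(k)}))$ is bounded, hence by \eqref{eq:AddContraint4WavSp0} so is $\diam(u^{(k)})$; the paper then extracts a subsequence lying in pairwise disjoint balls and uses a boundedness lemma for the intrinsic-mean operator to conclude that $\mathcal A(u^{(k_l)})$ also lies in disjoint balls, forcing the data term to diverge.

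Your reduction is shorter and cleaner: by letting the pair list in Theorem~\ref{thm:ExistenceCondR4SecondOrd} exhaust all index pairs, the hypotheses $\diam(u^{(n)})\to\infty$ and $\max_s\dist(u^{(n)}_{l_s},u^{(n)}_{r_s})\leq C$ are mutually exclusive, so the coercivity requirement on $R$ is vacuous, and \eqref{eq:AddContraint4WavSp0} immediately yields the operator condition. This buys brevity and makes transparent that the $\ell^0$ regularizer contributes nothing to coercivity. The paper's argument, by contrast, is self-contained at the level of the energy and exhibits explicitly the mechanism (the disjoint-ball construction together with the mean-boundedness lemma) that is otherwise hidden inside the black box Theorem~\ref{thm:ExistenceCondR4SecondOrd}. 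One minor imprecision in your compact-case sketch: the data term is in general only lower semicontinuous (the mean may be set-valued), not continuous; this does not affect the argument.
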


\begin{proof}
	We consider the functional 
	$\mathcal F (u) := \dist(\mathcal A(u),f)^p +  \mathcal W_\lambda^0(u).$	 
	The regularizing term $\mathcal W_\lambda^0(u)$ is lower semicontinuous by Lemma~\ref{lem:WnullQTermLSC} 
	 and the data term is lower semicontinuous
	by \cite[Lemma 3]{storath2018variational}. 
	Together, the energy $\mathcal F$ is lower semicontinuous.
	Assuming the condition \eqref{eq:AddContraint4WavSp0},
	we show the coercivity of $\mathcal F,$
	 i.e.,
	 we show that, for $\sigma \in \mathcal M^N$ and any sequence $u^{(k)}$ in $\mathcal M^N,$
	 $\dist(u^{(k)},\sigma) \to \infty$ as  $k \to \infty$ implies $\mathcal F(u^{(k)}) \to \infty$ 
	 as $k \to \infty$.
	 	Towards a contradiction suppose that $\mathcal F$ is not coercive. 
	 	Then there is $\sigma \in \mathcal M^N$ and a sequence $u^{(k)}$ in
	 	$\mathcal M^N,$ such that  
	 	$\dist(u^{(k)},\sigma) \to \infty$
	 	and such that $\dist(\mathcal A(u^{(k)}),f)^p$ is bounded
	 	(by passing to a subsequence if necessary.)
	 Then $\diam (\mathcal A (u^{(k)}))$ is bounded.	 
	 By assumption \eqref{eq:AddContraint4WavSp0}, $\diam (u^{(k)})$ is then bounded as well,
	 i.e., there is a constant  $C'>0$ such that $\diam (u^{(k)})< C'$ for all $k \in \mathbb N.$

	 Since $\dist(u^{(k)},\sigma) \to \infty$ and $\diam (u^{(k)})$ is bounded we find for any given (ball radius) $C''>C'$ a subsequence (indexed by $k_l$) of closed, disjoint balls $B_l$ such that  
	 all members of $u^{(k_l)}$ are contained in $B_l,$ i.e.,
	 \begin{align}\label{eq:DisjointBall4L0}
		 \left\{u^{(k_l)}_i : i = 1,\ldots,N \right\} \subset B_l,
	 	 \quad B_l \cap B_{l'} = \emptyset \quad \text {for all } l,l' \in \mathbb N. 
	 \end{align}
	 We next define a suitable (ball radius) $C''.$ By \cite[Lemma 2]{storath2018variational},
     since $u^{(k_l)}$ is contained in a ball of radius $C',$ then there is a constant $L\geq 1$(which depends on the weights in $\mathcal A$) such that $\mathcal A (u^{(k)})_j$ is contained in  
     in a ball of radius $L C'.$ Accordingly, we choose $C'' := L \ C'$ and consider  
     the balls $B_l$ of \eqref{eq:DisjointBall4L0} of radius $C''.$
     Then,
     \begin{align}\label{eq:DisjointBall4L0}
     \left\{\mathcal A(u^{(k_l)}_j) : j = 1,\ldots,K \right\} \subset B_l,
      \quad B_l \cap B_{l'} = \emptyset \quad \text {for all } l,l' \in \mathbb N. 
     \end{align}  
     As a result, $\dist(\mathcal A(u^{(k_l)}),f)^p \to \infty$ which contradicts the assumption.
	Hence, the energy is coercive which, together with its lower semicontinuity, yields the existence of minimizers.
	
	If $\mathcal A$ is the identity,
	the condition \eqref{eq:AddContraint4WavSp0} is obviously fulfilled.
	Hence, we may apply the already proved assertion to conclude the existence of minimizers. 
	If the manifold $\mathcal M$ is compact, the functional is automatically coercive which together 
	with its lower semicontinuity already observed above, yields the existence of minimizers
	for compact manifolds.	
\end{proof}

\section{Algorithms}\label{sec:algorithm}

In the following we derive algorithms for the manifold valued variational problem 
\eqref{eq:VarProblemModelSec} of wavelet regularization using the regularizers 
$\mathcal W_\lambda^{\alpha,q}(u)$ of \eqref{eq:DefWavRegMult}
and 
$\mathcal W_\lambda^0(u)$ of \eqref{eq:Defl0SparseMult}.
We consider generalized forward-backward as well as cyclic proximal point algorithms. 
These basic algorithmic structures are presented and applied to the considered variational problems in Section \ref{sec:AlgoStr}. 
The differential geometric details to implement these structures for the present problems are given in 
Section~\ref{sec:DiffGeoDerivations}.
In particular, we there explain how to compute the derivatives and proximal mappings of the atoms of the involved wavelet regularizers.

\subsection{Algorithmic Structures}\label{sec:AlgoStr}

We denote the functional in \eqref{eq:VarProblemModelSec} by $\mathcal F$ and decompose both the data term $\mathcal D$ and the regularizer $\mathcal W_\lambda(u)=\mathcal W_\lambda^{\alpha,q}(u)$ or
$\mathcal W_\lambda(u)=\mathcal W_\lambda^0(u)$ into atoms  $D_i$ and  $W_{n,r},$ i.e., we let
\begin{align}
\mathcal F(u) = \mathcal D (u) + \ \mathcal W_\lambda(u)  =  \sum\nolimits_{i=1}^K  D_i (u) +  \lambda \sum\nolimits_{r=0}^R \sum\nolimits_{n}  W_{n,r}(u)
\end{align}
with 
\begin{align}\label{eq:DataTermDi}
 D_i (u):=   \dist(\mathcal A(u)_i,f_i)^p, 	
\end{align}
and, for level $r \geq 1$, 
\begin{align}\label{eq:SplittingGeneralLevel}
 W_{n,r}(u) = 
 \begin{cases}
 \lambda_1 \cdot 2^{r q \ \left(\alpha+\tfrac{s}{2}-\tfrac{s}{q} \right)}\| d_{n,r}(u) \|^q_{\hat u_{n,r}}, &  
 \text{ if } q \geq 1, \\
 \lambda_1  \ \# \  \{d_{n,r}(u) \neq 0\}, 
 &  \text{ if }  q = 0.
 \end{cases}
\end{align}
For level $r=0,$ we consider extended multiindices $\tilde n = (n,i)$ and use the notation
\begin{align}\label{eq:SplittingZeroLevel}
W_{\tilde n,0}(u) = W_{(n,i),0}(u) = 
\begin{cases}
\lambda_2 \cdot  \dist(\tilde u_{n-e_i,0},\tilde u_{n,0})^q 
&  \text{ if } q \geq 1 \\
\lambda_2 \ \# \  \{\tilde u_{n-e_i,0} \neq \tilde u_{n,0}  \} 
&  \text{ if }  q = 0
\end{cases}
\end{align}
We note that the meaning of the employed symbols is explained near \eqref{eq:DefWavRegMult} and
\eqref{eq:Defl0SparseMult}.
We will omit the tilde notation of \eqref{eq:SplittingZeroLevel} in the following keeping in mind that the index $n$ in $W_{n,0}$ encodes a zeroth level index together with a direction whereas the $n$ in $W_{n,r},$ $r \geq 1,$ encodes an $r$th level index.

In the following we will employ the above decomposition into atoms within the proposed algorithms. 
More precisely, for each atom we will either apply a gradient step (which is possible for differentiable terms) or its proximal mapping within an iterative scheme.
Here, the  proximal mappings \cite{moreau1962fonctions, ferreira2002proximal, azagra2005proximal} of a function $f$ on a manifold  $\mathcal{M^N}$ is given by
\begin{align} \label{eq:prox_mapping_abstract}
\prox_{\mu f} x = \argmin_y f(y) + \frac{1}{2 \mu} \dist(x,y)^2, \qquad \mu > 0.  
\end{align}
For general manifolds, the proximal mappings \eqref{eq:prox_mapping_abstract} are not globally defined, and the minimizers are not unique in general, at least for possibly far apart points; cf. \cite{ferreira2002proximal, azagra2005proximal}.
This is a general issue in the context of manifolds that are -- in a certain sense -- a local concept
involving objects that are locally well defined. In case of ambiguities, a possibility is to consider the above objects as set-valued quantities.

\paragraph{Generalized Forward Backward Scheme.} 

In \cite{baust2016combined}, we have proposed a generalized forward backward algorithm for DTI data with a voxel-wise indirect data term. 
In \cite{storath2018variational}, we have employed these schemes for the variational regularization of inverse problems for manifold-valued signals. To improve the method, we have proposed a variant based on a trajectory method together with a Gau\ss-Seidel type 
update strategy as well as a stochastic variant of the generalized forward-backward scheme there.
We explain the ideas and apply the schemes to our regularizers.

The basic approach using a generalized forward-backward scheme is to decompose the considered functional $\mathcal F$ into two summands  $\mathcal F(u) = \mathcal F^1(u) +  \mathcal F^2(u)$ 
where the one summand $\mathcal F^1$ is differentiable, and the  other summand $\mathcal F^2$
is further decomposed into atoms via $\mathcal F^2(u) = \sum_k \mathcal F_k^2(u).$ 
The purpose is to find a decomposition such that, for each of these atoms $\mathcal F_k^2$, 
one has rather simple means to compute the proximal mappings.
A generalized forward-backward scheme then performs an (explicit) gradient step for the  differentiable term $\mathcal F^1$, as well as an (implicit) proximal mapping step for each atom $\mathcal F_k^2$ of the term $\mathcal F^2.$
One instantiation in our situation of wavelet regularization is 
\begin{align}\label{eq:IntstGFB1}
  \mathcal F^1 = \mathcal D \quad \text{ (with $p>1$) }, \quad  
  \text{and} \quad 
  \mathcal F^2 = 
  \begin{cases}
  \mathcal W_\lambda^{\alpha,q}(u),   \ q\geq 1,  \ \text{ or }  \\
  \mathcal \mathcal W_\lambda^0(u),\\ 
  \end{cases}
  \text{ with atoms }	
  W_{n,r},
\end{align}
where the $ W_{n,r}$ are given by \eqref{eq:SplittingGeneralLevel} and \eqref{eq:SplittingZeroLevel}, respectively.
We note that, for $p>1$,  the data terms $\mathcal D$ are differentiable.
Another possible instantiation in our situation is 
\begin{align}\label{eq:IntstGFB2}
\mathcal F^1 = 
\mathcal W_\lambda^{\alpha,q}(u),   \quad \text{ (with $q>1$) }, \quad  
\text{and} \quad 
\mathcal F^2 = \mathcal D \quad \text{ (with $p\geq 1$) }, 
\text{ with atoms }	
D_i, 
\end{align}
where the $D_i$ are given by \eqref{eq:DataTermDi}.

If the differentiable term $\mathcal F^1$ also allows for an additive decomposition of the form 
$\mathcal F^1(u) = \sum_{k'} \mathcal F_{k'}^1(u)$ (which is the case in our examples above)
the computation of the gradient of $\mathcal F^1$ may be interpreted as 
computing the gradients of the atoms $\mathcal F_{k'}^1(u)$ and then to apply 
a Jacobi type update. Instead, one may use a Gau\ss-Seidel type 
update strategy instead of the Jacobi type update.
This means that the result of computing the gradient of $\mathcal F_{k'}^1$ is already used as an iterate when computing the gradient of $\mathcal F_{k'+1}^1.$
The advantage of the Gau\ss-Seidel type 
update strategy is that it is amenable for a trajectory method which is crucial for avoiding unreasonably small step sizes of the overall algorithm which limits its performance.
(For a detailed discussion of these step size issues and its resolution we refer 
to the authors' previous work \cite{ storath2018variational}.)
Instead of computing only one (potentially large) gradient step,
the trajectory method applied to an atom $\mathcal F_{k'}^1$ computes 
several smaller steps. More precisely, we 
define the trajectory operator $\traj_\mu \mathcal F_{k'}^1$ 
applied to the atom $\mathcal F_{k'}^1$ 
for input $x_0$ (which may be thought of as the result for $\mathcal F_{k'-1}^1$) 
as the output $x$ of the following algorithm, i.e.,
$x=\traj_\mu \mathcal F_{k'}^1(x_0)$ is given by  
\begin{align}\notag
\text{ Iterate} & \text{ w.r.t.\ $r$ until $\tau \geq 1$ :} \\
& x_r :=   x_{r-1} +   \tau_{r-1}  \mu \nabla \mathcal F_{k'}^1(x_{r-1}); \quad  \label{eq:TrajMeth} 
\tau :=  \sum\nolimits_{l=0}^{r-1} \tau_{l} 
\\
x=   x_{r}&_{-1}  +   \left(1 - \sum\nolimits_{l=0}^{r-2} \tau_{l}\right)  \mu \nabla \mathcal F_{k'}^1(x_{r-1}) \notag
\end{align}  
Here, $\tau_{r-1}$ is a predicted step size for the gradient step at $x_{r-1}$
(which may be realized by a line search yielding a parameter with minimal functional value along the line.)
Further $\mu>0$ is a parameter.
This scheme is inspired by solving initial value problems for ODEs.   
Instead of using a straight line we follow a polygonal path normalized by evaluating it at ``time'' 
$\tau=1$. 
Using this trajectory method instead of a gradient step, we obtain the following
generalized forward backward algorithm with Gau\ss-Seidel type update scheme.
\begin{align}\notag
\text{ Iterate} & \text{ w.r.t.\ $n$ :} \\
1. &\text{ Compute } u^{(n+k'/2K')} =  \traj_{\mu_n} \mathcal F_{k'}^1 \left(u^{(n + (k'-1)/2K')}\right) \qquad  \text{ for all $k' =1,\ldots,K'$ }; \label{eq:algFBwithGSupdateAndTraj} \\  
2.  &\text{ Compute } u^{(n+0.5+ k/2K'')} =  \prox_{\mu_n \lambda \mathcal F_{k}^2} u^{(n+0.5+ (k-1)/2K'')}, \qquad  \text{ for all $k =1,\ldots,K''.$} \qquad \qquad \notag 
\end{align}

As explained above $\traj_\mu \mathcal F_{k'}^1(\cdot)$ denotes the application of the trajectory method defined by \eqref{eq:TrajMeth}.
During the iteration, the positive parameter $\mu_n$ is decreased in a way such that $\sum_n \mu_n = \infty$ and  
such that $\sum_n \mu_n^2 < \infty.$ 

 We get the generalized forward backward algorithm for wavelet regularization (including wavelet sparse $\ell^0$ and $\ell^1$ regularization) for data terms with $p>1$ by the scheme \eqref{eq:algFBwithGSupdateAndTraj} with 
\begin{align}\label{eq:IntstGFB1developed}
\mathcal F_{k'}^1 = D_{k'} \quad \text{ (with $p>1$) }, \quad  
\text{and} \quad 
\mathcal F_{k}^2 =  \mathcal F_{k(n,r)}^2 = W_{n,r},
\end{align}
where the $D_{k'}$ are given by \eqref{eq:DataTermDi}, and
where the $ W_{n,r}$ are given by \eqref{eq:SplittingGeneralLevel} and \eqref{eq:SplittingZeroLevel}, respectively.
The index mapping $k=k(n,r)$ serializes the pyramid like data of the wavelet scheme.
Equation \eqref{eq:IntstGFB1developed} is an adaption of the instantiation \eqref{eq:IntstGFB1} above.
The corresponding adaption \eqref{eq:IntstGFB2} above is to apply the scheme \eqref{eq:algFBwithGSupdateAndTraj} with 
\begin{align}\label{eq:IntstGFB2developed}
\mathcal F_{k'}^1 = F_{k'(n,r)}^1 =  W_{n,r},   \quad \text{ (with $q>1$) }, \quad  
\text{and} \quad 
\mathcal F_{k}^2 = D_{k} \quad \text{ (with $p\geq 1$) }. 
\end{align}
As above, where the $D_k$ are given by \eqref{eq:DataTermDi}, 
the $ W_{n,r}$ are given by \eqref{eq:SplittingGeneralLevel} and \eqref{eq:SplittingZeroLevel},
and $k'=k'(n,r)$ is the index mapping which serializes the pyramid like data of the wavelet scheme.

\paragraph{The Cyclic Proximal Point Scheme.}

Further, we consider proximal point algorithms which may also be used in the case $p=1$ and $q=1.$ 
A reference for cyclic proximal point algorithms in vector spaces is \cite{Bertsekas2011in}.
In the context of Hadamard spaces, the concept of CPPAs was developed by \cite{bavcak2013computing}, where it is used to compute means and medians.
In the context of variational regularization methods for nonlinear, manifold-valued data,
they were first used in \cite{weinmann2014total} and then later in various variants in 
\cite{bergmann2014second,bavcak2016second,bredies2017total}. 
The basic idea of CPPAs is to compose the considered functional $\mathcal F$ into 
basic atoms and then to compute the proximal mappings of each of the atoms in a cyclic, iterative way.
In the above notation we have the algorithm
\begin{align}\notag
\text{ Iterate} & \text{ w.r.t.\ $n$ :} \\
1. &\text{ Compute } u^{(n+k'/2K')} =  \prox_{\mu_n \mathcal F_{k'}^1}u^{(n + (k'-1)/2K')} \qquad  \text{ for all $k' =1,\ldots,K'$ }; \label{eq:algCPPA} \\  
2.  &\text{ Compute } u^{(n+0.5+ k/2K'')} =  \prox_{\mu_n \lambda \mathcal F_{k}^2} u^{(n+0.5+ (k-1)/2K'')}, \qquad  \text{ for all $k =1,\ldots,K''.$} \qquad \qquad \notag 
\end{align} 

As above, the parameters $\mu_n$ are chosen such that $\sum_n \mu_n = \infty$ and such that $\sum_n \mu_n^2 < \infty.$ 

We implement the cyclic proximal point algorithm for wavelet regularization by using  \eqref{eq:IntstGFB1developed} in \eqref{eq:algCPPA}.
We note that we in particular use \eqref{eq:algCPPA} when both the exponent $p$ of the power of the distance in the data term $\mathcal D$ equals one and the exponent $q$ in $\mathcal W_\lambda^{\alpha,q}(u)$ of \eqref{eq:DefWavRegMult} equals one, or 
when we consider $p=1$ together with the regularizer
$\mathcal W_\lambda^0(u)$ of \eqref{eq:Defl0SparseMult},i.e.,
when we consider the analogues of wavelet $\ell^1$ or $\ell^0$ sparse  regularization 
with an $\ell^1$ type data term.

Often the considered functionals are not convex; hence the convergence to a globally optimal solution cannot be ensured. Nevertheless, as will be seen in the numerical experiments section, we experience a good convergence behavior in practice.
This was also observed in previous works such as  \cite{bergmann2014second,bavcak2016second} where the involved manifold valued functionals are not convex either.

\paragraph{Potential for Parallelization.}
We note that the algorithms given by the application of \eqref{eq:IntstGFB1developed} and \eqref{eq:IntstGFB2developed} in 
\eqref{eq:algFBwithGSupdateAndTraj} and \eqref{eq:algCPPA} are in many situations massively parallelizable with minor modifications.
Concerning the wavelet terms 
$\mathcal W_\lambda^{\alpha,q}(u)$ of \eqref{eq:DefWavRegMult}
and 
$\mathcal W_\lambda^0(u)$ of \eqref{eq:Defl0SparseMult}
which the $W_{n,r}(u)$ are based on, we notice that each item $u$ directly contributes to at most two scales $r,$ and $r+1.$ On the finer scale $r+1,$ it only contributes to one member $d_{n,r+1}.$ 
Further, on the coarser scale $r$ it directly contributes to at most $A$  members $d_{n,r+1},$
where $A$ is the number of elements in the support of the mask $a$ of the subdivision scheme $\mathrm{S}$
 given in \eqref{eq:defSubdivOperator}.
Hence, any two $W_{n,r},$ $W_{n',r'}$ with $|r-r'|>1$  may be computed in parallel.
Further,  $W_{n,r}, W_{n',r}$ (which corresponds to to the case $r'=r$) can be computed in parallel
whenever $|n-n'| \geq A'$ where $A'$ is the smallest integer such the the support of the mask  $a$ 
is contained in $\{-A',\ldots, A'\}^s,$ where $s$ is the dimension of the multivariate domain.
Finally, $W_{n,r}, W_{n',r+1}$ (which corresponds to to the case $r'=r+1$) can be computed in parallel
whenever $|n'-2n|\geq A',$ with the same $A'$ as in the previous line.
The computations for the data term $\mathcal D$ is parallelizable as well;
for details we refer to the corresponding discussion in \cite{storath2018variational}.  
The resulting minor modification in the algorithms 
resulting from the parallelization of the wavelet term and the data term 
then consists of another order of applying the operations 
$\traj_{\mu_n} \mathcal F_{k'}^1$
and
$\prox_{\mu_n \lambda \mathcal F_{k}^2}, $ 
or the operations 
$\prox_{\mu_n \mathcal F_{k'}^1}$
and
$\prox_{\mu_n \lambda \mathcal F_{k}^2}, $ 
within the cycle $n.$

\subsection{Differential geometric derivation of the relevant gradients and the proximal mappings.}
\label{sec:DiffGeoDerivations}

We here provide differential geometric expressions for the gradients and proximal mappings of the atoms
derived and employed in the algorithms of Section~\ref{sec:AlgoStr}.
In particular, we provide a derivation of the gradients and proximal mappings of the 
atoms of the wavelet regularizers 
$\mathcal W_\lambda^{\alpha,q}(u)$ of \eqref{eq:DefWavRegMult}
including the analogue of $\ell_1$ sparse wavelet regularization. 
Further, for the atoms of 
$\mathcal W_\lambda^0(u)$ of \eqref{eq:Defl0SparseMult},
which is the analogue of $\ell_0$ sparse wavelet regularization,
we provide means to compute the proximal mappings of the corresponding atoms.

\paragraph{Gradients of the atoms of the wavelet regularizers $\mathcal W_\lambda^{\alpha,q}(u)$.}

We here derive expressions for the gradients of the atoms $W_{n,r}$ of $\mathcal W_\lambda^{\alpha,q}$ of \eqref{eq:DefWavRegMult}
given in \eqref{eq:SplittingGeneralLevel} and \eqref{eq:SplittingZeroLevel}.

We first consider the zeroth level $n=0,$ and see how to compute the gradient of the corresponding mappings given by 
\begin{align}\label{eq:atomWavExplCalc}
u' \mapsto W_{\tilde n,0}(u') = W_{(n,i),0}(u') = 
\lambda_2 \cdot  \dist(\tilde u'_{n-e_i,0},\tilde u'_{n,0})^q.
\end{align} 
Hence, we have to compute the gradient of the $p$th power of the distance mapping 
$
d: y \mapsto \dist (y,f)^p,
$
$f \in \mathcal M.$
This gradient is given by (cf. \cite{afsari2011riemannian})  
$
\nabla d(y) =  \tfrac{1}{q}\|\exp^{-1}_y(f)\|^{q-2} \exp^{-1}_y(f).
$
Applied to our situation in \eqref{eq:atomWavExplCalc}, we have
\begin{align}\label{eq:tgv_manifoldParallelUnivAlgoProxDist}
\nabla_{W_{\tilde n,0}} (u)_{n',0}=  
\begin{cases}
\tfrac{1}{q}\|\exp^{-1}_{\tilde u_{n-e_i,0}}(\tilde u_{n,0})\|^{q-2} 
\exp^{-1}_{\tilde u_{n-e_i,0}}(\tilde u_{n,0}),    &\text{if } n' = n-e_i,  \\
\tfrac{1}{q}\|\exp^{-1}_{\tilde u_{n,0}}(\tilde u_{n-e_i,0})\|^{q-2} 
\exp^{-1}_{\tilde u_{n,0}}(\tilde u_{n-e_i,0}),    &\text{if } n'= n,    \\
u_{n'}  ,              &\text{else}.  
\end{cases}
\end{align}

Concerning a general level $r \neq 0,$ according to \eqref{eq:SplittingGeneralLevel}, we have to compute the gradients of the mappings  
\begin{align}\label{eq:proxWavGenLevFun}
u \mapsto W_{n,r}(u) = 
\lambda_1 \cdot 2^{r q \ \left(\alpha+\tfrac{s}{2}-\tfrac{s}{q} \right)} \ \| d_{n,r}(u) \|^q_{\hat u_{n,r}}.   
\end{align} 
$q \geq 1.$ 
By the definition of $d_{n,r}$ in \eqref{eq:DefDnr} and the properties of the exponential mapping we have that 
\begin{equation}
\| d_{n,r}(u) \|^q_{\hat u_{n,r}} =  2^{-s r/2} \ \dist(\mathrm{S} \tilde u_{n,r-1},\tilde u_{n,r})^q =
2^{-s r/2} \ \dist(\mean(a_{n-2\cdot}, \tilde u_{\cdot,r-1}),\tilde u_{n,r})^q.
\end{equation}
Hence, in order to obtain the gradient of the mapping in \eqref{eq:proxWavGenLevFun} w.r.t.\ the argument $u,$
we have to derive the gradients of the mappings 
\begin{align}
  u_l \mapsto  W_{n,r}(u) = \lambda_1 \cdot 2^{r q\ \left(\alpha+\tfrac{s}{2}-\tfrac{s}{q}\right)} 2^{-s r q/2}
  \dist(\mean(a_{n-2\cdot},\tilde u_{\cdot,r-1}),\tilde u_{n,r})^q,     	
\end{align}
for all indices $l$ of $u.$ For the index corresponding to $\tilde u_{n,r},$
i.e., the index $l_0$ for which $u_{l_0} = \tilde u_{n,r},$ we get by applying the above formula for the gradient of the distance function that
\begin{equation}\label{eq:derWnrfornr}
	\nabla_{W_{n,r}} (u)_{n,r} = 
	\tfrac{\lambda_1}{q} \cdot 2^{r q \ \left(\alpha-\tfrac{s}{q}\right)} 
	\|\exp^{-1}_{\tilde u_{n,r}}\mean(a_{n-2\cdot}, \tilde u_{\cdot,r-1}) \|^{q-2} 
	\exp^{-1}_{\tilde u_{n,r}} \mean(a_{n-2\cdot}, \tilde u_{\cdot,r-1})   ,    
	\end{equation}
Here, the lower index ${n,r}$ denotes the component of the gradient corresponding to the variable 
$\tilde u_{n,r}$ which corresponds to the  $l_0$th index of $u.$

As the major task, we have to compute the gradient of $W_{n,r}$ w.r.t.\ the variables $\tilde u_{\cdot,r-1}.$
To this end, we notice that the mapping  $ \tilde u_{k_0,r-1} \mapsto W_{n,r}(u)$ is the concatenation of the mean mapping
\begin{align}\label{eq:MapToDeriveGrad}
M: \tilde u_{k_0,r-1} \mapsto \mean(a_{n-2\cdot}, \tilde u_{\cdot,r-1}) = \argmin_{x \in \mathcal M}  \ \sum\nolimits_k  a_{n-2k} \ \dist(x,\tilde u_{k,r-1})^2 
\end{align}
and the $q$th power of the distance mapping. Hence, by the chain rule and the rules of transposition,
the gradient of $W_{n,r}$ w.r.t.\ $\tilde u_{k_0,r-1}$ is given by the application of the
adjoint of the differential of the mapping $M$ of \eqref{eq:MapToDeriveGrad}
to the gradient of the distance function already presented above. 
So, let us consider the adjoint of the differential of the mapping $M$ of \eqref{eq:MapToDeriveGrad}.
The point here is that the mapping $M$ may not be written down explicitly, since the function value itself is the minimizer of a corresponding minimization problem which cannot be explicitly solved. However, we have found a rather explicit representation of the differential of the mean mapping (and its adjoint) as a function of the points in \cite{storath2018variational}. We here provide some explanation and notation and recall the corresponding statement in the notation of the present work then.
We use the fact that every (weighted) mean $M(x_1,\ldots,x_K)$ of $K$ points $x_1,\ldots,x_K,$ is a zero of the mapping $\mathcal W,$ i.e.,
\begin{align}\label{eq:ZeroVecField}
\mathcal W (x_1,\ldots,x_K,M(u_1,\ldots,u_K)) = 0, 	
\end{align} 
where the mapping $\mathcal W: \mathcal M ^{K+1} \to \mathcal{TM}$ is defined as 
$
\mathcal W (x_1,\ldots,x_K,m) = \sum_{j=1}^K a_{i-2j}  exp^{-1}_m(x_j). 	
$
Here, $\mathcal{TM}$ denotes the tangent bundle of $\mathcal M.$
As a mapping of the argument $m,$ $\mathcal W$ is a tangent vector field.
(If there is a unique mean $M(x_1,\ldots,x_K)$ of $x_1,\ldots,x_K,$  \eqref{eq:MapToDeriveGrad} it is characterized by \eqref{eq:ZeroVecField}. In case of non-uniqueness, the characterization holds at least locally.)
In \cite{storath2018variational}, we use the notation $\mathcal W'$ for the function
\begin{align}\label{eq:ZeroVecFieldDerRightHs}
\mathcal W' : (x_1,\ldots,x_K) \mapsto \mathcal W (x_1,\ldots,x_K,M(x_1,\ldots,x_K)).
\end{align} 
and calculate the derivative of $\mathcal W',$ which equals zero in view of \eqref{eq:ZeroVecField} and so obtain a representation of the derivative of $M.$
To formulate this representation we need the linear mappings $R_{k_0}$ and $L,$ which are defined in terms of Jacobi fields. For defining $R_{k_0}$ we consider
the Jacobi fields along the geodesic 
$\gamma$ connecting $\gamma(0) =m= \mean(a_{n-2\cdot}, \tilde u_{\cdot,r-1}) $ and 
$\gamma(1)= \tilde u_{k_0,r-1}.$
As intermediate step, the mapping $r_{k_0}$ is given by the boundary to initial value mapping 
\begin{align}\label{eq:Littlerjay}
r_{k_0}: \quad    \mathcal{TM}_{\tilde u_{k_0,r-1}} \to \mathcal{TM}_{m}, \quad J(1) \mapsto \frac{D}{dt} J (0),     
\end{align}
where the $J$ are the Jacobi fields with $J(0)=0$ which parametrize $\mathcal{TM}_{\tilde u_{k_0,r-1}}$ via the point evaluation mapping $J \mapsto J(1).$  
We note that this mapping is well-defined for non conjugate points which is the case for close enough points.
We then let 
\begin{align}\label{eq:DefRk}
R_{k_0} w :=  a_{n-2{k_0}} 	r_{k_0}  w,    
\end{align}
for tangent vectors $w$ sitting at the point $\tilde u_{k_0,r-1}.$
Next, we define the mapping $L.$
To this end, let $\gamma_k$ be the geodesic connecting 
$\gamma_k (0) = m=\mean(a_{n-2\cdot}, \tilde u_{\cdot,r-1})$ 
and $\gamma_k(1)= \tilde u_{k,r-1} ,$ for the indices $k$ 
in the $r-1$th level for which the weight is nonzero.
For each geodesic $\gamma_k,$ we consider the Jacobi fields $J_k$ with $J_k(1) = 0$
and the mappings
\begin{align}\label{eq:littlelJay}
l_k:\    \mathcal{TM}_{m} \to \mathcal{TM}_{m}, \quad J_k(0) \mapsto \frac{D}{dt} J_k (0)
\end{align}
where the Jacobi fields $J_k$ (with $J_k(1)=0$) parametrize $\mathcal{TM}_{m}$ 
via the evaluation map $J_k \mapsto J_k(0).$  
Again, we note that this mapping is well-defined for non conjugate points which is the case for close enough points. Then, we let
\begin{align}\label{eq:DefL}
L \ v := \sum\nolimits_k a_{n-2k} \ l_k \ v,
\end{align}
for tangent vectors $v$ in the point $m=\mean(a_{n-2\cdot}, \tilde u_{\cdot,r-1}).$
We have now gathered all information to formulate \cite[Theorem 11]{storath2018variational} 
adapted to the notation of the present work.
\begin{theorem}\label{thm:ComputeDerivativeOfM}
	The derivative of the intrinsic mean mapping w.r.t. the input variable $\tilde u_{k_0,r-1}$ in direction $w$ at $\mathcal{TM}_{\tilde u_{k_0,r-1}}$ is given by  
	\begin{align}\label{eq:thmEq2}
	\partial_{\tilde u_{k_0,r-1}}M(u) \ w  =  - L^{-1} R_{k_0} w
	\end{align}	 
	where the linear mapping $L$ is given by \eqref{eq:DefL},
	and the linear mappings $R_{k_0},$ is given by \eqref{eq:DefRk}. 	 
\end{theorem}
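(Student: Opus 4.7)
The plan is to apply an implicit-function-type argument to the vector-field identity $\mathcal W'(x_1,\ldots,x_K) = \mathcal W(x_1,\ldots,x_K, M(x_1,\ldots,x_K)) = 0$ from \eqref{eq:ZeroVecField}, which holds identically (locally, in the regime where $M$ is a unique intrinsic mean). Differentiating this identity with respect to $\tilde u_{k_0,r-1}$ in an arbitrary direction $w \in \mathcal{TM}_{\tilde u_{k_0,r-1}}$ and using the chain rule on the composite argument yields the constraint
\begin{equation*}
\partial_{x_{k_0}} \mathcal W \cdot w \ + \ \partial_m \mathcal W \cdot \bigl( \partial_{\tilde u_{k_0,r-1}} M \cdot w \bigr) \ = \ 0.
\end{equation*}
The assertion \eqref{eq:thmEq2} will then follow once the two partial derivatives of $\mathcal W$ are identified with the operators $R_{k_0}$ and $L$, and the invertibility of $L$ is established in the regime of interest.

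For the partial derivative with respect to the $k_0$th spatial argument of $\mathcal W(x_1,\ldots,x_K,m) = \sum_j a_{n-2j}\,\exp^{-1}_m(x_j)$, only the summand $j=k_0$ contributes. The required object is the differential of $x \mapsto \exp^{-1}_m(x)$ at $x_{k_0}$ with $m$ held fixed. This is a classical Jacobi-field computation: a variation of $x_{k_0}$ with initial velocity $w$ produces a one-parameter family of geodesics from $m$ to the varying endpoint, and the associated variation field is the Jacobi field $J$ along $\gamma$ with $J(0) = 0$ and $J(1) = w$. Exchanging the $s$- and $t$-derivatives via the symmetry of the Levi-Civita connection turns the variation of $\dot\gamma(0) = \exp^{-1}_m(x_{k_0})$ into $\frac{D}{dt}J(0)$, which is precisely $r_{k_0} w$ of \eqref{eq:Littlerjay}. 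Multiplying by the weight $a_{n-2k_0}$ reproduces $R_{k_0}$ of \eqref{eq:DefRk}.

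For the partial derivative with respect to $m$, every summand contributes. Varying $m$ with initial velocity $v \in \mathcal{TM}_m$ while keeping each $x_k$ fixed gives, for each $k$, a Jacobi field $J_k$ along $\gamma_k$ with reversed boundary conditions $J_k(0) = v$ and $J_k(1) = 0$. The same symmetry argument identifies the variation of $\dot\gamma_k(0) = \exp^{-1}_m(x_k)$ with $\frac{D}{dt}J_k(0) = l_k v$ of \eqref{eq:littlelJay}, and summing with weights $a_{n-2k}$ yields exactly $L v$ of \eqref{eq:DefL}. Well-definedness of both $r_{k_0}$ and $l_k$ (i.e.\ non-conjugacy along the relevant geodesics) is guaranteed in a geodesic ball of sufficiently small radius, which is the same regime that ensures uniqueness of the intrinsic mean \cite{afsari2011riemannian}.

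Substituting these identifications into the differentiated constraint gives $R_{k_0} w + L \cdot (\partial_{\tilde u_{k_0,r-1}} M \cdot w) = 0$, and it remains to invert $L$. This is the main technical obstacle: in the Euclidean/flat limit one computes $l_k = -\mathrm{Id}$ and thus $L = -\mathrm{Id}$ (using $\sum_k a_{n-2k} = 1$), so Jacobi-field comparison shows that $L$ remains a small perturbation of $-\mathrm{Id}$ and hence invertible throughout a sufficiently small geodesic ball around the mean, which is the setting relevant to the subdivision step. Solving the linear relation then produces the claimed identity $\partial_{\tilde u_{k_0,r-1}} M(u) \, w = -L^{-1} R_{k_0} w$, completing the argument.
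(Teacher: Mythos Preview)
The paper does not supply its own proof of this statement; it is quoted (in adapted notation) from \cite[Theorem~11]{storath2018variational}, and the surrounding text only sketches the strategy: differentiate the identity $\mathcal W'(x_1,\ldots,x_K)=0$ of \eqref{eq:ZeroVecFieldDerRightHs} and solve for $\partial M$. Your proposal carries out exactly this strategy, with the correct Jacobi-field identifications of the two partial derivatives of $\mathcal W$ and a standard perturbation argument for the invertibility of $L$, so there is nothing substantive to compare.
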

Combining these results with \eqref{eq:derWnrfornr} we can descibe the gradients of the mappings $W_{n,r}$
rather explicitly. We have  
\begin{equation}\label{eq:derWnrfornr}
\nabla_{W_{n,r}} (u)_{n',r'} =
\begin{cases}
s \ \frac{ 
\exp^{-1}_{\tilde u_{n,r}} \mean(a_{n-2\cdot}, \tilde u_{\cdot,r-1})}
{\|\exp^{-1}_{\tilde u_{n,r}}\mean(a_{n-2\cdot}, \tilde u_{\cdot,r-1}) \|^{2-q}}   ,    
& \text{if } r' = r,n' = n, \\
s \ R_{n'}^\ast   L^{-1 \ast} \
\frac{
\exp^{-1}_{\mean(a_{n-2\cdot}, \tilde u_{\cdot,r-1})}{\tilde u_{n,r}}}
{\|\exp^{-1}_{\mean(a_{n-2\cdot}, \tilde u_{\cdot,r-1})} {\tilde u_{n,r} \|^{2-q}}}
,    
& \text{if }  r' = r-1, n-2n' \in \supp a, \\
0 & \text{else,}
\end{cases} 
\end{equation}
where 
\begin{equation}
	s = \tfrac{\lambda_1}{q} \cdot 2^{r \ \left(\alpha-\tfrac{s}{2}-\tfrac{s}{q}\right)} 2^{r/2}.
\end{equation}
Here, $R_{n'}^\ast$ denotes the adjoint of $R_{n'}$ given by \eqref{eq:DefRk},
$L^{-1\ast}$ denotes the adjoint of $L^{-1} $ given by \eqref{eq:DefL},
the  symbol $\supp$ is used to denote the support of the mask $a$ of the subdivision scheme,
and the condition $n-2n' \in \supp a$ means that $u_{r-1,n'}$ contributes to the computation of  
$\hat u_{n,r},$ i.e., the corresponding weight $a_{n-2n'}$ for averaging is nonzero.

If the manifold $\mathcal M$ is a Riemannian symmetric space, the Jacobi fields 
needed to compute the mappings $R_{j_0},$ 
and $L$ in the above theorem can be made more explicit.
This is as well discussed in \cite{storath2018variational},
and we here only state the corresponding results for completeness.
In particular, we observe that basically a series of low dimensional eigenvalue problems
(instead of ODEs) has to be solved. 
As a reference on symmetric spaces we refer to \cite{cheeger1975comparison}.
We consider the geodesic $\gamma$ and the Jacobi fields $J$ with $J(0)=0$ along $\gamma.$
Then the numerical task is to compute an orthonormal basis $(w^i)_i$ of eigenvectors together with corresponding eigenvalues $(\lambda_i)_i$ of the self-adjoint Jacobi operator 
$J \mapsto R(\tfrac{\gamma'(0)}{\|\gamma'(0) \|},J)\tfrac{\gamma'(0)}{\|\gamma'(0) \|},$ 
where $R$ denotes the Riemannian curvature tensor.
In terms of this eigen-decomposition the adjoint $R_{k}^\ast$ can then be written using the functional calculus
together with a spectral mapping theorem, i.e., $R_{k}^\ast$ is given by  
\begin{equation}\label{eq:DiffOfLogInSym}
w= \sum\nolimits_i \alpha_i  w^i \ \mapsto \ R_{k}^\ast w =  a_{n-2k}\sum\nolimits_i \alpha_i f_1(\lambda_i) \ 
\pt_{m,\tilde u_{k,r-1}} w^n,
\end{equation}
where 
$f_1(\lambda_i) = 1$ if $\lambda_i = 0,$
$f_1(\lambda_i) = \frac{\sqrt{\lambda_i} d}{\sin (\sqrt{\lambda_i} d)},$ 
if $\lambda_i > 0,$ and $d < \pi/\sqrt{\lambda_i},$
and where 
$f_1(\lambda_i) = 
\frac{\sqrt{-\lambda_i} d}{\sinh (\sqrt{-\lambda_i} d)},$  if $\lambda_i < 0,$
with $d= \dist(m,\tilde u_{k,r-1}).$ 
Here, $\pt_{m,\tilde u_{k,r-1}} w^i$ denotes the parallel transport of the basis vector $w^i$ 
from the point $m = \mean(a_{n-2\cdot}, \tilde u_{\cdot,r-1}) $ to the point $\tilde u_{k,r-1}.$ 
(For the parallel transport, there are typically closed form expressions in a symmetric space.)
Similarly, but even without parallel transport, we may compute 
the adjoints $l_k^\ast$ defined by \eqref{eq:littlelJay}
which yield the adjoint $L^\ast$ via
$L^\ast \ v := \sum_k a_{n-2k} \ l^\ast_k \ v,$
cf. \eqref{eq:DefL}.
We consider the geodesics $\gamma_k$ we used above for \eqref{eq:littlelJay} to connect $m=\gamma(0)$ and $\tilde u_{k,r-1}=\gamma(1),$
and the orthonormal basis $(w^n)_n$ of eigenvectors of the self-adjoint Jacobi operator $R$ w.r.t.\ the geodesic $\gamma_k$ and the above Jacobi fields $J$ with $J(1)=0.$
We get, for $l_{k}^\ast: \mathcal{TM}_{m} \to \mathcal{TM}_{m},$
\begin{align}\label{eq:ComputeljotIn}
l_k^\ast :  \ w= \sum\nolimits_i \alpha_i  w^i  \mapsto  l_k^\ast (w) = \sum\nolimits_i f_2(\lambda_i) \ \alpha_i \  w^i, 
\end{align}
where 
$f_2(\lambda_i) = -1,$ if $\quad \lambda_n = 0,$  
where 
$f_2(\lambda_i) = - d \sqrt{\lambda_n} \ \frac{  \cos( \sqrt{\lambda_n} d)}{\sin( \sqrt{\lambda_n} d)},$  
if $\lambda_n > 0,$ and $d < \pi/\sqrt{\lambda_n},$ 
and where
$f_2(\lambda_i) = - d \sqrt{-\lambda_n} \ \frac{  \cosh( \sqrt{-\lambda_n} d)}{\sinh( \sqrt{-\lambda_n} d)},$ 
if $\lambda_n < 0.$

\paragraph{Proximal mappings of the atoms of the wavelet regularizers $\mathcal W_\lambda^{\alpha,q}(u).$}
We here explain how to compute the proximal mappings of the atoms $W_{n,r}$ of the wavelet regularizer $\mathcal W_\lambda^{\alpha,q}$ of \eqref{eq:DefWavRegMult} given in \eqref{eq:SplittingGeneralLevel} and \eqref{eq:SplittingZeroLevel}.

For the zeroth level $n=0,$ we notice that, by \eqref{eq:SplittingZeroLevel}, we have to compute the proximal mapping of \eqref{eq:atomWavExplCalc}.
This essentially corresponds to computing the proximal mapping of the  $q$th power of the distance function as a joint function of both arguments. The proximal mapping can be explicitly computed in terms of geodesics as shown in \cite{weinmann2014total}. It is given by 
\begin{align}\label{eq:tgv_manifoldParallelUnivAlgoProxDist}
\prox_{W_{\tilde n,0}} (u)_{n',0}=  
\begin{cases}
[\tilde u_{n-e_i,0},\tilde u_{n,0}]_t,    &\text{if } n' = n-e_i,  \\
[\tilde u_{n,0},\tilde u_{n-e_i,0}]_t,    &\text{if } n'= n,    \\
u_{n'}  ,              &\text{else}.  
\end{cases}
\end{align}
Here, $[\tilde u_{n-e_i,0},\tilde u_{n,0}]_t$ denotes the unit speed geodesic connecting   
$\tilde u_{n-e_i,0}$ and $\tilde u_{n,0}$ evaluated at time parameter $t,$ with $t$ dependent on $q.$
For $q=1,2$ we can explicitly calculate $t$ as
\begin{align}
	t = 
	\begin{cases}
	\tfrac{\lambda_2}{(2+ 2 \lambda_2)} \ \dist(\tilde u'_{n-e_i,0},\tilde u'_{n,0})
	   & \text{ for } q= 2,\\      
	\min\left(\lambda_2,\tfrac{1}{2} \dist(\tilde u'_{n-e_i,0},\tilde u'_{n,0})\right) &  
		\text{ for } q= 1.            
	\end{cases}           
\end{align} 

Concerning a general level $r \neq 0,$ according to \eqref{eq:SplittingGeneralLevel}, we have to compute the proximal mappings of \eqref{eq:proxWavGenLevFun}. This means, for given iterate $u \in \mathcal M^N$, we have to find a minimizer of the functional
\begin{align}\label{eq:proxWavGenLevFunExplDown}
u' \mapsto \tfrac{1}{2\mu} \dist(u,u')^2 +   W_{n,r}(u')  =
\tfrac{1}{2\mu} \dist(u,u')^2 +
\lambda_1 \cdot 2^{r q \ \left(\alpha+\tfrac{s}{2}-\tfrac{s}{q} \right)}\| d_{n,r}(u') \|^q_{\hat u_{n,r}}.   
\end{align} 
$q \geq 1.$
This is particularly interesting if the exponent $q$ equals $one$ since then  
the distance mapping is not differentiable on the diagonal. 
Since for a general level $r,$ no closed form expression of the proximal mapping of \eqref{eq:proxWavGenLevFun}
seems available,
we use a gradient descent scheme for $q>1$ , and a subgradient descent for $q=1$. 
Subgradient descent has already been used to compute the proximal mappings in the context of higher order 
total variation type regularization \cite{bavcak2016second,bredies2017total}.
Let us explain the (sub)gradient descent scheme to compute the proximal mapping of \eqref{eq:proxWavGenLevFun}. 
We first recall that the gradients 
of the mapping $u_k' \mapsto \dist(u_k,u_k')^2$ are given by $-\log_{u'_{k}}u_{k}.$ 
The gradient of the summand $W_{n,r}$ in \eqref{eq:proxWavGenLevFunExplDown} is given by 
\eqref{eq:derWnrfornr}. Using these gradients we compute the gradient of the mapping
\eqref{eq:proxWavGenLevFunExplDown} and then apply a standard gradient descent scheme for $q>1.$
If $q=1,$ we apply subgradient descend with an iteration dependent damping factor, i.e., for the $l$th iteration the subgradient of the  $(l-1)$th iterate is scaled by a factor $\tau_l.$ The sequence $(\tau_l)_l$ is chosen to be square-summable but not summable for convergence reasons. For the computation of the subgradient     
of the summands $W_{n,r}$ for $q=1,$ we notice that the mapping is differentiable whenever 
$\mean(a_{n-2\cdot}, \tilde u_{\cdot,r-1}) \neq \tilde u_{n,r}.$ For the degenerate case of point constellations where $W_{n,r}$ is not differentiable we notice that then at least $0$ is contained in the subdifferential of $W_{n,r}.$

\paragraph{Proximal mappings for the  analogue of $\ell_0$ sparse wavelet regularization.}
We here consider the atoms of $\mathcal W_\lambda^0(u)$ of \eqref{eq:Defl0SparseMult}
which we also denote by $W_{n,r}$ as in \eqref{eq:SplittingGeneralLevel}
and \eqref{eq:SplittingZeroLevel}.

For the zeroth level $n=0,$ we notice that, by \eqref{eq:SplittingZeroLevel}, we have to compute the proximal mapping of 
\begin{align}
u' \mapsto W_{\tilde n,0}(u') = W_{(n,i),0}(u') = 
\lambda_2 \ \#   \{\ \tilde u'_{n-e_i,0} \neq \tilde u'_{n,0} \}.
\end{align} 
We first notice that both $\tilde u'_{n-e_i,0}, \tilde u'_{n,0}$ correspond to data items of $u'$ whose indices we denote by $\iota(0),\iota(1),$ i.e., 
$\tilde u'_{n-e_i,0} = u'_{\iota(0)},$  and 
$\tilde u'_{n,0} = u'_{\iota(1)}.$ 
Then the computation of the proximal mapping of $W_{\tilde n,0}(u)$ amounts to minimizing, for given $u$, the functional 
\begin{align}\label{eq:ProblemZerothLevel}
u' \mapsto \tfrac{1}{2\mu} \dist(u,u')^2 + \lambda_2 \ \#  \{\ \tilde u'_{n-e_i,0} \neq \tilde u'_{n,0} \}.
\end{align} 
A short application of calculus shows that, if $\dist(u_{\iota(0)},u_{\iota(1)})> 2 \sqrt{\mu\lambda_2},$ 
then $u^\ast := u$ minimizes \eqref{eq:ProblemZerothLevel}. 
If $\dist(u_{\iota(0)},u_{\iota(1)})> 2 \sqrt{\mu\lambda_2},$ it follows that choosing both components $\iota(0)$ and $\iota(1)$ of  $u^\ast$ by the geodesic midpoint of $u_{\iota(0)}$ and $u_{\iota(1)}$ and setting the other components to the corresponding values of $u$ yields a minimizer of 
\eqref{eq:ProblemZerothLevel}, i.e., a minimizer $u^\ast$ is given by its components
\begin{align}
	(u^\ast)_l =
	\begin{cases} 
	     \mean\left((0.5,0.5), (u_{\iota(0)},u_{\iota(1)})\right),\quad  & 
	     \text{if} \ l \in \{\iota(0),\iota(1)\},   \\
	     u_l, & \text{else}.
	\end{cases}  
\end{align}
For a general level $r \neq 0,$ we have to compute the proximal mappings of 
\begin{align}
u' \mapsto W_{n,r}(u') = \lambda_1  \ \# \ \{d_{n,r}(u) \neq 0\}
\end{align} 
where the right hand side equals zero if and only if $\tilde u'_{n,r}  \in \mathrm{S} \tilde u'_{n',r-1},$  
i.e., $\tilde u'_{n,r}$ is a mean of the $\tilde u_{n',r-1}$ with the weights given by the subdivision scheme;
see \eqref{eq:DefDnr} and \eqref{eqDefSubdiv}.
Else, the right hand side equals $\lambda_1.$
The computation of the proximal mapping of $W_{n,r}(u)$ amounts to minimizing, for given $u$, the functional 
\begin{align}\label{eq:ProblemZerothLevel}
u' \mapsto W'(u')= \tfrac{1}{2\mu} \dist(u,u')^2 +   \lambda_1  \ \# \ \{d_{n,r}(u') \neq 0\}.
\end{align} 
We distinguish two cases depending on whether $\#  \{d_{n,r}(u') \neq 0\}$ equals one or zero.
Given that $\#  \{d_{n,r}(u') \neq 0\} = 1,$ the optimal solution of \eqref{eq:ProblemZerothLevel} subject to this constraint is given by
\begin{align}
	u^\#  =  u, \qquad  W'(u^\#) = \lambda_1.                 
\end{align}
Given that $\#  \{d_{n,r}(u') \neq 0\} = 0,$ we have by the discussion preceding \eqref{eq:ProblemZerothLevel}
that
\begin{align}\label{eq:RewriteMin4W0dnr}
u^{\#\#}  
=  \argmin_{u': d_{n,r}(u') = 0} W'(u')
= \argmin_{u': \tilde u'_{n,r}  \in \mathrm{S} \tilde u'_{n',r-1}} W'(u')
\end{align}
For minimizing the right hand side of \eqref{eq:RewriteMin4W0dnr}, we set all the components of $u^{\#\#}$ to 
the corresponding components of $u$ which do not correspond to $\tilde u^{\#\#}_{n,r}$ or to $\tilde u^{\#\#}_{n',r-1}$ 
with $n-2n' \in \supp a.$ The components of $u^{\#\#}$  corresponding to $\tilde u^{\#\#}_{n',r-1}$
are given by minimizing the functional
\begin{align}\label{eq:ProblemRedGenL0}
   u' \mapsto V(u') :=
   \tfrac{1}{2\mu} 
   \left(
   \sum_{n': n-2n' \in \supp a} \dist(\tilde u'_{n',r-1},\tilde u_{n',r-1})^2 
   +  
   \dist(\mean(a_{n-2\cdot}, \tilde u'_{\cdot,r-1}),\tilde u_{n,r})^2
   \right).
\end{align} 
Then, the component $\tilde u^{\#\#}_{n,r}$ of $u^{\#\#}$ is given by the point   
$\mean(a_{n-2\cdot}, \tilde u^{\#\#}_{\cdot,r-1})$ of \eqref{eq:ProblemRedGenL0} for the minimizing 
arguments $\tilde u^{\#\#}_{n',r-1}$ of \eqref{eq:ProblemRedGenL0}. 
For minimizing \eqref{eq:ProblemRedGenL0}, we observe that \eqref{eq:ProblemRedGenL0} just states the minimization problem for the proximal mapping of the mean mapping already discussed before.
We note that the gradient of the first summands are just the gradients of the distance map as discussed above.
Further, we notice that the last summand of the functional $V$ equals the second line of \eqref{eq:derWnrfornr}.
Then we use gradient descent for computing the $\tilde u^{\#\#}_{n',r-1}.$
 Summing up, we have 
\begin{equation}
\prox_{\mu W_{n,r}}(u) =
\begin{cases}
  u^{\#} & \text{if} \ V(u^{\#\#}) > \lambda_1, \\
  u^{\#\#}&  \text{if} \ V(u^{\#\#}) \leq  \lambda_1,
\end{cases}
\end{equation}
where $V$ is given by \eqref{eq:ProblemRedGenL0}.

\paragraph{Gradients and proximal mappings for the atoms of the data term.}
Methods for deriving the gradients and proximal mappings of the decomposition of the data term $\mathcal D$ into its atoms $D_i$ according to \eqref{eq:DataTermDi} have been derived in \cite{storath2018variational}.
For the data term, we employ the schemes derived there and refer to this reference for details.
(There, similar as above, the gradients of the intrinsic mean mapping are employed for the computations.)

\begin{figure}[!tp]
\def\figfolderA{experiments/exp_denoising_1D/}
\def\figfolderB{experiments/exp_denoising_1D_S2/}
\def\figfolderC{experiments/exp_denoising_1D_Pos3/}
\def\hs{\hfill}
\def\vs{\vspace{0.03\textwidth}}
\def\figurewidth{0.3\textwidth}
\def\figurewidthB{0.15\textwidth}
\centering
{
\footnotesize
\begin{tabular}{ccc}
Data & 1st order interpol. wavelet & 3rd order DD wavelet \\[1ex] 
\includegraphics[width=\figurewidth]{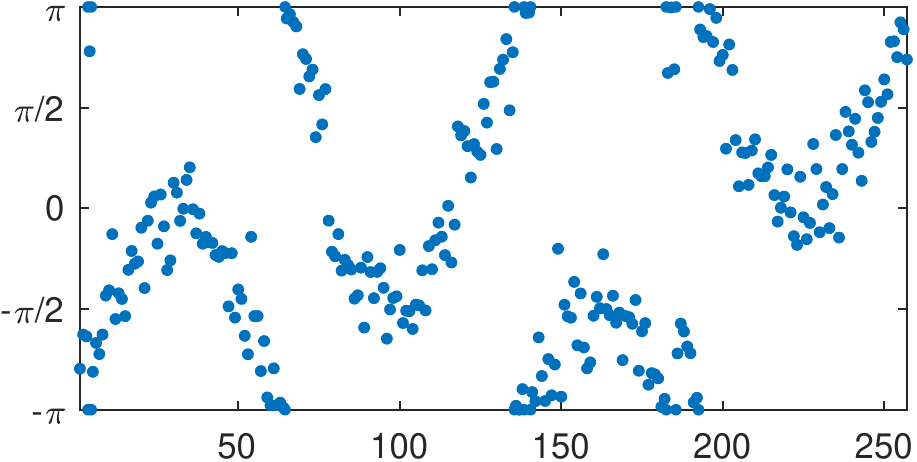}   &
\includegraphics[width=\figurewidth]{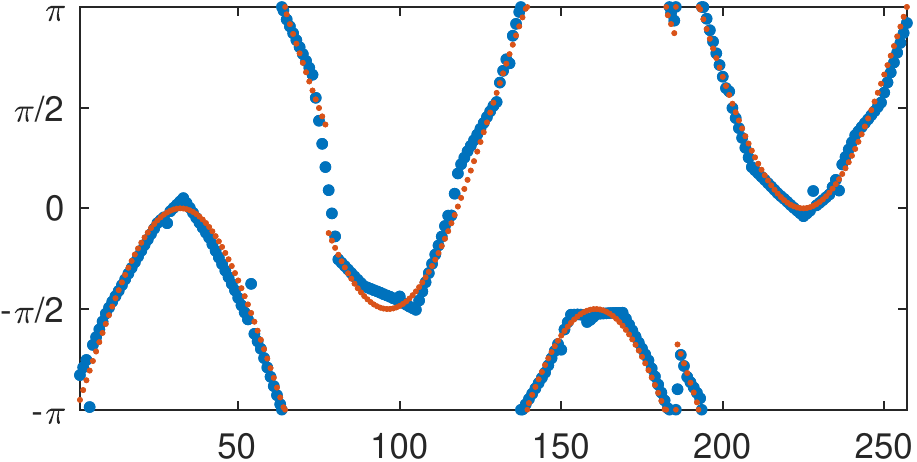} &
\includegraphics[width=\figurewidth]{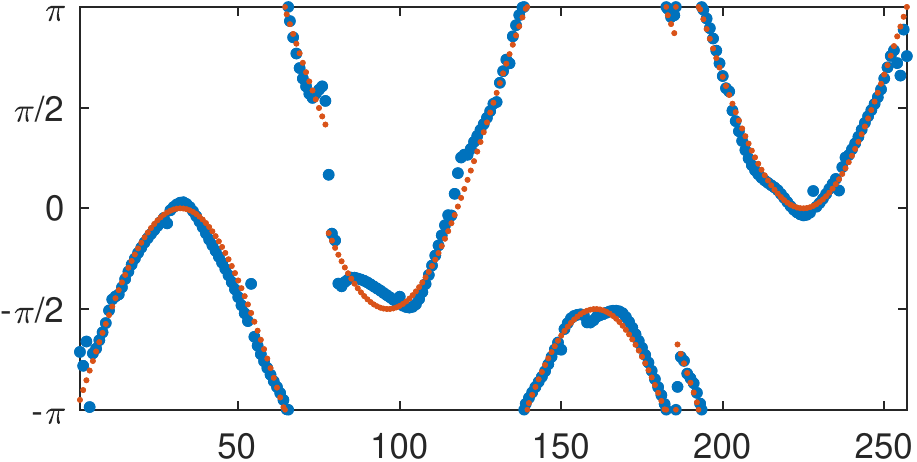}  \\
 &
$\deltaSNR$: $\input{\figfolderA exp_denoising_1D_Haar_deltaSNR_1.txt}$ dB &
$\deltaSNR$: $\input{\figfolderA exp_denoising_1D_Spline_deltaSNR_1.txt}$ dB \\[2ex]
\includegraphics[width=\figurewidth]{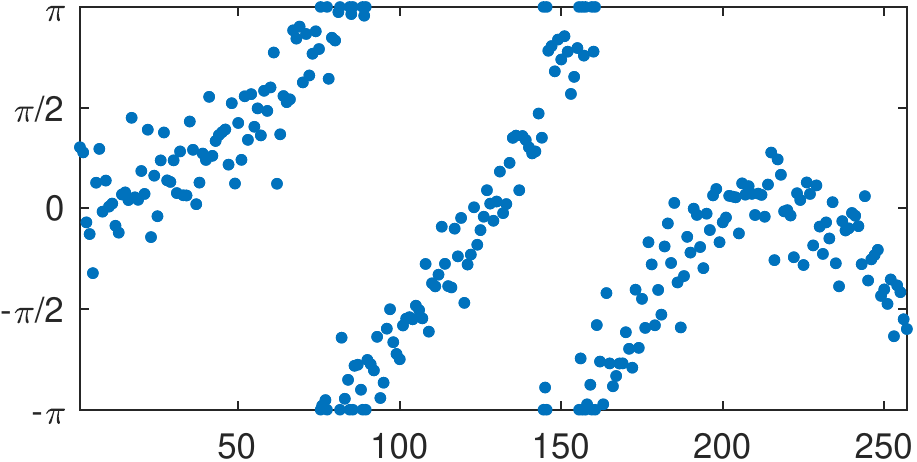}  &
\includegraphics[width=\figurewidth]{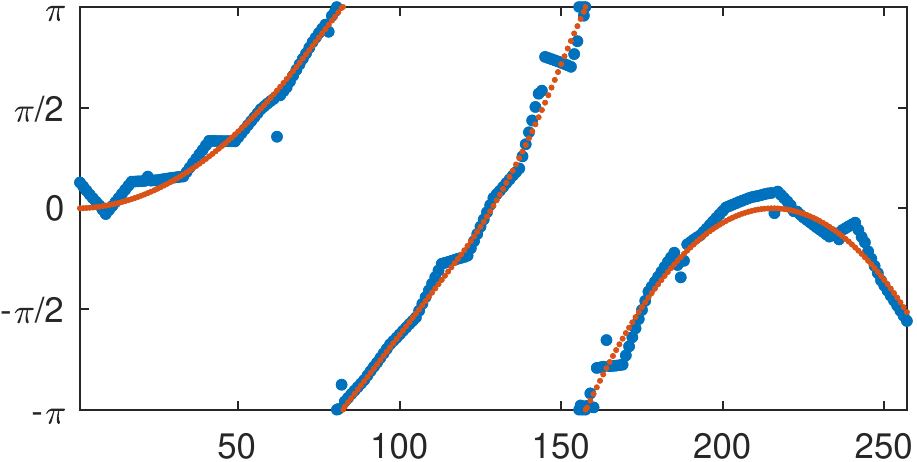} &
\includegraphics[width=\figurewidth]{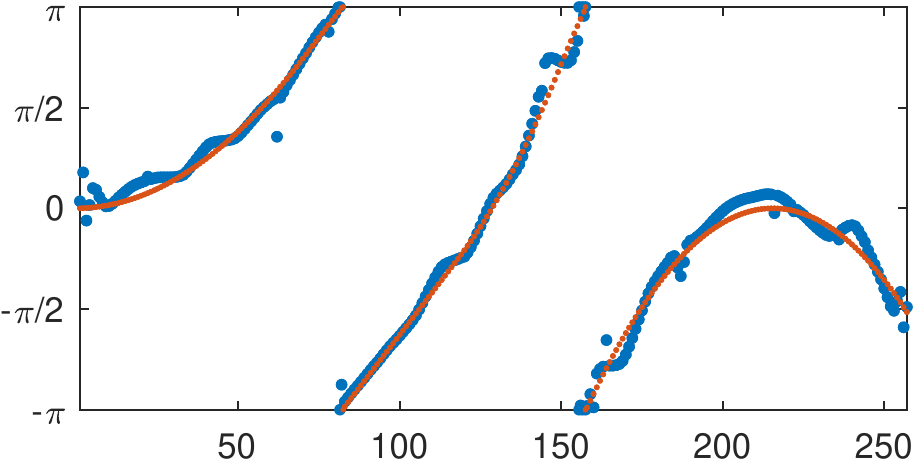} \\
&
$\deltaSNR$: $\input{\figfolderA exp_denoising_1D_Haar_deltaSNR_2.txt}$ dB &
$\deltaSNR$: $\input{\figfolderA exp_denoising_1D_Spline_deltaSNR_2.txt}$ dB \\[2ex]
\includegraphics[width=\figurewidth]{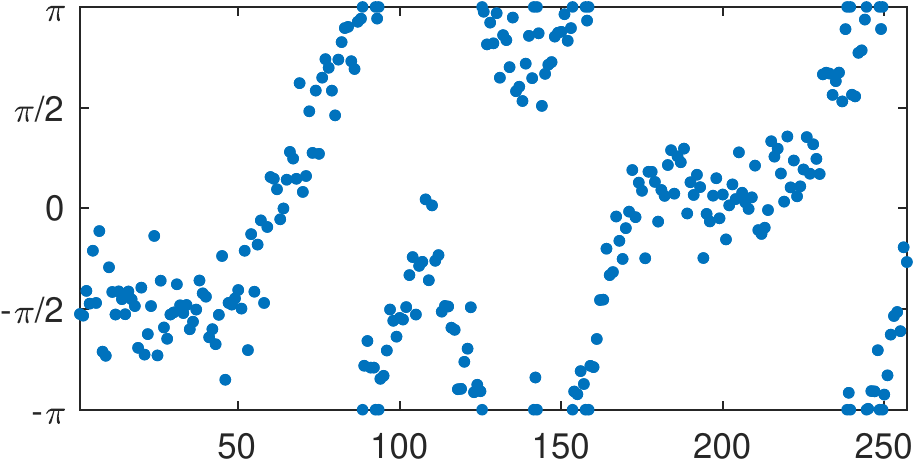}  &
\includegraphics[width=\figurewidth]{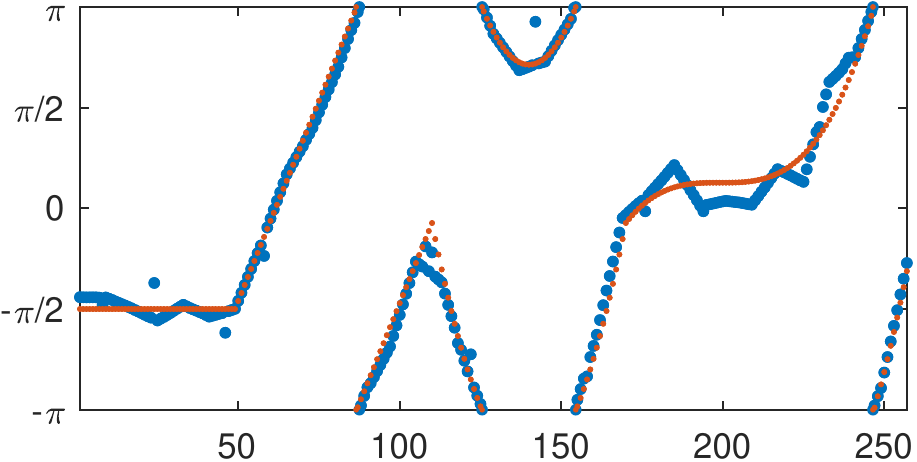} &
\includegraphics[width=\figurewidth]{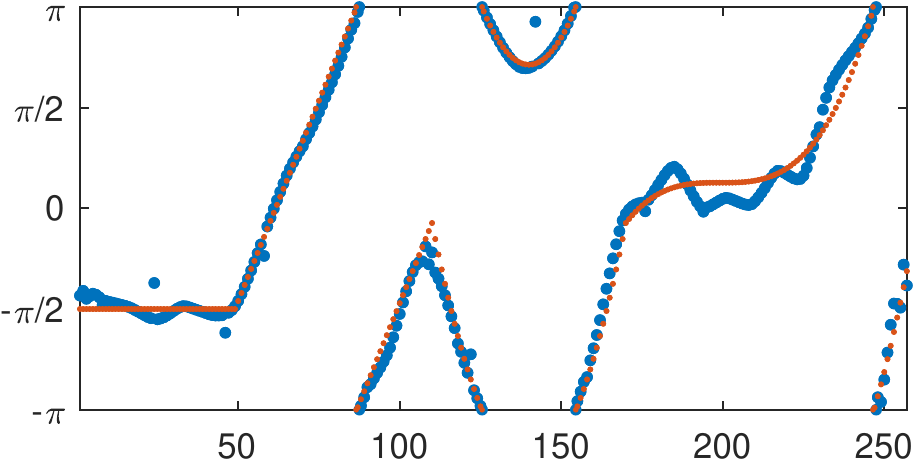} \\
&
$\deltaSNR$: $\input{\figfolderA exp_denoising_1D_Haar_deltaSNR_3.txt}$ dB &
$\deltaSNR$: $\input{\figfolderA exp_denoising_1D_Spline_deltaSNR_3.txt}$ dB \\[4ex]
\includegraphics[width=\figurewidthB]{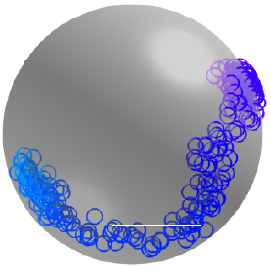}   &
\includegraphics[width=\figurewidthB]{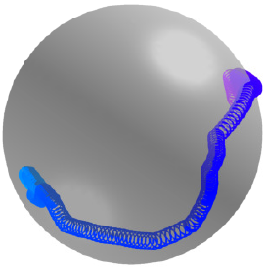} &
\includegraphics[width=\figurewidthB]{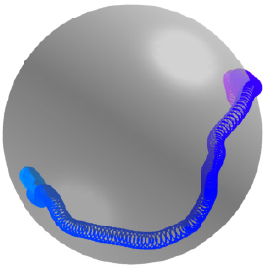}  
 \\
 &
$\deltaSNR$: $\input{\figfolderB exp_denoising_1D_S2_Haar_deltaSNR_1.txt}$ dB &
$\deltaSNR$: $\input{\figfolderB exp_denoising_1D_S2_Spline_deltaSNR_1.txt}$ dB \\[4ex]
\includegraphics[width=\figurewidthB]{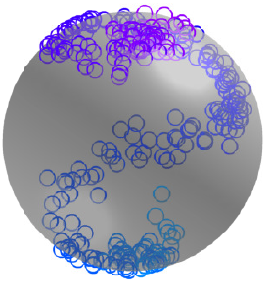}   &
\includegraphics[width=\figurewidthB]{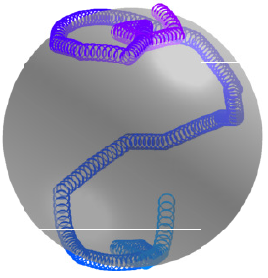} &
\includegraphics[width=\figurewidthB]{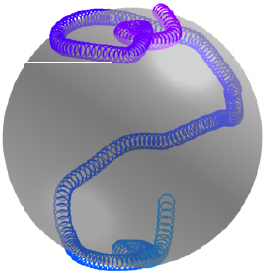}  
 \\
 &
$\deltaSNR$: $\input{\figfolderB exp_denoising_1D_S2_Haar_deltaSNR_2.txt}$ dB &
$\deltaSNR$: $\input{\figfolderB exp_denoising_1D_S2_Spline_deltaSNR_2.txt}$ dB \\[4ex]
\includegraphics[width=\figurewidth]{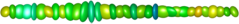}   &
\includegraphics[width=\figurewidth]{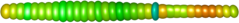} &
\includegraphics[width=\figurewidth]{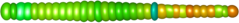}  
 \\
 &
$\deltaSNR$: $\input{\figfolderC exp_denoising_1D_Pos3_Haar_deltaSNR_1.txt}$ dB &
$\deltaSNR$: $\input{\figfolderC exp_denoising_1D_Pos3_Spline_deltaSNR_1.txt}$ dB \\[4ex]
\includegraphics[width=\figurewidth]{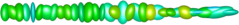}   &
\includegraphics[width=\figurewidth]{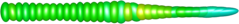} &
\includegraphics[width=\figurewidth]{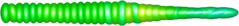}  
 \\
 &
$\deltaSNR$: $\input{\figfolderC exp_denoising_1D_Pos3_Haar_deltaSNR_2.txt}$ dB &
$\deltaSNR$: $\input{\figfolderC exp_denoising_1D_Pos3_Spline_deltaSNR_2.txt}$ dB 
\end{tabular}
}
\caption{
Results of the proposed $\ell^1$ wavelet regularization 
for manifold-valued signals. The given noisy data is shown on the left, 
the result using the manifold analogue of the first order interpolatory wavelet and the third order Deslaurier-Dubuc (DD) wavelet
along with the corresponding signal-to-noise-ratio improvement are shown in the center and on the right, respectively.\\
}
\label{fig:l1}
\end{figure}

\section{Experimental Results}\label{sec:Experiments}

\begin{figure}[!tp]
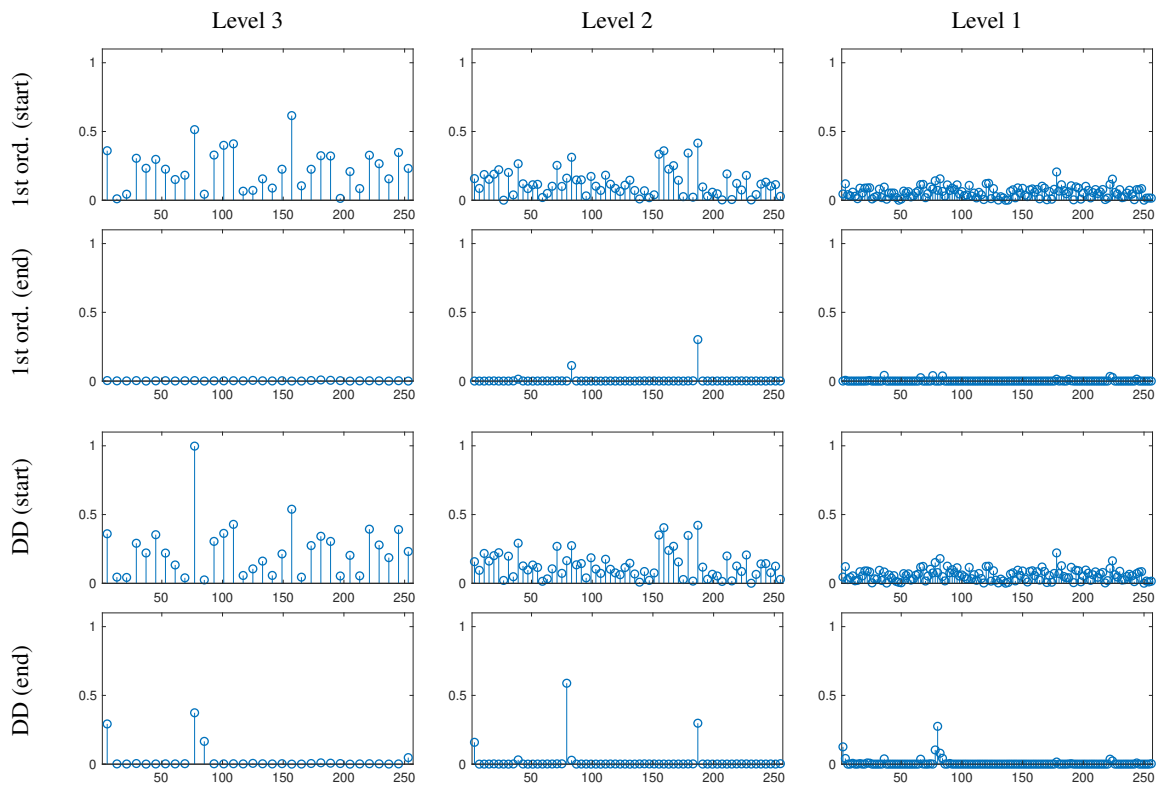

\def\figfolderA{experiments/exp_coeff_1D/}
\def\hs{\hfill}
\def\vs{\vspace{0.03\textwidth}}
\def\figurewidth{0.29\textwidth}
\def\figurewidthB{0.15\textwidth}
\def\levelA{1}
\def\levelB{2}
\def\levelC{3}
\centering
{
\footnotesize
\begin{tabular}{m{0.03\textwidth}ccc}
& Level 3 & Level 2 & Level 1 \\[1ex]
\tabrotate{\phantom{x}1st ord. (start)} & \includegraphics[width=\figurewidth]{\figfolderA start_exp_coeff_1D_Haar_level\levelA} &
\includegraphics[width=\figurewidth]{\figfolderA start_exp_coeff_1D_Haar_level\levelB} &
\includegraphics[width=\figurewidth]{\figfolderA start_exp_coeff_1D_Haar_level\levelC} \\
\tabrotate{\phantom{xx}1st ord. (end)} & \includegraphics[width=\figurewidth]{\figfolderA end_exp_coeff_1D_Haar_level\levelA} &
\includegraphics[width=\figurewidth]{\figfolderA end_exp_coeff_1D_Haar_level\levelB} &
\includegraphics[width=\figurewidth]{\figfolderA end_exp_coeff_1D_Haar_level\levelC} \\[2ex]
\tabrotate{\phantom{xxxx}DD (start)} &  \includegraphics[width=\figurewidth]{\figfolderA start_exp_coeff_1D_Spline_level\levelA} &
\includegraphics[width=\figurewidth]{\figfolderA start_exp_coeff_1D_Spline_level\levelB} &
\includegraphics[width=\figurewidth]{\figfolderA start_exp_coeff_1D_Spline_level\levelC} \\
\tabrotate{\phantom{xxxx}DD (end)} & \includegraphics[width=\figurewidth]{\figfolderA end_exp_coeff_1D_Spline_level\levelA} &
\includegraphics[width=\figurewidth]{\figfolderA end_exp_coeff_1D_Spline_level\levelB} &
\includegraphics[width=\figurewidth]{\figfolderA end_exp_coeff_1D_Spline_level\levelC} \\
\end{tabular}
}
\caption{
Wavelet coefficients of the first signal in Figure~\ref{fig:l1}
at the start (noisy data) and the end (reconstruction) of the iteration
using the  manifold analogue of the first order interpolatory wavelet (first and second row)
and the third order Deslaurier-Dubuc (DD) wavelet (third and fourth row).
Only a few coefficients remain significantly larger than zero after the iterations.
}
\label{fig:coeff_l1}
\end{figure}

\begin{figure}[!tp]
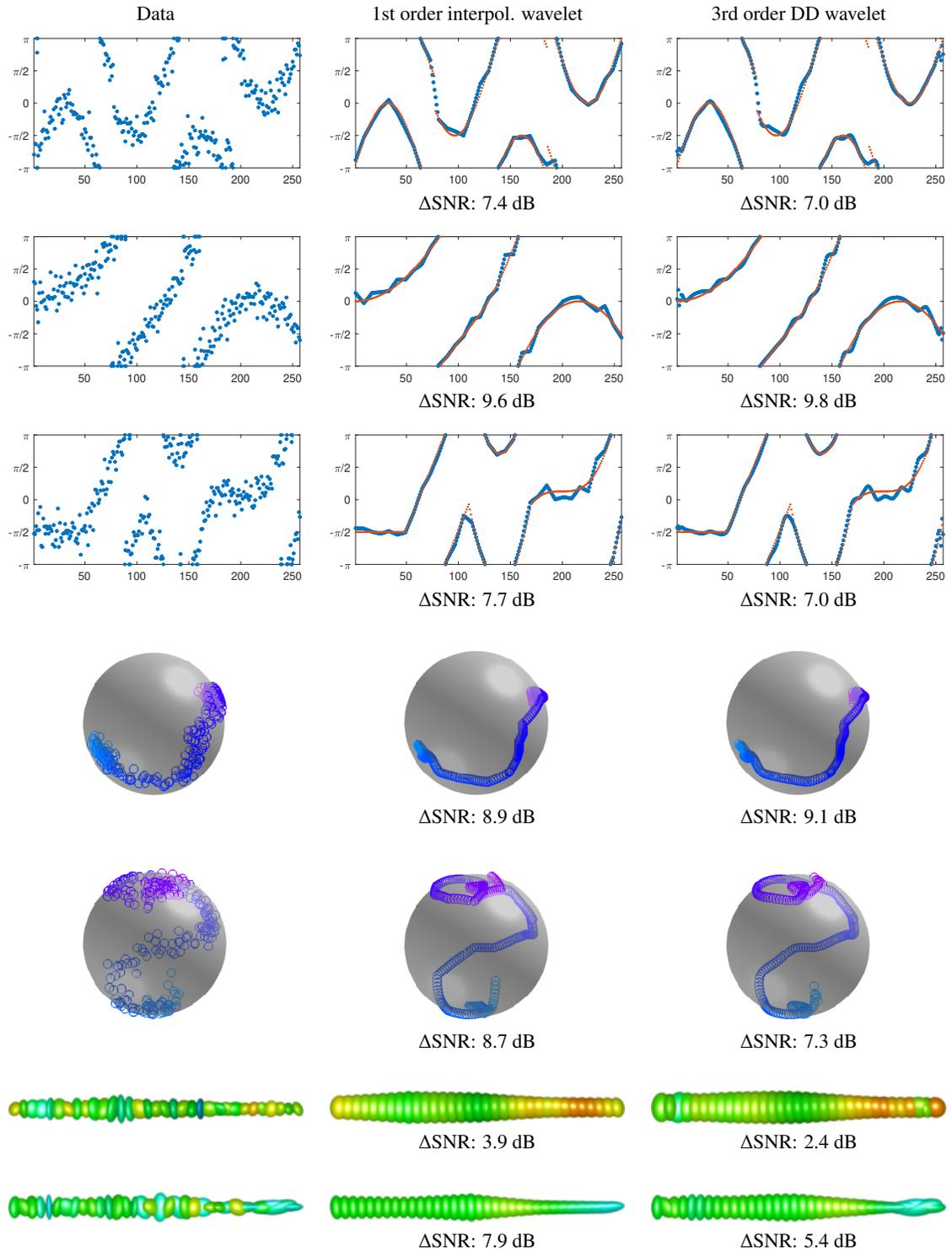

\def\figfolderA{experiments/exp_denoising_l0/}
\def\figfolderB{experiments/exp_denoising_1D_S2_l0/}
\def\figfolderC{experiments/exp_denoising_1D_Pos3_l0/}
\def\hs{\hfill}
\def\vs{\vspace{0.03\textwidth}}
\def\figurewidth{0.3\textwidth}
\def\figurewidthB{0.15\textwidth}
\centering
{
\footnotesize
\begin{tabular}{ccc}
Data & 1st order interpol. wavelet & 3rd order DD wavelet \\[1ex]
\includegraphics[width=\figurewidth]{\figfolderA exp_denoising_1D_l0_Haar_data_1}   &
\includegraphics[width=\figurewidth]{\figfolderA exp_denoising_1D_l0_Haar_result_1} &
\includegraphics[width=\figurewidth]{\figfolderA exp_denoising_1D_l0_Spline_result_1}  \\
 &
$\deltaSNR$: $\input{\figfolderA exp_denoising_1D_l0_Haar_deltaSNR_1.txt}$ dB &
$\deltaSNR$: $\input{\figfolderA exp_denoising_1D_l0_Spline_deltaSNR_1.txt}$ dB \\[2ex]
\includegraphics[width=\figurewidth]{\figfolderA exp_denoising_1D_l0_Haar_data_2}  &
\includegraphics[width=\figurewidth]{\figfolderA exp_denoising_1D_l0_Haar_result_2} &
\includegraphics[width=\figurewidth]{\figfolderA exp_denoising_1D_l0_Spline_result_2} \\
&
$\deltaSNR$: $\input{\figfolderA exp_denoising_1D_l0_Haar_deltaSNR_2.txt}$ dB &
$\deltaSNR$: $\input{\figfolderA exp_denoising_1D_l0_Spline_deltaSNR_2.txt}$ dB \\[2ex]
\includegraphics[width=\figurewidth]{\figfolderA exp_denoising_1D_l0_Haar_data_3}  &
\includegraphics[width=\figurewidth]{\figfolderA exp_denoising_1D_l0_Haar_result_3} &
\includegraphics[width=\figurewidth]{\figfolderA exp_denoising_1D_l0_Spline_result_3} \\
&
$\deltaSNR$: $\input{\figfolderA exp_denoising_1D_l0_Haar_deltaSNR_3.txt}$ dB &
$\deltaSNR$: $\input{\figfolderA exp_denoising_1D_l0_Spline_deltaSNR_3.txt}$ dB \\[4ex]
\includegraphics[width=\figurewidthB]{\figfolderB exp_denoising_1D_S2_l0_Haar_data_1}   &
\includegraphics[width=\figurewidthB]{\figfolderB exp_denoising_1D_S2_l0_Haar_result_1} &
\includegraphics[width=\figurewidthB]{\figfolderB exp_denoising_1D_S2_l0_Spline_result_1}  
 \\
 &
$\deltaSNR$: $\input{\figfolderB exp_denoising_1D_S2_l0_Haar_deltaSNR_1.txt}$ dB &
$\deltaSNR$: $\input{\figfolderB exp_denoising_1D_S2_l0_Spline_deltaSNR_1.txt}$ dB \\[4ex]
\includegraphics[width=\figurewidthB]{\figfolderB exp_denoising_1D_S2_l0_Haar_data_2}   &
\includegraphics[width=\figurewidthB]{\figfolderB exp_denoising_1D_S2_l0_Haar_result_2} &
\includegraphics[width=\figurewidthB]{\figfolderB exp_denoising_1D_S2_l0_Spline_result_2}  
 \\
 &
$\deltaSNR$: $\input{\figfolderB exp_denoising_1D_S2_l0_Haar_deltaSNR_2.txt}$ dB &
$\deltaSNR$: $\input{\figfolderB exp_denoising_1D_S2_l0_Spline_deltaSNR_2.txt}$ dB \\[4ex]
\includegraphics[width=\figurewidth]{\figfolderC exp_denoising_1D_Pos3_l0_Haar_data_1}   &
\includegraphics[width=\figurewidth]{\figfolderC exp_denoising_1D_Pos3_l0_Haar_result_1} &
\includegraphics[width=\figurewidth]{\figfolderC exp_denoising_1D_Pos3_l0_Spline_result_1}  
 \\
 &
$\deltaSNR$: $\input{\figfolderC exp_denoising_1D_Pos3_l0_Haar_deltaSNR_1.txt}$ dB &
$\deltaSNR$: $\input{\figfolderC exp_denoising_1D_Pos3_l0_Spline_deltaSNR_1.txt}$ dB \\[4ex]
\includegraphics[width=\figurewidth]{\figfolderC exp_denoising_1D_Pos3_l0_Haar_data_2}   &
\includegraphics[width=\figurewidth]{\figfolderC exp_denoising_1D_Pos3_l0_Haar_result_2} &
\includegraphics[width=\figurewidth]{\figfolderC exp_denoising_1D_Pos3_l0_Spline_result_2}  
 \\
 &
$\deltaSNR$: $\input{\figfolderC exp_denoising_1D_Pos3_l0_Haar_deltaSNR_2.txt}$ dB &
$\deltaSNR$: $\input{\figfolderC exp_denoising_1D_Pos3_l0_Spline_deltaSNR_2.txt}$ dB 
\end{tabular}
}
\caption{Results of the  $\ell^0$-variant of the propose method for the same data as given in Figure~\ref{fig:l1}. The given noisy data is shown on the left, 
the result using the manifold analogue of the first order interpolatory wavelet and the third order Deslaurier-Dubuc (DD) wavelet
		along with the corresponding signal-to-noise-ratio improvement are shown in the center and on the right, respectively.
}
\label{fig:l0}
\end{figure}

We carry out experiments for data with values in the circle $S^1$, the sphere $S^2$ and 
 the manifold $\Pos_3$ of positive definite matrices equipped with the Fisher-Rao metric.
 $S^1$ valued data is visualized by the phase angle, and color-coded as hue value in the HSV color space when displaying image data.
 We visualize $S^2$-valued signal by points on the unit sphere.
The abscissa is represented  by a color varying continuously from the start of signal (blue) to the end of the signal (purple).
Data on the $\Pos_3$ manifold is visualized
by the isosurfaces of the corresponding quadratic forms. More precisely, the ellipse visualizing
the point $f_p$ at voxel $p$ are the points $x$ fulfilling $(x-p)^\top f^{-1}_p (x-p) = c,$ for some $c>0.$
The $S^1$-valued data are corrupted by Von Mises noise 
with concentration parameter $\kappa$ \cite{mardia2009directional}.
The $S^2$-valued data are created from the original image $g$ by
$f_{ij} = \exp_{g_{ij}} \eta_{ij}$ 
where  $\eta_{ij}$ is a tangent vector at $g_{ij}$
and both its  components are Gaussian distributed with standard deviation $\sigma.$
The $\Pos_3$ valued signal are corrupted by Rician noise  with noise parameter $\eta$ \cite{basu2006rician}.
To quantitatively measure the  quality of a reconstruction,
we use the manifold variant of the \emph{signal-to-noise ratio improvement} 
$
	\deltaSNR = 10 \log_{10} (  ( \sum_{ij} d(g_{ij}, f_{ij})^2) / (\sum_{ij} d(g_{ij}, u_{ij})^2 ) )  \dB,
$
see \cite{weinmann2014total}.
Here $f$ is the noisy data, $g$ is the ground truth, and $u$ is a regularized  reconstruction.
A higher $\deltaSNR$ value  means better reconstruction quality.
We have implemented the presented methods in Matlab 2017b.
We used functions of the toolboxes CircStat~\cite{berens2009circstat}, Manopt~\cite{manopt}, MVIRT~\cite{bavcak2016second}, 
and implementations from the authors' prior works \cite{weinmann2014total, bredies2017total}.
All examples were computed using $200$ iterations.
The cooling sequence $(\mu^k)_{k \in\N}$ used as stepsize in the gradient descent for computing the non-explicit proximal mappings was chosen as $\lambda^k = \mu^0 k^{-\tau}$
with $\tau = 1.$
For the spherical data we used
a stagewise cooling similar to the one in \cite{bredies2017total}, i.e., letting the sequence fixed to $\lambda_0$ for $50$ iterations in the first stage, use the moderate cooling $\tau = 0.35$ in the second stage until iteration $100$ and then the cooling $\tau = 0.55$ afterwards.
For the secondary regularization parameter, we used $\lambda_2 =0$ in all denoising experiments, 
and $\lambda_2 =0.0001$ in all deconvolution experiments.
The primary regularization parameter $\lambda_1$ was adjusted empirically
and is reported in the following.

Figure~\ref{fig:l1} shows the results of the proposed
 $\ell^1$ wavelet regularization for noise signal
 using the first order interpolatory wavelet and the third order Deslaurier-Dubuc (DD) wavelet for manifold valued data 
 given by the masks in \eqref{eq:TheBasicSchemesEmployed}.
 The noise parameters were chosen as $\kappa=5,$ $\sigma=$ and $\eta = 80,$ 
 for the $S^1$-valued, the $S^2$-valued and the $\Pos_3$-valued signals, respectively.
 The regularization parameters, $\lambda_1 = 4$ for the spherical data and  $\lambda_1 = 8$ for the $\Pos_3$ data, were chosen empirically.
 We observe that the noise is reduced significantly
 with a signal-to-noise-ratio improvement of up to around $8.9~\text{dB}$
by both the first order interpolatory wavelet and the third order Deslaurier-Dubuc (DD) wavelet. Both schemes yield comparable reconstruction results.

 Figure~\ref{fig:coeff_l1} displays the wavelet coefficients
 of the first signal in Figure~\ref{fig:l1} 
 before the first iteration (noisy data) and after the final iteration (reconstruction)
 for the first order interpolatory wavelet and the third order DD wavelet.
 We observe that also in the manifold setup the method has a shrinkage effect on the coefficients:
 only a few coefficients remain significantly larger than zero after
 the final iteration.

In Figure~\ref{fig:l0}, the $\ell^0$ variant of the proposed method is applied
to the same signals as in Figure~\ref{fig:l1}.
We observe that the noise is reduced significantly
with a signal-to-noise-ratio improvement which is comparable to the $\ell^1$ variant.	
We observe that  $\ell^0$ regularization and the $\ell^1$ variant perform equally well
with only slight differences depending on the particular signal under consideration.

 Figure~\ref{fig:deconv} shows the results of a deconvolution experiment. 
Here the original signal was convolved with the manifold analogue of a Gaussian kernel 
with standard deviation $2$ (set to zero outside a window of length $13$)  
before corrupting it with noise.
Then the signal was reconstructed using the proposed wavelet sparse regularization method
with the data term including the manifold-valued convolution operator as forward operator.
Here, we use generalized forward backward algorithm with Gau\ss-Seidel type update scheme
\eqref{eq:algFBwithGSupdateAndTraj}.
As the point deconvolution and denoising task is more involved than pure denoising
we used lower noise levels for these experiments; more precisely, for the noise generation, we used the parameters $\kappa=20,$
$\sigma = 0.1,$ and $\eta = 60$ for the $S^1,$ the $S^2,$ and the $\Pos_3$ data, respectively. 
The first regularization parameter $\lambda_1$
was set equal to $6,$ $4$ and $2,$ respectively.
As in the direct measurement case, the first order interpolatory wavelet and the third order DD wavelet
yield comparable reconstruction results. 

We proceed with experiments considering bivariate data.
We focus on denoising using $\ell^1$ wavelet regularization based on the manifold analogue of the tensor product third order DD wavelet.
 In Figure~\ref{fig:s1_2D} 
 a synthetic $S^1$-valued image was corrupted by von Mises noise ($\kappa=10$)
 and reconstructed using $\lambda_1 = 32.$
 In Figure~\ref{fig:Pos3_2D}, a synthetic $\Pos_3$-valued image was corrupted by Rician noise of level $\eta = 40$
 and reconstructed using $\lambda_1 = 8.$
 As in the univariate case, the proposed method reduces the noise significantly.
 We observe some block artifacts at the roundish structures which can be attributed to the tensor product structure.
Eventually we show the results on a real diffusion tensor image of a human brain
provided by the Camino project \cite{cook2006camino}\footnote{Data available at \url{http://camino.cs.ucl.ac.uk/}}.
The original tensors were computed from the diffusion weighted images by a least squares fit
based on the Stejskal-Tanner equation, and invalid tensors were filled by averages of their neighboring pixels.
The image shows a cutout of the axial slice number $20.$
Figure~\ref{fig:camino} shows the smoothing effect of the proposed method using $\lambda_1 = 12.$

\begin{figure}[!tp]
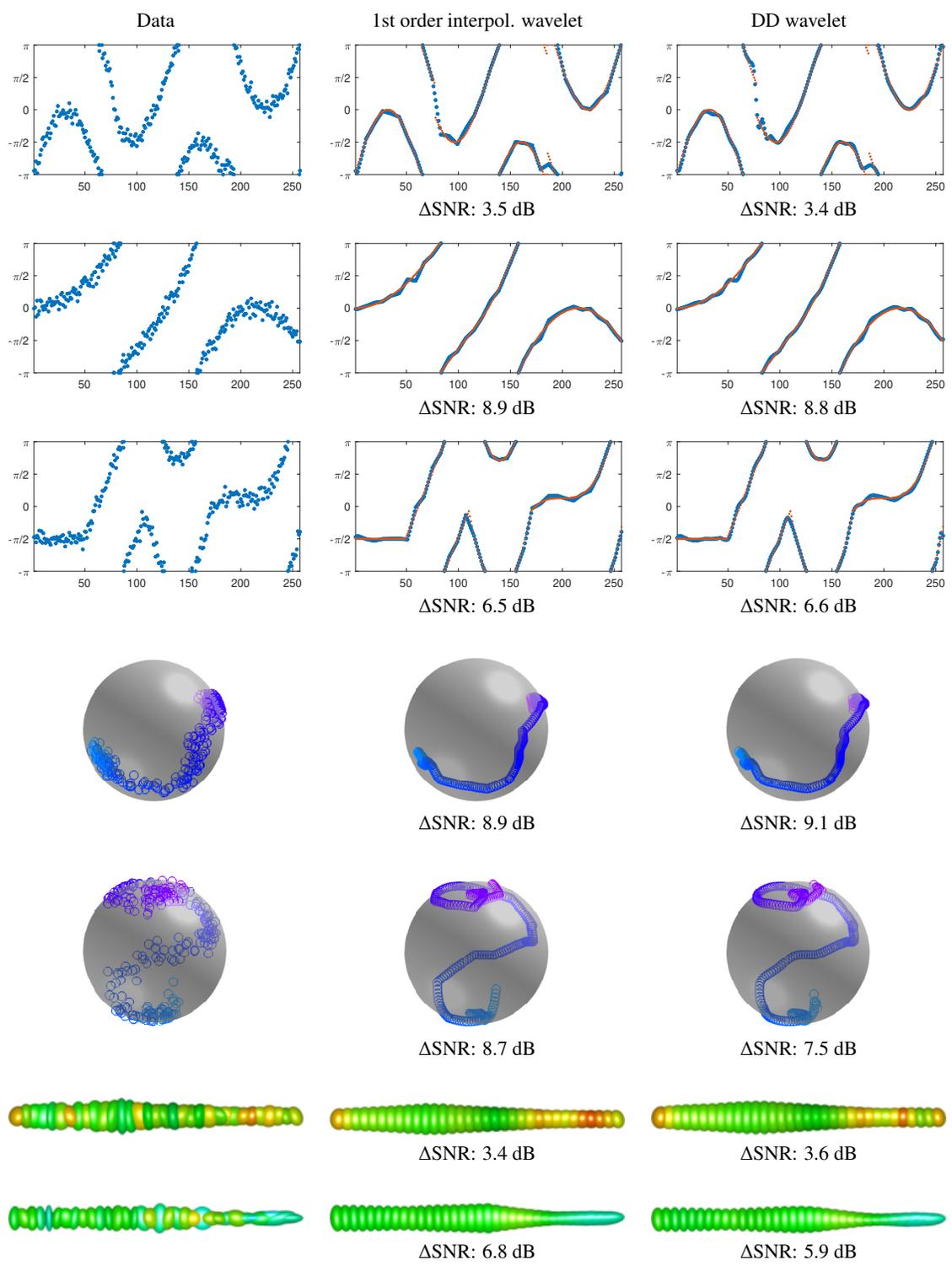

\def\figfolderA{experiments/exp_deconv_1D_traj/}
\def\figfolderB{experiments/exp_deconv_1D_S2_traj/}
\def\figfolderC{experiments/exp_deconv_1D_Pos3_traj/}
\def\hs{\hfill}
\def\vs{\vspace{0.03\textwidth}}
\def\figurewidth{0.3\textwidth}
\def\figurewidthB{0.15\textwidth}
\centering
{
\footnotesize
\begin{tabular}{ccc}
Data & 1st order interpol. wavelet  & DD wavelet \\[1ex]
\includegraphics[width=\figurewidth]{\figfolderA exp_deconv_1D_traj_Haar_data_1}   &
\includegraphics[width=\figurewidth]{\figfolderA exp_deconv_1D_traj_Haar_result_1} &
\includegraphics[width=\figurewidth]{\figfolderA exp_deconv_1D_traj_Spline_result_1}  \\
 &
$\deltaSNR$: $\input{\figfolderA exp_deconv_1D_traj_Haar_deltaSNR_1.txt}$ dB &
$\deltaSNR$: $\input{\figfolderA exp_deconv_1D_traj_Spline_deltaSNR_1.txt}$ dB \\[2ex]
\includegraphics[width=\figurewidth]{\figfolderA exp_deconv_1D_traj_Haar_data_2}  &
\includegraphics[width=\figurewidth]{\figfolderA exp_deconv_1D_traj_Haar_result_2} &
\includegraphics[width=\figurewidth]{\figfolderA exp_deconv_1D_traj_Spline_result_2} \\
&
$\deltaSNR$: $\input{\figfolderA exp_deconv_1D_traj_Haar_deltaSNR_2.txt}$ dB &
$\deltaSNR$: $\input{\figfolderA exp_deconv_1D_traj_Spline_deltaSNR_2.txt}$ dB \\[2ex]
\includegraphics[width=\figurewidth]{\figfolderA exp_deconv_1D_traj_Haar_data_3}  &
\includegraphics[width=\figurewidth]{\figfolderA exp_deconv_1D_traj_Haar_result_3} &
\includegraphics[width=\figurewidth]{\figfolderA exp_deconv_1D_traj_Spline_result_3} \\
&
$\deltaSNR$: $\input{\figfolderA exp_deconv_1D_traj_Haar_deltaSNR_3.txt}$ dB &
$\deltaSNR$: $\input{\figfolderA exp_deconv_1D_traj_Spline_deltaSNR_3.txt}$ dB \\[4ex]
\includegraphics[width=\figurewidthB]{\figfolderB exp_deconv_1D_S2_traj_Haar_data_1}   &
\includegraphics[width=\figurewidthB]{\figfolderB exp_deconv_1D_S2_traj_Haar_result_1} &
\includegraphics[width=\figurewidthB]{\figfolderB exp_deconv_1D_S2_traj_Spline_result_1}  
 \\
 &
$\deltaSNR$: $\input{\figfolderB exp_deconv_1D_S2_traj_Haar_deltaSNR_1.txt}$ dB &
$\deltaSNR$: $\input{\figfolderB exp_deconv_1D_S2_traj_Spline_deltaSNR_1.txt}$ dB \\[4ex]
\includegraphics[width=\figurewidthB]{\figfolderB exp_deconv_1D_S2_traj_Haar_data_2}   &
\includegraphics[width=\figurewidthB]{\figfolderB exp_deconv_1D_S2_traj_Haar_result_2} &
\includegraphics[width=\figurewidthB]{\figfolderB exp_deconv_1D_S2_traj_Spline_result_2}  
 \\
 &
$\deltaSNR$: $\input{\figfolderB exp_deconv_1D_S2_traj_Haar_deltaSNR_2.txt}$ dB &
$\deltaSNR$: $\input{\figfolderB exp_deconv_1D_S2_traj_Spline_deltaSNR_2.txt}$ dB \\[4ex]
\includegraphics[width=\figurewidth]{\figfolderC exp_deconv_1D_Pos3_traj_Haar_data_1}   &
\includegraphics[width=\figurewidth]{\figfolderC exp_deconv_1D_Pos3_traj_Haar_result_1} &
\includegraphics[width=\figurewidth]{\figfolderC exp_deconv_1D_Pos3_traj_Spline_result_1}  
 \\
 &
$\deltaSNR$: $\input{\figfolderC exp_deconv_1D_Pos3_traj_Haar_deltaSNR_1.txt}$ dB &
$\deltaSNR$: $\input{\figfolderC exp_deconv_1D_Pos3_traj_Spline_deltaSNR_1.txt}$ dB \\[4ex]
\includegraphics[width=\figurewidth]{\figfolderC exp_deconv_1D_Pos3_traj_Haar_data_2}   &
\includegraphics[width=\figurewidth]{\figfolderC exp_deconv_1D_Pos3_traj_Haar_result_2} &
\includegraphics[width=\figurewidth]{\figfolderC exp_deconv_1D_Pos3_traj_Spline_result_2}  
 \\
 &
$\deltaSNR$: $\input{\figfolderC exp_deconv_1D_Pos3_traj_Haar_deltaSNR_2.txt}$ dB &
$\deltaSNR$: $\input{\figfolderC exp_deconv_1D_Pos3_traj_Spline_deltaSNR_2.txt}$ dB 
\end{tabular}
}
\caption{
Results of the proposed variant for joint deconvolution and denoising. The given convolved noisy data is shown on the left, 
the result using the manifold analogue of the first order interpolatory wavelet and the third order Deslaurier-Dubuc (DD) wavelet
		along with the corresponding signal-to-noise-ratio improvement are shown in the center and on the right, respectively.
}
\label{fig:deconv}
\end{figure}

\begin{figure}[!tp]
\def\figfolderA{experiments/exp_denoising_2D_S1_2/}
\def\hs{\hfill}
\def\vs{\vspace{0.03\textwidth}}
\def\figurewidth{0.3\textwidth}
\def\figurewidthB{0.15\textwidth}
\centering
{
\footnotesize
\begin{tabular}{ccc}
\includegraphics[width=\figurewidth]{\figfolderA exp_S1_original} &
\includegraphics[width=\figurewidth]{\figfolderA exp_S1_data} &
\includegraphics[width=\figurewidth]{\figfolderA exp_S1_DD} 
\end{tabular}
}
	\caption{Denoising of a synthetic $S^1$-valued image.
	\emph{Left:} Original, \emph{Center:} Noisy data, \emph{Right:} Denoising using $\ell^1$ regularization and the DD wavelet
	 ($\deltaSNR$: $\protect\input{\figfolderA exp_S1_deltaSNR.txt}$ dB).
	}
	\label{fig:s1_2D}
\end{figure}

\begin{figure}[!tp]
\def\figfolderB{experiments/exp_denoising_2D_Pos3_2/}
\def\hs{\hfill}
\def\vs{\vspace{0.03\textwidth}}
\def\figurewidth{0.3\textwidth}
\def\figurewidthB{0.15\textwidth}
\centering
{
\footnotesize
\begin{tabular}{ccc}
\includegraphics[width=\figurewidth]{\figfolderB exp_Pos3_original} &
\includegraphics[width=\figurewidth]{\figfolderB exp_Pos3_data} &
\includegraphics[width=\figurewidth]{\figfolderB exp_Pos3_DD} 
\end{tabular}
}
	\caption{Denoising of a synthetic $\Pos_3$-valued image.
	\emph{Left:} Original, \emph{Center:} Noisy data, \emph{Right:} Denoising using $\ell^1$ regularization and the DD wavelet
 ($\deltaSNR$: $\protect\input{\figfolderB exp_Pos3_deltaSNR.txt}$ dB).
}
	\label{fig:Pos3_2D}
\end{figure}

\begin{figure}
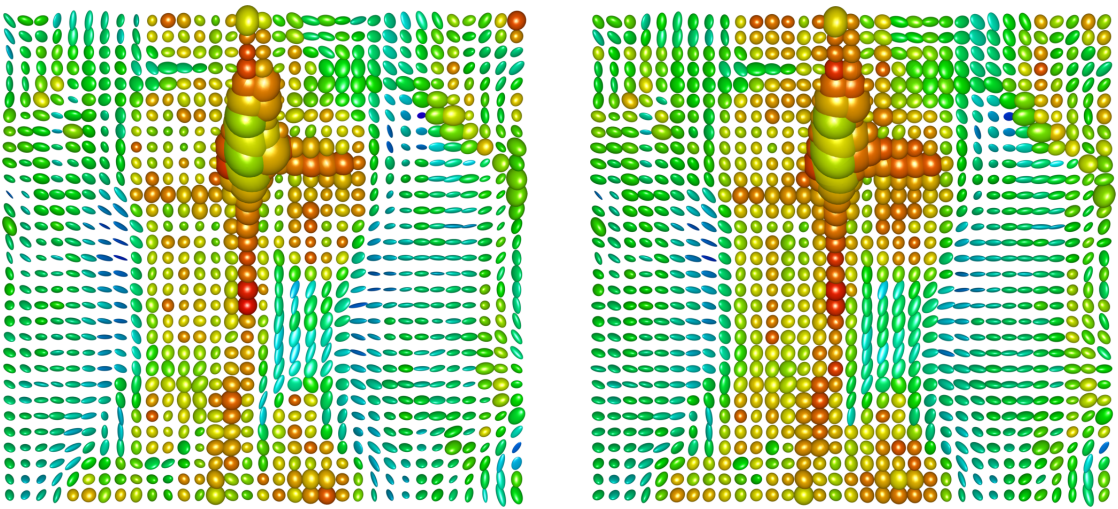

	\def\figfolderA{experiments/exp_camino_haar_2/}
\def\hs{\hfill}
\def\vs{\vspace{0.03\textwidth}}
\def\figurewidth{0.45\textwidth}
\def\figurewidthB{0.15\textwidth}
\centering
{
\footnotesize
\begin{tabular}{ccc}
\includegraphics[width=\figurewidth]{\figfolderA exp_Pos3_Camino_data} & &
\includegraphics[width=\figurewidth]{\figfolderA exp_Pos3_Camino_DD}
\end{tabular}
}
	\caption{\emph{Left:} Section of a diffusion tensor image of the human brain; \emph{Right:} Results of the proposed method (using $\ell^1$ sparse regularization based on the DD wavelet).}
	\label{fig:camino}
\end{figure}

\section{Conclusion}\label{sec:Conclusion}

In this paper 
we have introduced and studied a variational model for wavelet sparse regularization in the manifold setup.
We have proposed a variational scheme using manifold valued interpolatory wavelets in the regularizing term, and, in particular, we consider a sparsity promoting $\ell^1$ type 
as well as an $\ell^0$ type regularizing term.  
We have shown the existence of minimizers for the proposed models.
Further, we have provided algorithms for the proposed models.
We have shown how to implement  the concepts of a generalized forward backward-scheme with Gauss-Seidel type update and a trajectory method as well as the well-established concept of a cyclic proximal point algorithm
for wavelet sparse regularization in the manifold setup.  
To this end, we have derived expressions for 
the (sub)gradients and proximal mappings of the atoms of the wavelet regularizing terms.
This includes the manifold analogues of $\ell^1$ and $\ell^0$ sparse wavelet regularization.
Finally, we have shown the potential of the proposed algorithms in the experimental section by applying them to data living in the unit circle, in the two-dimensional sphere as well as in the space of positive matrices.

\section*{Acknowledgment}

M. Storath was supported by the German Research Foundation DFG Grant STO1126/2-1.
A. Weinmann was supported by the German Research Foundation DFG Grants WE5886/4-1, WE5886/3-1.

{\small
	\bibliographystyle{plainnat}
	\bibliography{intwavvar}

\begin{thebibliography}{93}
\providecommand{\natexlab}[1]{#1}
\providecommand{\url}[1]{\texttt{#1}}
\expandafter\ifx\csname urlstyle\endcsname\relax
  \providecommand{\doi}[1]{doi: #1}\else
  \providecommand{\doi}{doi: \begingroup \urlstyle{rm}\Url}\fi

\bibitem[Absil et~al.(2004)Absil, Mahony, and Sepulchre]{absil2004riemannian}
P.-A. Absil, R.~Mahony, and R.~Sepulchre.
\newblock Riemannian geometry of {G}rassmann manifolds with a view on
  algorithmic computation.
\newblock \emph{Acta Applicandae Mathematicae}, 80\penalty0 (2):\penalty0
  199--220, 2004.

\bibitem[Absil et~al.(2009)Absil, Mahony, and Sepulchre]{absil2009optimization}
P.-A. Absil, R.~Mahony, and R.~Sepulchre.
\newblock \emph{Optimization algorithms on matrix manifolds}.
\newblock Princeton University Press, 2009.

\bibitem[Adato et~al.(2011)Adato, Zickler, and Ben-Shahar]{adato2011polar}
Y.~Adato, T.~Zickler, and O.~Ben-Shahar.
\newblock A polar representation of motion and implications for optical flow.
\newblock In \emph{IEEE Conference on Computer Vision and Pattern Recognition
  (CVPR)}, pages 1145--1152, 2011.

\bibitem[Afsari(2011)]{afsari2011riemannian}
B.~Afsari.
\newblock Riemannian $l^p$ center of mass: existence, uniqueness, and
  convexity.
\newblock \emph{Proceedings of the American Mathematical Society}, 139\penalty0
  (2):\penalty0 655--673, 2011.

\bibitem[Afsari et~al.(2013)Afsari, Tron, and Vidal]{afsari2013convergence}
B.~Afsari, R.~Tron, and R.~Vidal.
\newblock On the convergence of gradient descent for finding the riemannian
  center of mass.
\newblock \emph{SIAM Journal on Control and Optimization}, 51\penalty0
  (3):\penalty0 2230--2260, 2013.

\bibitem[Alexander et~al.(2007)Alexander, Lee, Lazar, Boudos, DuBray, Oakes,
  Miller, Lu, Jeong, McMahon, et~al.]{alexander2007diffusion}
A.~Alexander, J.~Lee, M.~Lazar, R.~Boudos, M.~DuBray, T.~Oakes, J.~Miller,
  J.~Lu, E.-K. Jeong, W.~McMahon, et~al.
\newblock Diffusion tensor imaging of the corpus callosum in autism.
\newblock \emph{Neuroimage}, 34\penalty0 (1):\penalty0 61--73, 2007.

\bibitem[Arnaudon et~al.(2013)Arnaudon, Barbaresco, and
  Yang]{arnaudon2013medians}
M.~Arnaudon, F.~Barbaresco, and L.~Yang.
\newblock Medians and means in {R}iemannian geometry: Existence, uniqueness and
  computation.
\newblock In \emph{Matrix Information Geometry}, pages 169--197. Springer,
  2013.

\bibitem[Azagra and Ferrera(2005)]{azagra2005proximal}
D.~Azagra and J.~Ferrera.
\newblock Proximal calculus on {R}iemannian manifolds.
\newblock \emph{Mediterranean Journal of Mathematics}, 2:\penalty0 437--450,
  2005.

\bibitem[Ba{\v{c}}{\'a}k(2013)]{bavcak2013proximal}
M.~Ba{\v{c}}{\'a}k.
\newblock The proximal point algorithm in metric spaces.
\newblock \emph{Israel Journal of Mathematics}, 194\penalty0 (2):\penalty0
  689--701, 2013.

\bibitem[Ba{\v{c}}{\'a}k(2014{\natexlab{a}})]{bacak2014convex}
M.~Ba{\v{c}}{\'a}k.
\newblock \emph{Convex analysis and optimization in Hadamard spaces}.
\newblock de Gruyter, 2014{\natexlab{a}}.

\bibitem[Ba{\v{c}}{\'a}k(2014{\natexlab{b}})]{bavcak2013computing}
M.~Ba{\v{c}}{\'a}k.
\newblock Computing medians and means in {H}adamard spaces.
\newblock \emph{SIAM Journal on Optimization}, 24\penalty0 (3):\penalty0
  1542--1566, 2014{\natexlab{b}}.

\bibitem[Ba{\v{c}}{\'a}k et~al.(2016)Ba{\v{c}}{\'a}k, Bergmann, Steidl, and
  Weinmann]{bavcak2016second}
M.~Ba{\v{c}}{\'a}k, R.~Bergmann, G.~Steidl, and A.~Weinmann.
\newblock A second order non-smooth variational model for restoring
  manifold-valued images.
\newblock \emph{SIAM Journal on Scientific Computing}, 38\penalty0
  (1):\penalty0 A567--A597, 2016.

\bibitem[Basser et~al.(1994)Basser, Mattiello, and LeBihan]{basser1994mr}
P.~Basser, J.~Mattiello, and D.~LeBihan.
\newblock {M}{R} diffusion tensor spectroscopy and imaging.
\newblock \emph{Biophysical Journal}, 66\penalty0 (1):\penalty0 259--267, 1994.

\bibitem[Basu et~al.(2006)Basu, Fletcher, and Whitaker]{basu2006rician}
S.~Basu, T.~Fletcher, and R.~Whitaker.
\newblock Rician noise removal in diffusion tensor {MRI}.
\newblock In \emph{Medical Image Computing and Computer-Assisted Intervention
  2006}, pages 117--125. Springer, 2006.

\bibitem[Baust et~al.(2015)Baust, Demaret, Storath, Navab, and
  Weinmann]{baust2015total}
M.~Baust, L.~Demaret, M.~Storath, N.~Navab, and A.~Weinmann.
\newblock Total variation regularization of shape signals.
\newblock In \emph{IEEE Conference on Computer Vision and Pattern Recognition
  (CVPR)}, pages 2075--2083, 2015.

\bibitem[Baust et~al.(2016)Baust, Weinmann, Wieczorek, Lasser, Storath, and
  Navab]{baust2016combined}
M.~Baust, A.~Weinmann, M.~Wieczorek, T.~Lasser, M.~Storath, and N.~Navab.
\newblock Combined tensor fitting and {TV} regularization in diffusion tensor
  imaging based on a {R}iemannian manifold approach.
\newblock \emph{IEEE Transactions on Medical Imaging}, 35\penalty0
  (8):\penalty0 1972--1989, 2016.

\bibitem[Berens(2009)]{berens2009circstat}
P.~Berens.
\newblock Circstat: a {MATLAB} toolbox for circular statistics.
\newblock \emph{J Stat Softw}, 31\penalty0 (10):\penalty0 1--21, 2009.

\bibitem[Bergmann et~al.(2014)Bergmann, Laus, Steidl, and
  Weinmann]{bergmann2014second}
R.~Bergmann, F.~Laus, G.~Steidl, and A.~Weinmann.
\newblock Second order differences of cyclic data and applications in
  variational denoising.
\newblock \emph{SIAM Journal on Imaging Sciences}, 7\penalty0 (4):\penalty0
  2916--2953, 2014.

\bibitem[Bergmann et~al.(2016)Bergmann, Chan, Hielscher, Persch, and
  Steidl]{steidl16half_quadratic}
R.~Bergmann, R.~H. Chan, R.~Hielscher, J.~Persch, and G.~Steidl.
\newblock Restoration of manifold-valued images by half-quadratic minimization.
\newblock \emph{Inverse Problems and Imaging}, 10:\penalty0 281–304, 2016.

\bibitem[Berkels et~al.(2013)Berkels, Fletcher, Heeren, Rumpf, and
  Wirth]{berkels2013discrete}
B.~Berkels, P.~Fletcher, B.~Heeren, M.~Rumpf, and B.~Wirth.
\newblock Discrete geodesic regression in shape space.
\newblock In \emph{International Workshop on Energy Minimization Methods in
  Computer Vision and Pattern Recognition}, pages 108--122. Springer, 2013.

\bibitem[Bertsekas(2011)]{Bertsekas2011in}
D.~Bertsekas.
\newblock Incremental proximal methods for large scale convex optimization.
\newblock \emph{Mathematical Programming}, 129:\penalty0 163--195, 2011.

\bibitem[Bhattacharya and Patrangenaru(2003)]{bhattacharya2003large}
R.~Bhattacharya and V.~Patrangenaru.
\newblock Large sample theory of intrinsic and extrinsic sample means on
  manifolds {I}.
\newblock \emph{Annals of Statistics}, 31\penalty0 (1):\penalty0 1--29, 2003.

\bibitem[Bhattacharya and Patrangenaru(2005)]{bhattacharya2005large}
R.~Bhattacharya and V.~Patrangenaru.
\newblock Large sample theory of intrinsic and extrinsic sample means on
  manifolds {II}.
\newblock \emph{Annals of Statistics}, 33\penalty0 (3):\penalty0 1225--1259,
  2005.

\bibitem[Boumal et~al.(2014)Boumal, Mishra, Absil, and Sepulchre]{manopt}
N.~Boumal, B.~Mishra, P.-A. Absil, and R.~Sepulchre.
\newblock {M}anopt, a {M}atlab toolbox for optimization on manifolds.
\newblock \emph{Journal of Machine Learning Research}, 15:\penalty0 1455--1459,
  2014.
\newblock URL \url{http://www.manopt.org}.

\bibitem[Bredies et~al.(2018)Bredies, Holler, Storath, and
  Weinmann]{bredies2017total}
K.~Bredies, M.~Holler, M.~Storath, and A.~Weinmann.
\newblock Total generalized variation for manifold-valued data.
\newblock \emph{SIAM Journal on Imaging Sciences}, 11\penalty0 (3):\penalty0
  1785--1848, 2018.

\bibitem[Chambolle et~al.(1998)Chambolle, De~Vore, Lee, and
  Lucier]{chambolle1998nonlinear}
A.~Chambolle, R.~De~Vore, N.~Lee, and B.~Lucier.
\newblock Nonlinear wavelet image processing: variational problems,
  compression, and noise removal through wavelet shrinkage.
\newblock \emph{IEEE Transactions on Image Processing}, 7\penalty0
  (3):\penalty0 319--335, 1998.

\bibitem[Chan et~al.(2001)Chan, Kang, and Shen]{chan2001total}
T.~Chan, S.~Kang, and J.~Shen.
\newblock Total variation denoising and enhancement of color images based on
  the {CB} and {HSV} color models.
\newblock \emph{Journal of Visual Communication and Image Representation},
  12:\penalty0 422--435, 2001.

\bibitem[Cheeger and Ebin(1975)]{cheeger1975comparison}
J.~Cheeger and D.~Ebin.
\newblock \emph{Comparison theorems in Riemannian geometry}, volume~9.
\newblock North-Holland, 1975.

\bibitem[Chefd'Hotel et~al.(2004)Chefd'Hotel, Tschumperl{\'e}, Deriche, and
  Faugeras]{chefd2004regularizing}
C.~Chefd'Hotel, D.~Tschumperl{\'e}, R.~Deriche, and O.~Faugeras.
\newblock Regularizing flows for constrained matrix-valued images.
\newblock \emph{Journal of Mathematical Imaging and Vision}, 20\penalty0
  (1-2):\penalty0 147--162, 2004.

\bibitem[Cohen et~al.(1993)Cohen, Daubechies, and Vial]{cohen1993wavelets}
A.~Cohen, I.~Daubechies, and P.~Vial.
\newblock Wavelets on the interval and fast wavelet transforms.
\newblock \emph{Applied and computational harmonic analysis}, 1993.

\bibitem[Cook et~al.(2006)Cook, Bai, Nedjati-Gilani, Seunarine, Hall, Parker,
  and Alexander]{cook2006camino}
P.~Cook, Y.~Bai, S.~Nedjati-Gilani, K.~Seunarine, M.~Hall, G.~Parker, and
  D.~Alexander.
\newblock Camino: Open-source diffusion-{MRI} reconstruction and processing.
\newblock In \emph{14th Scientific Meeting of the International Society for
  Magnetic Resonance in Medicine}, page 2759, 2006.

\bibitem[Deslauriers and Dubuc(1989)]{deslauriers1989symmetric}
G.~Deslauriers and S.~Dubuc.
\newblock Symmetric iterative interpolation processes.
\newblock In \emph{Constructive approximation}, pages 49--68. Springer, 1989.

\bibitem[do~Carmo(1992)]{do1992riemannian}
M.~do~Carmo.
\newblock \emph{Riemannian geometry.}
\newblock Birkhauser, 1992.

\bibitem[Donoho(1992)]{donoho1992interpolating}
D.~Donoho.
\newblock Interpolating wavelet transforms.
\newblock \emph{Preprint, Department of Statistics, Stanford University},
  2\penalty0 (3), 1992.

\bibitem[Donoho(1995)]{donoho1995noising}
D.~Donoho.
\newblock De-noising by soft-thresholding.
\newblock \emph{IEEE Transactions on Information Theory}, 41\penalty0
  (3):\penalty0 613--627, 1995.

\bibitem[Dyn et~al.(1987)Dyn, Levin, and Gregory]{dyn19874}
N.~Dyn, D.~Levin, and J.~Gregory.
\newblock A 4-point interpolatory subdivision scheme for curve design.
\newblock \emph{Computer aided geometric design}, 4\penalty0 (4):\penalty0
  257--268, 1987.

\bibitem[Ferreira and Oliveira(2002)]{ferreira2002proximal}
O.~Ferreira and P.~Oliveira.
\newblock Proximal point algorithm on {R}iemannian manifolds.
\newblock \emph{Optimization}, 51:\penalty0 257--270, 2002.

\bibitem[Ferreira et~al.(2013)Ferreira, Xavier, Costeira, and
  Barroso]{ferreira2013newton}
R.~Ferreira, J.~Xavier, J.~Costeira, and V.~Barroso.
\newblock Newton algorithms for {R}iemannian distance related problems on
  connected locally symmetric manifolds.
\newblock \emph{IEEE Journal of Selected Topics in Signal Processing},
  7:\penalty0 634--645, 2013.

\bibitem[Fletcher and Joshi(2007)]{fletcher2007riemannian}
P.~Fletcher and S.~Joshi.
\newblock {R}iemannian geometry for the statistical analysis of diffusion
  tensor data.
\newblock \emph{Signal Processing}, 87:\penalty0 250--262, 2007.

\bibitem[Fletcher et~al.(2004)Fletcher, Lu, Pizer, and
  Joshi]{fletcher2004principal}
P.~Fletcher, C.~Lu, S.~Pizer, and S.~Joshi.
\newblock Principal geodesic analysis for the study of nonlinear statistics of
  shape.
\newblock \emph{IEEE Transactions on Medical Imaging}, 23\penalty0
  (8):\penalty0 995--1005, 2004.

\bibitem[Foong et~al.(2000)Foong, Maier, Clark, Barker, Miller, and
  Ron]{foong2000neuropathological}
J.~Foong, M.~Maier, C.~Clark, G.~Barker, D.~Miller, and M.~Ron.
\newblock Neuropathological abnormalities of the corpus callosum in
  schizophrenia: a diffusion tensor imaging study.
\newblock \emph{Journal of Neurology, Neurosurgery \& Psychiatry}, 68\penalty0
  (2):\penalty0 242--244, 2000.

\bibitem[Giaquinta and Mucci(2006)]{GM06}
M.~Giaquinta and D.~Mucci.
\newblock The {BV}-energy of maps into a manifold: relaxation and density
  results.
\newblock \emph{Annali della Scuola Normale Superiore di Pisa-Classe di
  Scienze}, 5\penalty0 (4):\penalty0 483--548, 2006.

\bibitem[Giaquinta and Mucci(2007)]{GM07}
M.~Giaquinta and D.~Mucci.
\newblock Maps of bounded variation with values into a manifold: total
  variation and relaxed energy.
\newblock \emph{Pure and Applied Mathematics Quarterly}, 3\penalty0
  (2):\penalty0 513--538, 2007.

\bibitem[Giaquinta et~al.(1993)Giaquinta, Modica, and Sou{\v c}ek]{GMS93}
M.~Giaquinta, G.~Modica, and J.~Sou{\v c}ek.
\newblock Variational problems for maps of bounded variation with values in
  {$S^1$}.
\newblock \emph{Calculus of Variations and Partial Differential Equations},
  1\penalty0 (1):\penalty0 87--121, 1993.

\bibitem[Grohs(2008)]{grohs2008smoothness}
P.~Grohs.
\newblock Smoothness analysis of subdivision schemes on regular grids by
  proximity.
\newblock \emph{SIAM Journal on Numerical Analysis}, 46\penalty0 (4):\penalty0
  2169--2182, 2008.

\bibitem[Grohs(2010{\natexlab{a}})]{grohs2010general}
P.~Grohs.
\newblock A general proximity analysis of nonlinear subdivision schemes.
\newblock \emph{SIAM Journal on Mathematical Analysis}, 42\penalty0
  (2):\penalty0 729--750, 2010{\natexlab{a}}.

\bibitem[Grohs(2010{\natexlab{b}})]{grohs2010stability}
P.~Grohs.
\newblock Stability of manifold-valued subdivision schemes and multiscale
  transformations.
\newblock \emph{Constructive Approximation}, 32\penalty0 (3):\penalty0
  569--596, 2010{\natexlab{b}}.

\bibitem[Grohs and Hosseini(2016)]{grohs2016varepsilon}
P.~Grohs and S.~Hosseini.
\newblock $\varepsilon$-subgradient algorithms for locally {L}ipschitz
  functions on {R}iemannian manifolds.
\newblock \emph{Advances in Computational Mathematics}, 42\penalty0
  (2):\penalty0 333--360, 2016.

\bibitem[Grohs and Sprecher(2016)]{grohs2016total}
P.~Grohs and M.~Sprecher.
\newblock Total variation regularization on {R}iemannian manifolds by
  iteratively reweighted minimization.
\newblock \emph{Information and Inference}, 5\penalty0 (4):\penalty0 353--378,
  2016.

\bibitem[Grohs and Wallner(2009)]{grohs2009interpolatory}
P.~Grohs and J.~Wallner.
\newblock Interpolatory wavelets for manifold-valued data.
\newblock \emph{Applied and Computational Harmonic Analysis}, 27\penalty0
  (3):\penalty0 325--333, 2009.

\bibitem[Grohs et~al.(2015)Grohs, Hardering, and Sander]{grohs2013optimal}
P.~Grohs, H.~Hardering, and O.~Sander.
\newblock Optimal a priori discretization error bounds for geodesic finite
  elements.
\newblock \emph{Foundations of Computational Mathematics}, 15\penalty0
  (6):\penalty0 1357--1411, 2015.

\bibitem[Han(2002)]{HanComputingSmooth}
B.~Han.
\newblock Computing the smoothness exponent of a symmetric multivariate
  refinable function.
\newblock \emph{SIAM J. Matrix Anal. Appl.}, 24:\penalty0 693--714, 2002.
\newblock ISSN 0895-4798.
\newblock \doi{http://dx.doi.org/10.1137/S0895479801390868}.

\bibitem[Han(2006)]{HanSol}
B.~Han.
\newblock Solutions in {S}obolev spaces of vector refinement equations with a
  general dilation matrix.
\newblock \emph{Adv. Comput. Math.}, 24:\penalty0 375--403, 2006.

\bibitem[Han and Jia(1998)]{HanConvergence}
B.~Han and R.-Q. Jia.
\newblock Multivariate refinement equations and convergence of subdivision
  schemes.
\newblock \emph{SIAM J. Math. Anal.}, 29:\penalty0 1177--1199, 1998.
\newblock ISSN 0036-1410.
\newblock \doi{http://dx.doi.org/10.1137/S0036141097294032}.

\bibitem[Hawe et~al.(2013)Hawe, Seibert, and Kleinsteuber]{hawe2013separable}
S.~Hawe, M.~Seibert, and M.~Kleinsteuber.
\newblock Separable dictionary learning.
\newblock In \emph{Proceedings of the IEEE Conference on Computer Vision and
  Pattern Recognition}, pages 438--445, 2013.

\bibitem[Johansen-Berg and Behrens(2009)]{johansen2009diffusion}
H.~Johansen-Berg and T.~Behrens.
\newblock \emph{Diffusion MRI: From quantitative measurement to in-vivo
  neuroanatomy}.
\newblock Academic Press, London, 2009.

\bibitem[Karcher(1977)]{karcher1977riemannian}
H.~Karcher.
\newblock {R}iemannian center of mass and mollifier smoothing.
\newblock \emph{Communications on Pure and Applied Mathematics}, 30:\penalty0
  509--541, 1977.

\bibitem[Kendall(1990)]{kendall1990probability}
W.~Kendall.
\newblock Probability, convexity, and harmonic maps with small image {I}:
  uniqueness and fine existence.
\newblock \emph{Proceedings of the London Mathematical Society}, 3:\penalty0
  371--406, 1990.

\bibitem[Kimmel and Sochen(2002)]{kimmel2002orientation}
R.~Kimmel and N.~Sochen.
\newblock Orientation diffusion or how to comb a porcupine.
\newblock \emph{Journal of Visual Communication and Image Representation},
  13\penalty0 (1):\penalty0 238--248, 2002.

\bibitem[Lai and Osher(2014)]{lai2014splitting}
R.~Lai and S.~Osher.
\newblock A splitting method for orthogonality constrained problems.
\newblock \emph{Journal of Scientific Computing}, 58\penalty0 (2):\penalty0
  431--449, 2014.

\bibitem[Lellmann et~al.(2013)Lellmann, Strekalovskiy, Koetter, and
  Cremers]{LSKC13}
J.~Lellmann, E.~Strekalovskiy, S.~Koetter, and D.~Cremers.
\newblock Total variation regularization for functions with values in a
  manifold.
\newblock In \emph{International Conference on Computer Vision (ICCV)}, pages
  2944--2951, 2013.

\bibitem[Mardia and Jupp(2009)]{mardia2009directional}
K.~Mardia and P.~Jupp.
\newblock \emph{Directional statistics}, volume 494.
\newblock John Wiley \& Sons, 2009.

\bibitem[Massonnet and Feigl(1998)]{massonnet1998radar}
D.~Massonnet and K.~Feigl.
\newblock Radar interferometry and its application to changes in the earth's
  surface.
\newblock \emph{Reviews of Geophysics}, 36\penalty0 (4):\penalty0 441--500,
  1998.

\bibitem[Michor and Mumford(2007)]{Michor07}
P.~Michor and D.~Mumford.
\newblock An overview of the {R}iemannian metrics on spaces of curves using the
  {H}amiltonian approach.
\newblock \emph{Applied and Computational Harmonic Analysis}, 23\penalty0
  (1):\penalty0 74 -- 113, 2007.
\newblock ISSN 1063-5203.

\bibitem[Moreau(1962)]{moreau1962fonctions}
J.-J. Moreau.
\newblock Fonctions convexes duales et points proximaux dans un espace
  hilbertien.
\newblock \emph{Comptes Rendus de l'Académie des Sciences. Series A
  Mathematics.}, 255:\penalty0 2897--2899, 1962.

\bibitem[Oller and Corcuera(1995)]{oller1995intrinsic}
J.~Oller and J.~Corcuera.
\newblock Intrinsic analysis of statistical estimation.
\newblock \emph{Annals of Statistics}, 23\penalty0 (5):\penalty0 1562--1581,
  1995.

\bibitem[Pennec(2006)]{pennec2006intrinsic}
X.~Pennec.
\newblock Intrinsic statistics on {R}iemannian manifolds: {B}asic tools for
  geometric measurements.
\newblock \emph{Journal of Mathematical Imaging and Vision}, 25\penalty0
  (1):\penalty0 127--154, 2006.

\bibitem[Pennec et~al.(2006)Pennec, Fillard, and Ayache]{pennec2006riemannian}
X.~Pennec, P.~Fillard, and N.~Ayache.
\newblock A {R}iemannian framework for tensor computing.
\newblock \emph{International Journal of Computer Vision}, 66:\penalty0 41--66,
  2006.

\bibitem[Rao(1945)]{radhakrishna1945information}
C.~Rao.
\newblock Information and accuracy attainable in the estimation of statistical
  parameters.
\newblock \emph{Bulletin of the Calcutta Mathematical Society}, 37\penalty0
  (3):\penalty0 81--91, 1945.

\bibitem[Rosman et~al.(2012)Rosman, Bronstein, Bronstein, Wolf, and
  Kimmel]{rosman2012group}
G.~Rosman, M.~Bronstein, A.~Bronstein, A.~Wolf, and R.~Kimmel.
\newblock Group-valued regularization framework for motion segmentation of
  dynamic non-rigid shapes.
\newblock In \emph{Scale Space and Variational Methods in Computer Vision},
  pages 725--736. Springer, 2012.

\bibitem[Sander(2015)]{sander2015geodesic}
O.~Sander.
\newblock Geodesic finite elements of higher order.
\newblock \emph{IMA Journal of Numerical Analysis}, 36\penalty0 (1):\penalty0
  238--266, 2015.

\bibitem[Schultz(1990)]{schultz1990circular}
H.~Schultz.
\newblock A circular median filter approach for resolving directional
  ambiguities in wind fields retrieved from spaceborne scatterometer data.
\newblock \emph{Journal of Geophysical Research: Oceans}, 95\penalty0
  (C4):\penalty0 5291--5303, 1990.

\bibitem[Spivak(1975)]{spivak1975differential}
M.~Spivak.
\newblock \emph{Differential Geometry}.
\newblock Publish or Perish, Berkeley, 1975.

\bibitem[Stefanoiu et~al.(2016)Stefanoiu, Weinmann, Storath, Navab, and
  Baust]{stefanoiu2016joint}
A.~Stefanoiu, A.~Weinmann, M.~Storath, N.~Navab, and M.~Baust.
\newblock Joint segmentation and shape regularization with a generalized
  forward--backward algorithm.
\newblock \emph{IEEE Transactions on Image Processing}, 25\penalty0
  (7):\penalty0 3384--3394, 2016.

\bibitem[Storath and Weinmann(2018{\natexlab{a}})]{storath2017fastMedian}
M.~Storath and A.~Weinmann.
\newblock Fast median filtering for phase or orientation data.
\newblock \emph{IEEE Transactions on Pattern Analysis and Machine
  Intelligence}, 40\penalty0 (3):\penalty0 639--652, 2018{\natexlab{a}}.

\bibitem[Storath and Weinmann(2018{\natexlab{b}})]{storath2018variational}
M~Storath and A~Weinmann.
\newblock Variational regularization of inverse problems for manifold-valued
  data.
\newblock \emph{arXiv preprint arXiv:1804.10432}, 2018{\natexlab{b}}.

\bibitem[Storath et~al.(2016)Storath, Weinmann, and Unser]{storath2016exact}
M.~Storath, A.~Weinmann, and M.~Unser.
\newblock Exact algorithms for {$L^1$}-{TV} regularization of real-valued or
  circle-valued signals.
\newblock \emph{SIAM Journal on Scientific Computing}, 38\penalty0
  (1):\penalty0 A614--A630, 2016.

\bibitem[Storath et~al.(2017)Storath, Weinmann, and Unser]{storath2017jump}
M.~Storath, A.~Weinmann, and M.~Unser.
\newblock Jump-penalized least absolute values estimation of scalar or
  circle-valued signals.
\newblock \emph{Information and Inference}, 6\penalty0 (3):\penalty0 225--245,
  2017.

\bibitem[Strekalovskiy and Cremers(2011)]{SC11}
E.~Strekalovskiy and D.~Cremers.
\newblock {Total variation for cyclic structures: Convex relaxation and
  efficient minimization}.
\newblock In \emph{IEEE Conference on Computer Vision and Pattern Recognition
  (CVPR)}, pages 1905--1911, 2011.

\bibitem[Tibshirani(1996)]{tibshirani1996regression}
R.~Tibshirani.
\newblock Regression shrinkage and selection via the lasso.
\newblock \emph{Journal of the Royal Statistical Society. Series B
  (Methodological)}, pages 267--288, 1996.

\bibitem[Tron et~al.(2013)Tron, Afsari, and Vidal]{tron2013riemannian}
R.~Tron, B.~Afsari, and R.~Vidal.
\newblock Riemannian consensus for manifolds with bounded curvature.
\newblock \emph{IEEE Transactions on Automatic Control}, 58\penalty0
  (4):\penalty0 921--934, 2013.

\bibitem[Tschumperl{\'e} and Deriche(2001)]{tschumperle2001diffusion}
D.~Tschumperl{\'e} and R.~Deriche.
\newblock Diffusion tensor regularization with constraints preservation.
\newblock In \emph{IEEE Conference on Computer Vision and Pattern Recognition
  (CVPR)}, pages I948--I953, 2001.

\bibitem[Ur~Rahman et~al.(2005)Ur~Rahman, Drori, Donoho, and
  Schr{\"o}der]{rahman2005multiscale}
I.~Ur~Rahman, V.~Drori, I.and~Stodden, D.~Donoho, and P.~Schr{\"o}der.
\newblock Multiscale representations for manifold-valued data.
\newblock \emph{Multiscale Modeling \& Simulation}, 4\penalty0 (4):\penalty0
  1201--1232, 2005.

\bibitem[Vese and Osher(2002)]{vese2002numerical}
L.~Vese and S.~Osher.
\newblock Numerical methods for p-harmonic flows and applications to image
  processing.
\newblock \emph{SIAM Journal on Numerical Analysis}, 40:\penalty0 2085--2104,
  2002.

\bibitem[Wallner and Dyn(2005)]{wallner2005convergence}
J.~Wallner and N.~Dyn.
\newblock Convergence and {C1} analysis of subdivision schemes on manifolds by
  proximity.
\newblock \emph{Computer Aided Geometric Design}, 22\penalty0 (7):\penalty0
  593--622, 2005.

\bibitem[Wallner et~al.(2011)Wallner, Yazdani, and
  Weinmann]{wallner2011convergence}
J.~Wallner, E.~Yazdani, and A.~Weinmann.
\newblock Convergence and smoothness analysis of subdivision rules in
  {R}iemannian and symmetric spaces.
\newblock \emph{Advances in Computational Mathematics}, 34\penalty0
  (2):\penalty0 201--218, 2011.

\bibitem[Weaver et~al.(1991)Weaver, Xu, Healy, and
  Cromwell]{weaver1991filtering}
J.~Weaver, Y.~Xu, D.~Healy, and L.~Cromwell.
\newblock Filtering noise from images with wavelet transforms.
\newblock \emph{Magnetic Resonance in Medicine}, 21\penalty0 (2):\penalty0
  288--295, 1991.

\bibitem[Weinmann(2010)]{weinmannConstrApprox}
A.~Weinmann.
\newblock Nonlinear subdivision schemes on irregular meshes.
\newblock \emph{Constructive Approximation}, 31\penalty0 (3):\penalty0
  395--415, 2010.

\bibitem[Weinmann(2012{\natexlab{a}})]{weinmann2012interpolatory}
A.~Weinmann.
\newblock Interpolatory multiscale representation for functions between
  manifolds.
\newblock \emph{SIAM Journal on Mathematical Analysis}, 44:\penalty0 162--191,
  2012{\natexlab{a}}.

\bibitem[Weinmann(2012{\natexlab{b}})]{weinmann2012subdivision}
A.~Weinmann.
\newblock Subdivision schemes with general dilation in the geometric and
  nonlinear setting.
\newblock \emph{Journal of Approximation Theory}, 164\penalty0 (1):\penalty0
  105--137, 2012{\natexlab{b}}.

\bibitem[Weinmann et~al.(2014)Weinmann, Demaret, and
  Storath]{weinmann2014total}
A.~Weinmann, L.~Demaret, and M.~Storath.
\newblock Total variation regularization for manifold-valued data.
\newblock \emph{SIAM Journal on Imaging Sciences}, 7\penalty0 (4):\penalty0
  2226--2257, 2014.

\bibitem[Weinmann et~al.(2016)Weinmann, Demaret, and
  Storath]{weinmann2015mumford}
A.~Weinmann, L.~Demaret, and M.~Storath.
\newblock {M}umford--{S}hah and {P}otts regularization for manifold-valued
  data.
\newblock \emph{Journal of Mathematical Imaging and Vision}, 55\penalty0
  (3):\penalty0 428--445, 2016.

\bibitem[Xie and Yu(2008)]{xie2008smoothness}
G.~Xie and T.~Yu.
\newblock Smoothness equivalence properties of general manifold-valued data
  subdivision schemes.
\newblock \emph{Multiscale Modeling $\&$ Simulation}, 7\penalty0 (3):\penalty0
  1073--1100, 2008.

\end{thebibliography}
}
\end{document}